\documentclass[a4paper]{amsart}

\pdfoutput=1

\title{Minor exclusion and planar rigidity}
\author{Joseph Paul MacManus}
\date{8th October 2026}

\subjclass[2020]{Primary 20F65; Secondary 51F30, 05C83, 05C10}
\keywords{Quasi-isometry, regular maps, minor exclusion, planar graphs, quasi-transitive graphs, accessibility}

\address{School of Mathematics,  University of Bristol, Bristol, BS8 1UG, UK, and the Heilbronn Institute for Mathematical Research, Bristol, UK}
\email{joseph.macmanus@bristol.ac.uk}

\usepackage{amsfonts}
\usepackage{amsmath}
\usepackage{amsthm}
\usepackage{amssymb}
\usepackage{stmaryrd}
\usepackage{mathrsfs}

\usepackage{graphicx}
\usepackage[dvipsnames]{xcolor}
\usepackage{tikz}
\usepackage{fancyref}
\usepackage{todonotes}

\usepackage{thmtools, thm-restate}

\usepackage[abbrev]{amsrefs}

\usepackage[colorlinks]{hyperref} 
\hypersetup{
    linkcolor=black,
    citecolor=blue,
}

\makeatletter
\newcommand{\myitem}[1]{%
\item[#1]\protected@edef\@currentlabel{#1}%
}
\makeatother

\newtheorem{theorem}{Theorem}[section]
\newtheorem{alphtheorem}{Theorem}

\newtheorem{alphcor}[alphtheorem]{Corollary}
\newtheorem*{theorem*}{Theorem}

\newtheorem{proposition}[theorem]{Proposition}
\newtheorem{lemma}[theorem]{Lemma}

\newtheorem{conjecture}[theorem]{Conjecture}

\theoremstyle{definition}
\newtheorem{definition}[theorem]{Definition}

\newtheorem{remark}[theorem]{Remark}

\begin{document}

\begin{abstract}
    We prove that every connected, locally finite, quasi-transitive graph which is quasi-isometric to a graph excluding a countable clique as a minor is quasi-isometric to a planar graph. Consequently, every finitely generated group with such a quasi-isometry is virtually planar, meaning that some finite-index subgroup admits a planar Cayley graph. 
    
    We also show that a connected, locally finite, quasi-transitive graph admitting a regular map into a bounded-degree graph excluding some finite minor admits a regular map into a bounded-degree planar graph. If the source is accessible, the target need only exclude a countable clique. Combined with work of Mackay, Spriano, and the present author, this shows that a finitely presented group admits a regular map into a bounded-degree graph excluding a countable clique if and only if it is virtually planar.
\end{abstract}

\maketitle




\section{Introduction}

It is known that a finitely generated group is \emph{virtually planar}---meaning that some finite-index subgroup admits a planar Cayley graph---as soon as its quasi-isometry class contains a planar graph \cite{macmanus2023accessibility}. This theorem belongs to a broader history of rigidity results for groups whose geometry can be modelled by planar spaces. A classical instance is the characterisation of virtual surface groups as precisely the finitely generated groups quasi-isometric to complete Riemannian planes. Originating in work of Mess, this characterisation was completed using the convergence group theorem of Tukia, Gabai, and Casson--Jungreis \cites{mess1988seifert,tukia1988homeomorphic,gabai1992convergence,casson1994convergence}. Alternative proofs, extensions, and related rigidity results appear in \cites{maillot2001quasi,bowditch2004planar,papasoglu2005quasi,macmanus2023accessibility,macmanus2024note,macmanus2025fat,macmanus2025vertex,mackay2026almost}. 
From a complementary, graph-theoretic direction, Davies \cite{davies2025string} and Georgakopoulos–Molinari \cite{georgakopoulos2026quasi} have recently established combinatorial conditions under which a graph is quasi-isometric to a planar graph. Together with \cite{macmanus2023accessibility}, these results provide further characterisations of virtually planar groups.

The purpose of this paper is to explore the link between this rigidity and minor exclusion. 
Within the realm of infinite graphs, the interaction between minor exclusion and transitivity was first studied by Thomassen \cite{Tho92}, who showed that every locally finite, one-ended, vertex-transitive graph is either planar or contains $K_{\omega}$ as a minor. Minor exclusion in Cayley graphs was previously studied by Ostrovskii–Rosenthal \cite{OR15} and Khukhro \cite{Khu23}, who proved that a finitely generated group is virtually free if and only if every one of its Cayley graphs excludes some finite minor.
More recently, Esperet, Giocanti, and Legrand-Duchesne proved a canonical structure theorem which applies to all locally finite, quasi-transitive graphs which exclude a countable clique \cite{esperet2024structure}. In particular, such a graph is quasi-isometric to a planar graph \cite{Esperet_Giocanti_2024}, and thus if some Cayley graph of a finitely generated group $G$ excludes a countable clique then $G$ is virtually planar. 

In the spirit of the above, we now state our main theorem, which asserts that it is enough for some arbitrary graph in the quasi-isometry class of our quasi-transitive input to exclude a countable clique---with no transitivity assumption on this graph---to force large-scale planarity.

\begin{restatable}{alphtheorem}{main}\label{thm:main}
    Suppose a connected, locally finite, quasi-transitive graph $\Gamma$
    is quasi-isometric to a graph which excludes a countable clique.
    Then $\Gamma$ is quasi-isometric to a planar graph.
\end{restatable}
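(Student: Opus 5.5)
The plan is to reduce to the one-ended case via accessibility-type decompositions, and handle that case with a Thomassen-style dichotomy. Fix a quasi-isometry $f\colon \Gamma \to X$ with $X$ excluding $K_\omega$ as a minor.

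\textbf{Step 1 (transfer of minors).} If $\Gamma$ contains a ``fat'' $K_\omega$ minor, then so does $X$. Here fat means that the branch sets are connected subgraphs at pairwise distance at least $K$ (for $K$ large relative to the quasi-isometry constants), and the connecting paths are far from the other branch sets. The transfer is done by coarsening: replace each branch set by the connected $R$-neighbourhood of its image in $X$, and each path by a concatenation of geodesics between images of consecutive vertices. The fatness ensures the images stay disjoint. Consequently $\Gamma$ has no $K$-fat $K_\omega$ minor for suitable $K$.

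\textbf{Step 2 (accessibility).} Next I show $\Gamma$ is accessible, i.e.\ there is a uniform $r$ such that any two ends are separated by a cut of diameter at most $r$. The approach is by contradiction: if $\Gamma$ is inaccessible, quasi-transitivity lets me find arbitrarily many nested separations of increasing width. From these I build a fat $K_\omega$ minor, with branch sets threaded through nested thick cuts, contradicting Step 1. This is the step I expect to be the main obstacle. A cleaner route would be a coarse analogue of the Esperet--Giocanti--Legrand-Duchesne structure, or an argument via the Dunwoody--Hamann tree decomposition. Given accessibility, $\Gamma$ is quasi-isometric to a tree of one-ended (or finite) quasi-transitive pieces. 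These pieces are, up to quasi-isometry, the parts of a canonical tree decomposition with bounded adhesion. By the combination results of \cite{macmanus2023accessibility}, it then suffices to show that each one-ended piece is quasi-isometric to a planar graph. Gluing planar pieces along bounded adhesion in a tree-like fashion preserves quasi-planarity, as in that paper.

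\textbf{Step 3 (one-ended case).} Let $\Gamma$ be one-ended and quasi-transitive. I aim for a coarse Thomassen dichotomy: either $\Gamma$ is quasi-isometric to a planar graph, or $\Gamma$ contains a $K$-fat $K_\omega$ minor for every $K$. For the proof, pass to a thickening $\Gamma_R$ (connect vertices at distance at most $R$). If some cycle-space or fat-$K_{3,3}$ obstruction persists at all scales, use quasi-transitivity to translate copies of a fat $K_{3,3}$ or $K_5$ around. One-endedness then lets me connect these copies far apart, and iterating gives fat $K_n$ for all $n$ and, by a compactness/diagonal argument along the end, a fat $K_\omega$. Otherwise, $\Gamma$ coarsely excludes $K_{3,3}$ and $K_5$ at some scale. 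Here I would invoke the known characterisations of \cite{davies2025string} and \cite{georgakopoulos2026quasi}, namely that graphs with no fat $K_5$/$K_{3,3}$-type minors (for quasi-transitive graphs, as shown there) are quasi-isometric to planar graphs. Combined with Step 1, this finishes the proof of Theorem~\ref{thm:main}.
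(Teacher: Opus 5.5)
Your argument has a genuine gap. Step~1 is fine, but it discards the hypothesis. From then on you are trying to prove a stronger statement: that a quasi-transitive graph with no $K$-fat $K_\omega$ minor is quasi-planar. The converse of Step~1 is an instance of the Georgakopoulos--Papasoglu fat minor conjecture, which is known to fail for some finite minors, so this is not a harmless reformulation. Neither the paper nor your sketch establishes that stronger statement, and both steps carrying its weight are asserted rather than proved.

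In Step~2, ends that need many vertices to separate them do not by themselves produce clique minors, fat or otherwise. For example, attach infinitely many planar grid strips $\mathbb{Z}\times[0,n]$ of growing width to a ray: this is a planar graph whose pairs of ends need arbitrarily many separating vertices. So you would need a real mechanism, exploiting quasi-transitivity, that produces infinitely many branch sets pairwise joined by paths staying $K$-far from all other branch sets and paths. ``Threading through nested thick cuts'' is not such a mechanism.

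In Step~3, translating copies of a fat $K_5$ or $K_{3,3}$ and linking them through the end does not give $K_n$ minors. Joining copies by paths does not by itself make $n$ branch sets pairwise adjacent; even Thomassen's non-coarse dichotomy needs a delicate argument. The complementary branch rests on a coarse Kuratowski theorem for quasi-transitive graphs whose availability you only assert; the paper does not use the cited works in that way. So the coarse Thomassen dichotomy, and with it the whole proof, is unsupported.

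The paper never passes to fat minors; it keeps the genuine $K_\omega$-minor-free target and uses its structure.
\begin{itemize}
\item After replacing the target by a quasi-isometric bounded-degree subgraph $H$, Theorem~\ref{thm:diestel-thomas} gives a finite-adhesion tree-decomposition of $H$ whose torsos are plane graphs with at most one vortex.
\item A thick image end is carried by some node $t$. Since the torso at $t$ excludes some $K_r$, every adhesion set incident to $t$ has fewer than $r$ vertices.
\item When all image ends are carried by $t$, property $(\ddagger)$ lets the image be replaced, up to quasi-isometry, by a bounded-degree subgraph of that torso.
\item The vortex is then escaped in one of two ways. If the vortex is coarsely dense, Theorem~\ref{thm:grohe-local-tree-width} shows the image has finite tree-width, so $\Gamma$ has only thin ends and is a quasi-tree. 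Otherwise, a pointed limit at basepoints far from the vortices, the deleted set and a finite planarising set yields a planar graph quasi-isometric to $\Gamma$.
\item Accessibility is proved by contradiction rather than by building minors. Apply Proposition~\ref{prop:highly-connected-parts} with threshold above the pulled-back separator bound. The part carrying an ultra-thick end then has all its ends sent to ends carried by a single node, so it is quasi-planar yet inaccessible, contradicting Theorem~\ref{thm:quasi-planar-accessible}.
\end{itemize}
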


We remark that one can further upgrade
the planar graph in Theorem~\ref{thm:main} to be a planar Cayley graph;
see \cite{macmanus2024note} for details. 
If we combine Theorem~\ref{thm:main} with
\cite[Cor.~D]{macmanus2023accessibility}, we deduce the following corollary.

\begin{alphcor}\label{cor:planar-rigidity}
    Suppose a finitely generated group $G$ is quasi-isometric to a graph
    which excludes a countable clique. Then $G$ is virtually planar.
\end{alphcor}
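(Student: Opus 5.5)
The corollary should follow formally from Theorem~\ref{thm:main} together with the quasi-isometric rigidity theorem of \cite{macmanus2023accessibility}, so the plan is a short chain of reductions.

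First, fix a finite generating set $S$ of $G$ and let $\Gamma = \mathrm{Cay}(G,S)$. This graph is connected and locally finite, since $S$ is finite. It is vertex-transitive, because $G$ acts on it by left multiplication, and so in particular it is quasi-transitive. The hypothesis says $G$ is quasi-isometric to a graph $X$ excluding a countable clique. Since the quasi-isometry class of $G$ is by definition that of any of its Cayley graphs with respect to finite generating sets, $\Gamma$ is quasi-isometric to $X$.

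Second, apply Theorem~\ref{thm:main} to $\Gamma$. This shows that $\Gamma$, and hence $G$, is quasi-isometric to some planar graph $P$.

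Third, invoke \cite[Cor.~D]{macmanus2023accessibility}, which states that a finitely generated group quasi-isometric to a planar graph is virtually planar. This gives a finite-index subgroup of $G$ with a planar Cayley graph, as required. If one wants to avoid relying on the precise form of that corollary, one could instead note, as remarked after Theorem~\ref{thm:main}, that $P$ can be upgraded to a planar Cayley graph via \cite{macmanus2024note}, and then apply the rigidity statement in that form.

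There is essentially no obstacle here; all the difficulty is contained in Theorem~\ref{thm:main}. The only point needing care is checking that the hypotheses of \cite[Cor.~D]{macmanus2023accessibility} match exactly. In particular, no local finiteness or bounded degree assumption on the planar target $P$ should be required, or, if it is required, Theorem~\ref{thm:main} should produce a locally finite $P$. I would verify this by checking the statement of \cite[Cor.~D]{macmanus2023accessibility}. If bounded degree were needed, I would fall back on the Cayley-graph upgrade from \cite{macmanus2024note}, since a planar Cayley graph is automatically locally finite.
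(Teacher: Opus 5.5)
Your proposal is correct and is exactly the paper's argument: apply Theorem~\ref{thm:main} to a Cayley graph of $G$ with respect to a finite generating set, which is connected, locally finite and vertex-transitive, and then invoke \cite[Cor.~D]{macmanus2023accessibility}. Your caveat about the planar target is harmless, since the paper quotes that result for an arbitrary planar graph in the quasi-isometry class, and in any case the planar graphs produced in the proof of Theorem~\ref{thm:main} have bounded degree.
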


This directly improves upon \cite[Cor.~B]{macmanus2025fat}, which asserts that a finitely presented group which is quasi-isometric to a graph excluding some finite minor is virtually planar.

Our methods actually apply much more generally, allowing us to work within the paradigm of \emph{regular maps} rather than quasi-isometries. Recall that a map between the vertex sets of connected, bounded-degree graphs is \emph{regular} if it is Lipschitz and bounded-to-one. Introduced by Benjamini--Schramm in \cite{benjamini1996harmonic},
these maps generalise the notion of a coarse embedding, in essence by allowing arbitrarily large `folds'.
Many large-scale invariants, such as asymptotic dimension and separation profile, are monotone under regular maps.

Benjamini and Schramm define an \emph{almost planar graph} to mean a bounded-degree graph which admits a regular map to some bounded-degree planar graph. Equivalently, an almost planar graph is one which can be drawn in the plane such that every edge crosses a bounded number of other edges \cite{benjamini1996harmonic}. 
Applying the techniques used to prove Theorem~\ref{thm:main} to regular maps, we prove the following characterisation of almost planar quasi-transitive graphs.

\begin{restatable}{alphtheorem}{regfinite}\label{thm:reg-finite}
    Suppose a connected, locally finite, quasi-transitive graph $\Gamma$
    admits a regular map to a bounded-degree graph which excludes some finite minor. Then $\Gamma$ is almost planar. 
\end{restatable}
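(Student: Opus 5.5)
The strategy is to reduce to the case of one-ended pieces using accessibility, and then to use a Thomassen-type dichotomy: a one-ended quasi-transitive piece is either coarsely planar or forces arbitrarily large clique minors in the target. Since $\Gamma$ is merely locally finite and quasi-transitive, it need not be accessible. The first step is therefore to show that admitting a regular map into a bounded-degree graph excluding $K_t$ already forces accessibility. Concretely, if $\Gamma$ were inaccessible, it would have tight cuts separating infinitely many ends at arbitrarily large scales. Pushing these forward by a regular map $f$, I would try to show that a $K_t$-minor-free bounded-degree target has \emph{uniformly bounded-width} separations at all scales; this is the coarse analogue of bounded treewidth relative to the scale, via separator theorems of Alon--Seymour--Thomas type for minor-free graphs. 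Combined with bounded-to-one-ness, this should let me apply a Dunwoody-style tree-of-cuts argument. From it I would obtain a canonical tree decomposition of $\Gamma$ with finite adhesion in which every torso is either finite or one-ended and quasi-transitive under its stabiliser.

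Second, I would handle each one-ended torso $T$. Restricting $f$ to $T$ remains regular after replacing $T$ by a quasi-isometric locally finite graph built from the torso with bounded adhesion. The key claim is that a one-ended, locally finite, quasi-transitive graph admitting a regular map into a $K_t$-minor-free bounded-degree graph is quasi-isometric to a planar graph. I would argue by contradiction, using the dichotomy underlying \cite{Esperet_Giocanti_2024} and Thomassen \cite{Tho92}. If $T$ is not quasi-planar, then for every $n$ one finds \emph{fat} $K_n$ minors in $T$: the branch sets and paths are pairwise far apart relative to any prescribed constant. This is the coarse analogue of Thomassen's theorem, and I expect it to be the same engine used for Theorem~\ref{thm:main}. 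A regular map with Lipschitz constant $L$ and fibres of size at most $N$ sends a sufficiently fat $K_n$ minor to a configuration in the target. After discarding collisions, this configuration contains a genuine $K_m$ minor with $m\to\infty$ as $n\to\infty$, because each target vertex meets the images of only boundedly many branch sets. For large $n$ this contradicts the exclusion of $K_t$.

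Third, I would assemble the pieces. Each one-ended torso is quasi-isometric to a planar graph, hence almost planar, and the finite torsos are trivially so. A tree of almost planar graphs glued along bounded adhesion sets, with uniform constants guaranteed by quasi-transitivity, should again be almost planar. I would draw the pieces in discs attached along a planar tree-like layout, so that edges crossing the gluings cross only boundedly many others. This uses the Benjamini--Schramm characterisation of almost planarity via drawings with bounded crossings per edge.

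The main obstacle is the second step: extracting an honest $K_m$ minor in the target from the image of a fat minor under a map that may fold arbitrarily. My first attempt would be a Ramsey/greedy selection. Choose a sub-collection of branch sets whose images are pairwise disjoint, which is possible because each image vertex is hit by at most $N$ fibres and the fatness can be taken relative to $L$ and $N$. Then reroute the connecting paths through images of paths of the fat minor, shrinking $m$ by a factor depending only on $L$ and $N$. The accessibility step is a secondary difficulty.
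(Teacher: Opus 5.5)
Your proposal has two genuine gaps. A quick check shows something must break: take $t=5$ and a planar target, and use the quasi-isometric version of your assembly step. Your three steps would then show that every almost planar quasi-transitive graph is quasi-planar. That is the $k$-planar conjecture, which the paper records as open.

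\textbf{Step~1.} You claim that a regular map into a bounded-degree $K_t$-minor-free graph forces accessibility. Every almost planar graph maps regularly into a bounded-degree planar graph (Theorem~\ref{thm:almost-planar-regular}). So your claim is exactly the almost planar analogue of Theorem~\ref{thm:quasi-planar-accessible}, which the paper states is currently unknown. Your heuristic does not supply it: Alon--Seymour--Thomas separators are of order $\sqrt{|V|}$, not bounded, and already the planar grid has no uniformly bounded separations at large scales.

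The paper sidesteps accessibility entirely. The excluded finite minor gives, via Theorem~\ref{thm:finite-clique-structure}, a decomposition of the target whose adhesion is bounded by a uniform $a$. With $b=\kappa v_H(\kappa)a$, Proposition~\ref{prop:highly-connected-parts}, which holds for every quasi-transitive graph, cuts $\Gamma$ into quasi-transitive parts $P$ with $\operatorname{vs}(P)>b$. Such a part may still be inaccessible. However, no two of its ends can be separated by the pullback $f^{-1}(B_H(A,\kappa))$ of a target adhesion set $A$, since that pullback has at most $b$ vertices. Hence all image ends of $P$ are carried by a single node, and Proposition~\ref{prop:end-localisation} applies.

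\textbf{Step~2.} This fails exactly at the obstacle you flag: regular maps do not carry fat minors to minors. Take any $R$-fat bounded-degree model of $K_n$ and draw it in the plane. Subdivide so that every edge crosses at most one other edge, and map it by the identity into its planarisation, where crossings become degree-$4$ vertices. This map is injective and $2$-Lipschitz into a bounded-degree planar graph, for arbitrary $n$ and $R$. Bounded fibres only prevent many branch sets from passing through one vertex. They do not prevent images of connecting paths from meeting at crossing vertices, and since every drawing of $K_5$ has a crossing, no Ramsey selection recovers even a $K_5$ minor. Separately, the coarse dichotomy ``not quasi-planar implies fat $K_n$ minors for every $n$'' is unproved in your sketch, and it is not what drives Theorem~\ref{thm:main}.

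The paper aims only at almost planarity of each part. Lemma~\ref{lem:torso-localisation} together with $(\ddagger)$ makes the image of $P$ quasi-isometric to a bounded-degree subgraph of a near-embedded torso. Theorem~\ref{thm:escaping-vortex} then splits into two cases. If the vortex vertices are coarsely dense in the image, Grohe's linear local tree-width gives finite tree-width, so $P$ has no thick end and is a quasi-tree. Otherwise, a pointed limit at basepoints far from the vortices, the deleted set and a finite planarising set yields a regular map from $P$ into a planar graph. Your Step~3 is fine and corresponds to Lemma~\ref{lem:planar-assembly}.
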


The reason we require the target graph in Theorem~\ref{thm:reg-finite} to exclude some \emph{finite} minor, rather than a countable clique, comes down to accessibility. More precisely, the issue is that there is currently no known analogue of \cite[Thm.~A]{macmanus2023accessibility} for almost planar graphs. 
If we include accessibility in our assumptions, we can prove the following stronger result.

\begin{restatable}{alphtheorem}{regctble}\label{thm:reg-ctble}
    Suppose a connected, locally finite, quasi-transitive, accessible graph $\Gamma$
    admits a regular map to a bounded-degree graph which excludes a countable clique. Then $\Gamma$ is almost planar. 
\end{restatable}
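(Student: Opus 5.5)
We reduce the accessible, regular-map setting to one-ended pieces, handle those pieces with the same one-ended argument used for Theorem~\ref{thm:reg-finite}, and then reassemble. Throughout, let $f\colon V\Gamma\to V\Lambda$ be the given regular map, where $\Lambda$ has bounded degree and excludes $K_\omega$ as a minor.

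\textbf{Step 1: decompose $\Gamma$.} Since $\Gamma$ is accessible and quasi-transitive, it admits a canonical, $\mathrm{Aut}(\Gamma)$-invariant tree decomposition of finite adhesion in which every torso is quasi-transitive and has at most one end. Each torso $T$ is quasi-isometric to a subgraph of $\Gamma$, obtained by adding bounded-length paths through the finite adhesion sets. Hence $f$ restricted to such a subgraph, precomposed with the quasi-isometry, is again a regular map into $\Lambda$, so each torso inherits the hypothesis.

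\textbf{Step 2: the one-ended torsos.} This is the key step. Suppose a one-ended torso $T$ is not almost planar. We aim to build a $K_\omega$ minor in $\Lambda$, contradicting the hypothesis.
\begin{itemize}
\item First, use Thomassen-type arguments in the quasi-transitive, one-ended setting, as in the proof of Theorem~\ref{thm:reg-finite}. These produce, from failure of almost planarity, a \emph{fat} $K_n$ minor in $T$ for every $n$: branch sets and paths that are pairwise $r$-far apart, for any prescribed $r$.
\item Next, use quasi-transitivity to place infinitely many such configurations disjointly, and join them along an infinite ray. This yields a fat $K_\omega$ minor in $T$.
\item Finally, push this minor forward under $f$. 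Because $f$ is Lipschitz, paths map to paths. Because $f$ is bounded-to-one and the minor is sufficiently fat, the images of distinct branch sets and paths can be arranged to stay disjoint in $\Lambda$.
\end{itemize}
The main obstacle is this last point. Regular maps may fold, and bounded-to-one does not by itself prevent the images of far-apart sets from colliding. I would first try a pigeonhole or Ramsey-type selection: since each vertex of $\Lambda$ has boundedly many preimages, among infinitely many candidate branch sets one can pass to a subfamily with pairwise disjoint images, and then reroute the connecting paths using the connectivity of the fattened sets. The essential difference from Theorem~\ref{thm:reg-finite} is that excluding a \emph{finite} minor would only require finitely many disjoint images, whereas here we must produce an infinite family with disjoint images and build the countable clique in $\Lambda$ directly.

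\textbf{Step 3: reassemble.} Each torso is now either finite or one-ended and almost planar. An almost planar drawing, in which every edge crosses boundedly many others, can be amalgamated along finite adhesion sets following the tree structure. Glue the drawings of the torsos along the tree, placing the finite adhesion sets in small discs. Quasi-transitivity gives uniform bounds on the adhesion sizes and on the crossing numbers of the finitely many torso types, so the glued drawing has every edge crossing a bounded number of others. This gives a drawing of $\Gamma$ in the plane with boundedly many crossings per edge, so $\Gamma$ is almost planar.
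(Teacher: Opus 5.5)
The gap is in Step~2, and it cannot be repaired by a selection argument. (Your Steps~1 and~3 follow the paper's outer structure. The paper does the reassembly more cleanly: as in Lemma~\ref{lem:planar-assembly}, it glues regular maps into planar graphs along bridges, using that each vertex lies in boundedly many bags, rather than amalgamating drawings.)

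\textbf{Why the pushforward fails.} Regular maps do not carry fat minors to minors, however fat. Subdivide every edge of $K_5$ into a path of length $L$. Draw the result in the plane so that its single crossing sits at an interior vertex of each of the two crossing paths, and identify those two vertices. The quotient map is $1$-Lipschitz and at most $2$-to-$1$, so it is $2$-regular. Its image is a planar bounded-degree graph and has no $K_5$ minor, yet the source has $K$-fat $K_5$ minors with $K$ of order $L$.

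The same trick works for $K_\omega$. Take a locally finite model of $K_\omega$ in the plane: branch sets are radial rays, and the path joining the $i$th and $j$th rays runs along its own circle, with radii tending to infinity. Identifying each ray--arc crossing gives a bounded-degree graph with a fat $K_\omega$ minor and a $2$-regular map into a planar graph.

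In these examples the collisions happen between long connecting paths and are forced by the planarity of the target. Bounded fibres do nothing to stop them, and neither does a Ramsey selection of branch sets. Your pushforward uses only fatness and regularity, and that is not enough.

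\textbf{Problems with the rest of Step~2.} Your first bullet is not a result you can quote in this generality. Thomassen's theorem concerns ordinary minors, and nothing of the kind appears in the paper's proof of Theorem~\ref{thm:reg-finite}. The examples above also show that fat clique minors are compatible with almost planarity, so they are the wrong obstruction to transport through a regular map. Finally, the real difference between Theorems~\ref{thm:reg-finite} and~\ref{thm:reg-ctble} is not finitely versus infinitely many disjoint images. Excluding only $K_\omega$ gives merely finite adhesion in Theorem~\ref{thm:diestel-thomas}, and that is why accessibility is needed to reduce to one-ended pieces.

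\textbf{The missing idea.} You need to pull structure back from the target instead of pushing minors forward. For a one-ended part $P$, the paper (Theorem~\ref{thm:one-ended-rigidity}) argues as follows.
\begin{enumerate}
\item Decompose the target by Theorem~\ref{thm:diestel-thomas}. The image of the unique, thick end of $P$ is thick and is carried by some node $t$, by Lemmas~\ref{lem:regular-map-on-ends} and~\ref{lem:thick-end-carried}.
\item The adhesion sets at $t$ are cliques in a torso that excludes a finite clique, so they are uniformly bounded.
\item Property $(\ddagger)$, via Lemma~\ref{lem:torso-localisation} and Proposition~\ref{prop:end-localisation}, makes the image of $P$ quasi-isometric to a bounded-degree subgraph of that single torso.
\item Theorem~\ref{thm:escaping-vortex} finishes the argument in two cases. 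If the vortices are coarsely dense in the image, Theorem~\ref{thm:grohe-local-tree-width} forces finite tree-width, so $P$ would have no thick end. Otherwise, take pointed limits at basepoints escaping the vortices, the deleted set and a finite planarising set. This gives a regular map from $P$ into a planar graph, and Theorem~\ref{thm:almost-planar-regular} applies.
\end{enumerate}
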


The \emph{$k$-planar conjecture}, which was first posed by Georgakopoulos--Papasoglu \cite[Prob.~9.4]{georgakopoulos2025graph}, asserts that every almost planar, locally finite, quasi-transitive graph is quasi-isometric to a planar graph. 
This conjecture was recently settled for `coarsely simply connected' graphs by Mackay, Spriano, and the present author \cite{mackay2026almost}, though it remains open in full generality. In particular, it is known that almost planar finitely presented groups are virtually planar. Since finitely presented groups are accessible \cite{dunwoody1985accessibility}, our Theorem~\ref{thm:reg-ctble} yields the following strong geometric characterisation of virtual planarity amongst finitely presented groups.

\begin{alphcor}\label{cor:planar-rigidity-regularmaps}
     A finitely presented group admits a regular map to a bounded-degree graph which excludes a countable clique if and only if it is virtually planar. 
\end{alphcor}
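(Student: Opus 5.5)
The plan is to prove Corollary~\ref{cor:planar-rigidity-regularmaps} by establishing its two directions separately, using Theorem~\ref{thm:reg-ctble} together with the cited results on almost planar and virtually planar groups.

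\emph{Reverse direction.} Suppose $G$ is finitely presented and virtually planar. Then some finite-index subgroup $H\le G$ has a planar Cayley graph $\Lambda$.
\begin{enumerate}
    \item The inclusion $H\hookrightarrow G$ is a quasi-isometry, so $G$ is quasi-isometric to $\Lambda$.
    \item Since $\Lambda$ is a Cayley graph, it is locally finite and has bounded degree.
    \item $\Lambda$ is planar, so it excludes $K_5$ as a minor, and in particular excludes $K_\omega$.
\end{enumerate}
Now fix a Cayley graph of $G$ and a quasi-isometry $f$ from it to $\Lambda$. The map $f$ is coarsely Lipschitz, so it is Lipschitz on vertex sets after adjusting constants. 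It is also bounded-to-one: points with the same image lie at uniformly bounded distance, and balls of bounded radius have uniformly bounded cardinality. Hence $f$ is a regular map to a bounded-degree graph excluding a countable clique. This direction uses no finite presentability.

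\emph{Forward direction.} Suppose $G$ is finitely presented and admits a regular map from a Cayley graph $\Gamma$ to a bounded-degree graph excluding a countable clique.
\begin{enumerate}
    \item Regularity does not depend on the choice of finite generating set. Changing generators is a bi-Lipschitz bijection on vertices, and composing it with a regular map gives a regular map.
    \item $\Gamma$ is connected, locally finite, and vertex-transitive, hence quasi-transitive.
    \item By Dunwoody~\cite{dunwoody1985accessibility}, finitely presented groups are accessible. Accessibility of $G$ is a property of its Cayley graphs, so $\Gamma$ is accessible in the graph-theoretic sense used in Theorem~\ref{thm:reg-ctble}.
    \item Theorem~\ref{thm:reg-ctble} then shows that $\Gamma$ is almost planar, meaning it admits a regular map to a bounded-degree planar graph.
    \item By the result of Mackay, Spriano, and the present author~\cite{mackay2026almost}, almost planar finitely presented groups are virtually planar. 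Hence $G$ is virtually planar.
\end{enumerate}

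The main point needing care is step 3 of the forward direction. The hypothesis of Theorem~\ref{thm:reg-ctble} refers to accessibility of the graph, while Dunwoody's theorem concerns the group. One must check that Dunwoody's theorem indeed gives accessibility of the Cayley graph, for instance via Thomassen--Woess, where accessibility of a quasi-transitive graph is defined by uniformly separating ends with finite cuts.

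It is also worth confirming that \cite{mackay2026almost} applies directly to finitely presented groups. Finitely presented groups have coarsely simply connected Cayley graphs, so the $k$-planar conjecture is known in this setting, and the conclusion is stated for groups as virtual planarity.
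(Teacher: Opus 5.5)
Your proposal is correct and matches the paper's argument. For the forward direction, you use Dunwoody's accessibility theorem (transferred to the Cayley graph via Thomassen--Woess), then Theorem~\ref{thm:reg-ctble}, then the result of \cite{mackay2026almost} that almost planar finitely presented groups are virtually planar; if that result is read as only giving quasi-planarity in the coarsely simply connected case, \cite[Cor.~D]{macmanus2023accessibility} supplies the upgrade to virtual planarity. For the reverse direction, it is exactly as you say: a quasi-isometry to a planar Cayley graph of a finite-index subgroup is a regular map into a bounded-degree graph excluding $K_5$, and hence excluding $K_\omega$.
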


More generally, our results show that the $k$-planar conjecture is equivalent to the following, \emph{a priori} stronger, conjecture.

\begin{conjecture}\label{conj:k-planar-strong}
    Every connected, locally finite, quasi-transitive graph admitting a regular map into a bounded-degree graph excluding a countable clique is quasi-isometric to a planar graph.
\end{conjecture}

Indeed, assume the $k$-planar conjecture, and let $\Gamma$ satisfy the hypotheses of Conjecture~\ref{conj:k-planar-strong}. If $\Gamma$ were inaccessible, Lemma~\ref{prop:inaccessible-almost-planar-part} would produce an inaccessible, almost planar, locally finite, quasi-transitive graph $P$. The $k$-planar conjecture would make $P$ quasi-isometric to a planar graph, contradicting \cite[Thm.~A]{macmanus2023accessibility}. Thus $\Gamma$ is accessible. Theorem~\ref{thm:reg-ctble} makes $\Gamma$ almost planar, and the $k$-planar conjecture then gives quasi-planarity. Conversely, every almost planar, connected, locally finite, quasi-transitive graph admits a regular map into a bounded-degree planar graph by definition, so Conjecture~\ref{conj:k-planar-strong} implies the $k$-planar conjecture.

\subsection{Outline}

We briefly outline the proof of Theorem~\ref{thm:main}. The starting point is the structure theorem of Diestel--Thomas \cite{diestel1999excluding}, which decomposes the target as a tree of torsos, each planar apart from a finite deleted set and at most one `vortex'. Our first step is to show that a locally finite, quasi-transitive graph which is quasi-isometric to a connected bounded-degree subgraph of such a torso is quasi-isometric to a planar graph. In other words, we can `escape the vortex'; see Theorem~\ref{thm:escaping-vortex} below. This combines Grohe's theorem on local tree-width \cite{Grohe03} with a pointed-limit argument.

The principal difficulty is then that our quasi-isometry need not interact in any controlled way with the Diestel--Thomas decomposition. In the one-ended case, we show that the geometry of our graph localises to a single torso. The preceding result then settles this case; see Theorem~\ref{thm:one-ended-rigidity} below.

The remaining issue is accessibility. If our quasi-transitive graph $\Gamma$ were inaccessible, the Dicks--Dunwoody theory of cuts \cite{dicks1989groups} would allow us to find a quasi-transitive part $P \subseteq \Gamma$ which retains an ultra-thick end and whose distinct ends require many vertices to separate them. Choosing this separation threshold sufficiently large forces the geometry of $P$ to localise to a single target torso, making $P$ quasi-isometric to a planar graph. However, its ultra-thick end makes $P$ inaccessible, contradicting \cite[Thm.~A]{macmanus2023accessibility}. Thus $\Gamma$ is accessible, and we conclude by applying the one-ended case and assembling the resulting planar graphs.

To prove Theorems~\ref{thm:reg-finite} and \ref{thm:reg-ctble}, the key observation is that the same localisation and limiting arguments also apply to regular maps. The proofs of all three theorems therefore essentially proceed in parallel.

\subsection*{Acknowledgements}

We thank Louis Esperet, Ugo Giocanti, Bobby Miraftab, and Panos Papasoglu for comments on an earlier draft of this paper. 
This work was supported by the Additional Funding Programme for Mathematical Sciences, delivered by EPSRC (EP/V521917/1) and the Heilbronn Institute for Mathematical Research.


\section{Preliminaries}\label{sec:preliminaries}

We collect the facts about coarse geometry and structural graph theory which will be used throughout this paper.

\subsection{Graphs}

All graphs are simple and undirected, and may be infinite.
Distances in a connected graph are measured in its path metric, and all neighbourhoods are closed neighbourhoods of vertex sets.
A graph is \emph{quasi-transitive} if its automorphism group has finitely many vertex orbits; in particular, a locally finite quasi-transitive graph has bounded degree.
We write $F\prec\Gamma$ when $F$ is a minor of $\Gamma$, and say that $\Gamma$ \emph{excludes} $F$ when $F\nprec\Gamma$.
We denote the finite clique of order $n$ by $K_n$ and the countably infinite clique by $K_\omega$. Given $S \subset V(\Gamma)$, we write $\Gamma[S]$ for the subgraph induced by $S$. We will usually abuse notation and write $f:\Gamma\to\Lambda$ for a map between vertex sets.

If $\Gamma$ has bounded degree, we let $v_\Gamma(r)$ denote the maximum cardinality of a ball of radius $r$ in $\Gamma$.

\subsection{Decompositions}\label{subsec:decompositions}

Let $\Gamma$ be a graph.
A \emph{tree-decomposition} of $\Gamma$ is a pair $(T,\beta)$, where $T$ is a tree and $\beta:V(T)\to\mathcal{P}(V(\Gamma))$ satisfies the following conditions:
\begin{enumerate}
\item[(T1)] We have $V(\Gamma)=\bigcup_{t\in V(T)}\beta(t)$.
\item[(T2)] For every edge $xy\in E(\Gamma)$, there exists $t\in V(T)$ such that $x,y\in\beta(t)$.
\item[(T3)] For every $x\in V(\Gamma)$, the set $\{t\in V(T):x\in\beta(t)\}$ induces a connected subtree of $T$.
\end{enumerate}
The sets $\beta(t)$ are called the \emph{bags} of the decomposition, and the subgraphs $\Gamma[\beta(t)]$ induced by the bags are called \emph{parts}.
Its \emph{width} is $\operatorname{width}(T,\beta):=\sup_{t\in V(T)}|\beta(t)|-1$.
The \emph{tree-width} of $\Gamma$, denoted $\operatorname{tw}(\Gamma)$, is the infimum of these widths over all tree-decompositions of $\Gamma$.
A tree-decomposition whose underlying tree is a path is called a \emph{path-decomposition}.
For every edge $st\in E(T)$, the set $\beta(s)\cap\beta(t)$ is called an \emph{adhesion set}.
The decomposition has \emph{bounded adhesion} if the cardinalities of its adhesion sets are finite and uniformly bounded.
We will also use the following weaker notion.

\begin{definition}[Finite adhesion]\label{def:finite-adhesion}
A tree-decomposition $(T,\beta)$ has \emph{finite adhesion} if every adhesion set is finite and, for every ray $t_1,t_2,\ldots$ in $T$, we have $\liminf_{i\to\infty}|\beta(t_i)\cap\beta(t_{i+1})|<\infty$.
\end{definition}

For $t\in V(T)$, the \emph{torso} at $t$ is the graph with vertex set $\beta(t)$ obtained from $\Gamma[\beta(t)]$ by making $\beta(t)\cap\beta(s)$ a clique for every neighbour $s$ of $t$.
We denote this graph by $\Gamma[\![\beta(t)]\!]$.
Every edge $st\in E(T)$ induces a separation of $\Gamma$.
The two components of $T-st$ determine two unions of bags whose intersection is $\beta(s)\cap\beta(t)$.
We call these unions the \emph{sides} of the separation, and call the sets obtained by removing the adhesion set its \emph{strict sides}.
If a group $A$ acts on $\Gamma$ and on $T$ with $\beta(at)=a\beta(t)$ for all $a\in A$ and $t\in V(T)$, the decomposition is \emph{$A$-invariant}.
We write $A_t$ for the stabiliser of $t$ in this tree action.
A decomposition invariant under the full automorphism group is called \emph{canonical}.
We will also need the following generalisation of a path-decomposition.

\begin{definition}[Linear decomposition]\label{def:linear-decomposition}
A \emph{linear decomposition} of a graph $\Gamma$ is a pair $(L,\beta)$, where $(L,\leq)$ is a totally ordered set and $\beta:L\to\mathcal{P}(V(\Gamma))$ satisfies the following conditions:
\begin{enumerate}
\item[(L1)] We have $V(\Gamma)=\bigcup_{t\in L}\beta(t)$.
\item[(L2)] For every edge $xy\in E(\Gamma)$, there exists $t\in L$ such that $x,y\in\beta(t)$.
\item[(L3)] If $r\leq s\leq t$, then $\beta(r)\cap\beta(t)\subseteq\beta(s)$.
\end{enumerate}
The width of this decomposition is $\sup_{t\in L}|\beta(t)|-1$.
\end{definition}

\subsection{Coarse geometry}\label{subsec:coarse-geometry}

A map $f:\Gamma\to \Lambda$ between connected graphs is a \emph{$\lambda$-quasi-isometric embedding}, for $\lambda\geq1$, if
\begin{equation}\label{eq:quasi-isometry-inequalities}
\lambda^{-1}d_\Gamma(x,y)-\lambda\leq d_\Lambda(f(x),f(y))\leq\lambda d_\Gamma(x,y)+\lambda
\end{equation}
for all $x,y\in V(\Gamma)$.
We say $f$ is ($\lambda$-)\emph{coarsely surjective} if its image is \emph{$\lambda$-coarsely dense}, meaning that every vertex of $\Lambda$ lies within distance $\lambda$ of $f(V(\Gamma))$. If $f$ is a $\lambda$-coarsely surjective $\lambda$-quasi-isometric embedding,  then $f$ is called a \emph{$\lambda$-quasi-isometry}.
We write $\Gamma\simeq_{\mathrm{QI}}\Lambda$ when $\Gamma$ and $\Lambda$ are quasi-isometric.
A connected graph is \emph{quasi-planar} if it is quasi-isometric to a planar graph, and is a \emph{quasi-tree} if it is quasi-isometric to a tree.

\begin{definition}[Regular map]\label{def:regular-map}
For an integer $\kappa\geq1$, a map $f:\Gamma\to\Lambda$ between connected, bounded-degree\footnote{The convention that regular maps must be between bounded-degree graphs is not strictly necessary but is entirely typical, as removing this convention requires using a more nuanced definition of a regular map; see e.g. \cite[Sec.~5.1]{hume2020poincare}. Indeed, if we drop the bounded-degree hypothesis na\"ively, then all connected graphs map regularly to a star graph.} 
graphs is \emph{$\kappa$-regular} if $d_\Lambda(f(x),f(y))\leq\kappa d_\Gamma(x,y)$ for all $x,y\in V(\Gamma)$ and $|f^{-1}(z)|\leq\kappa$ for every $z\in V(\Lambda)$.
A map is \emph{regular} if it is $\kappa$-regular for some $\kappa$.
\end{definition}

Compositions of regular maps are regular, and every quasi-isometric embedding between bounded-degree graphs is regular.
We will frequently use the following observation to make a regular map coarsely surjective.

\begin{lemma}\label{lem:regular-connected-image}
Let $f:\Gamma\to\Pi$ be a $\kappa$-regular map between connected graphs, where $\Pi$ has bounded degree.
There is a connected subgraph $H\subseteq\Pi$ containing $f(V(\Gamma))$ such that $f:\Gamma\to H$ is $\kappa$-regular and coarsely surjective.
\end{lemma}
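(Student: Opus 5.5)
We take $H$ to be the union of $f(V(\Gamma))$ with geodesics in $\Pi$ joining the images of adjacent vertices. For each edge $xy\in E(\Gamma)$, the regularity bound gives $d_\Pi(f(x),f(y))\le\kappa$. We therefore fix a geodesic $\gamma_{xy}$ in $\Pi$ from $f(x)$ to $f(y)$, which has length at most $\kappa$, and let $H$ be the subgraph of $\Pi$ formed by the union of $f(V(\Gamma))$ and all the paths $\gamma_{xy}$.

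First we check that $H$ is connected. Any two vertices of $f(V(\Gamma))$ are joined in $H$: pick a path $x=x_0,\dots,x_n=y$ in $\Gamma$ and concatenate the paths $\gamma_{x_ix_{i+1}}$. Every other vertex of $H$ lies on some $\gamma_{xy}$, which contains $f(x)$. Hence $H$ is connected.

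Next we check that $H$ has bounded degree and that $f:\Gamma\to H$ is $\kappa$-regular. Bounded degree is immediate, since $H$ is a subgraph of $\Pi$. The fibre bound $|f^{-1}(z)|\le\kappa$ holds for every $z\in V(H)$, because it holds in $\Pi$. For the Lipschitz condition, take $x,y\in V(\Gamma)$ and a geodesic $x=x_0,\dots,x_n=y$ in $\Gamma$. Concatenating the paths $\gamma_{x_ix_{i+1}}$ gives a path in $H$ of length at most $\kappa n$. Therefore $d_H(f(x),f(y))\le\kappa d_\Gamma(x,y)$. Note that the distance must be measured in $H$, not in $\Pi$; this is exactly why $H$ is built from paths between images of adjacent vertices.

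Finally, coarse surjectivity. Each vertex $z$ of $H$ either lies in $f(V(\Gamma))$ or lies on some $\gamma_{xy}$. In the second case $z$ is within distance $\kappa/2\le\kappa$ of $f(x)$ or $f(y)$ along $\gamma_{xy}$, and this distance is realised inside $H$. So $f(V(\Gamma))$ is $\kappa$-coarsely dense in $H$.

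No step is a real obstacle. The only point needing care is the Lipschitz condition, which must use the intrinsic metric of $H$; the construction makes it hold automatically.
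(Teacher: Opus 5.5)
Your proposal is correct and matches the paper's proof: take $H$ to be the union of paths of length at most $\kappa$ (you use geodesics, the paper any such paths) joining the images of adjacent vertices, then read off connectivity, the Lipschitz bound in the intrinsic metric of $H$, the unchanged fibres, and $\kappa$-coarse density. Explicitly adding $f(V(\Gamma))$ to $H$ is a harmless refinement that also covers the degenerate case where $\Gamma$ has no edges.
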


\begin{proof}
For every edge $xy\in E(\Gamma)$, choose a path $P_{xy}$ in $\Pi$ of length at most $\kappa$ joining $f(x)$ to $f(y)$, allowing a constant path when these vertices coincide.
Let $H$ be the union of these paths.
Then $H$ is connected, $d_H(f(x),f(y))\leq \kappa d_\Gamma(x,y)$ for all $x,y\in V(\Gamma)$, and the fibres of $f$ are unchanged.
Every vertex of $H$ lies within distance $\kappa$ of the image, so $f:\Gamma\to H$ is coarsely surjective.
\end{proof}

A graph is \emph{$k$-planar} if it can be drawn in the plane so that every edge crosses at most $k$ other edges.
It is \emph{almost planar} if it is $k$-planar for some finite $k$.
The following characterisation is due to Benjamini--Schramm.

\begin{theorem}[{\cite[Thm.~1.8]{benjamini1996harmonic}}]\label{thm:almost-planar-regular}
A connected bounded-degree graph is almost planar if and only if it admits a regular map into a connected bounded-degree planar graph.
\end{theorem}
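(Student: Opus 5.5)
The plan is to prove the two implications separately. In each direction we pass between a drawing and a map by a local construction, and the degree bounds control the number of crossings or the size of the fibres.

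\emph{Almost planar $\Rightarrow$ regular map.} Suppose $\Gamma$ is connected, has maximum degree $d$, and has a drawing in which every edge crosses at most $k$ other edges. Perturb the drawing if necessary so that crossings are transverse, pairwise distinct, and each pair of edges crosses finitely often. Then \emph{planarise} it: declare every crossing point a new vertex and subdivide the edges accordingly. The result $P$ is a plane graph, hence planar. Its original vertices have degree at most $d$ and its crossing vertices have degree $4$, so $P$ has bounded degree. It is connected because $\Gamma$ is. Each edge of $\Gamma$ is subdivided into at most $k+1$ edges of $P$. Hence the inclusion $V(\Gamma)\hookrightarrow V(P)$ is $(k+1)$-Lipschitz and injective, so it is $(k+1)$-regular. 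One subtlety: an edge may cross the same edge several times. So either first redraw to remove repeated crossings between a pair, or note that the number of crossing \emph{points} on an edge is still bounded. If it is not, replace the Lipschitz bound by the observation that crossings with the same edge can be uncrossed locally.

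\emph{Regular map $\Rightarrow$ almost planar.} Let $f:\Gamma\to\Pi$ be $\kappa$-regular, with $\Pi$ connected, planar and of maximum degree $D$, and let $\Gamma$ have maximum degree $d$. Fix a planar drawing of $\Pi$. Around each vertex $z$ of $\Pi$ take a small closed disc $U_z$, and around each edge take a thin strip; these are pairwise disjoint apart from the obvious incidences.
\begin{enumerate}
\item Place the at most $\kappa$ vertices of $f^{-1}(z)$ at distinct points of $U_z$.
\item For each edge $xy$ of $\Gamma$, choose a path $P_{xy}$ in $\Pi$ of length at most $\kappa$ from $f(x)$ to $f(y)$. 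Draw $xy$ as a curve that follows $P_{xy}$ through the corresponding discs and strips. Inside each strip it runs as one of several parallel strands, so strands in a strip never cross. Inside each disc it is a straight chord between boundary points, or to a vertex point, in general position and avoiding the vertex points of $\Gamma$.
\end{enumerate}
Two such curves can then cross only inside a common disc $U_z$, and there at most once per visit.

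It remains to count crossings.
\begin{enumerate}
\item If the curve of an edge $xy$ enters $U_z$, then $d_\Pi(f(x),z)\le\kappa$.
\item So the number of edges of $\Gamma$ whose curves meet $U_z$ is at most $d\cdot\kappa\cdot|B_\Pi(z,\kappa)|\le d\kappa D^{\kappa+1}=:M$.
\item A given edge visits at most $\kappa+1$ discs.
\end{enumerate}
Hence it crosses at most $(\kappa+1)M$ other edges, and $\Gamma$ is $(\kappa+1)M$-planar.

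I expect the main obstacle to be the bookkeeping inside the discs $U_z$. A path $P_{xy}$ may revisit a vertex, and chords must avoid vertex points and meet transversally. I would handle this by first shortening every $P_{xy}$ to a geodesic, so that it visits each vertex once. Then I would choose the entry points on $\partial U_z$ for distinct strands to be distinct, and perturb chords into general position, which is possible because only finitely many chords meet each disc.
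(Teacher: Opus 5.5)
The paper does not prove this statement; it quotes it from Benjamini--Schramm, so the question is only whether your argument is complete. Your converse direction is correct. Routing each edge along a geodesic of length at most $\kappa$ through discs and parallel strands, with all crossings confined to the discs, gives a drawing in which every edge meets at most $(\kappa+1)M$ crossing \emph{points}. For infinite $\Pi$ you should fix an embedding in which every vertex and every edge has a neighbourhood meeting nothing else, so that the discs and strips can be chosen disjoint, but this is routine.

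The gap is in the forward direction, at the point you flag and then pass over. Planarisation yields a Lipschitz map only if every edge carries a uniformly bounded number of crossing points. The definition used here bounds only the number of \emph{distinct} edges an edge crosses, and nothing prevents two edges from crossing each other arbitrarily often. So your bound of $k+1$ on the length of a subdivided edge fails, and nothing else bounds $d_P(x,y)$. Your first repair, that ``the number of crossing points is still bounded'', is false under this hypothesis. Your second, ``uncross locally'', does not preserve the hypothesis. Let $p,q$ be consecutive crossings of $e$ with $f$ along $e$. The standard move removes them at no cost only when the lens bounded by $e[p,q]\cup f[p,q]$ meets no other part of the drawing. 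Otherwise it reroutes $e$ along a parallel copy of $f[p,q]$, or swaps the two segments, and the rerouted edge inherits every edge crossing $f(p,q)$, with multiplicity. The number of distinct edges crossed by $e$ can then exceed $k$, new repeated crossings appear, and in an infinite graph nothing evident forces the process to stop with a uniform bound. As written, you have proved the forward implication only for drawings in which each edge is crossed at most $k$ times counted with multiplicity, for instance drawings in which two edges cross at most once. To cover the statement as phrased, you must either adopt that convention explicitly, or prove a redrawing lemma bounding crossing multiplicity in terms of $k$ and the degree. With that convention your two directions already give the equivalence, since your converse produces such drawings. The redrawing lemma is a genuine problem, not a local perturbation.
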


Consequently, if a connected bounded-degree graph admits a regular map into an almost planar bounded-degree graph, then it is almost planar.
In particular, almost planarity is invariant under quasi-isometry among connected bounded-degree graphs.
The following property bounds the sizes of finite components cut off by small vertex sets.

\begin{definition}\label{def:ddagger}
A connected graph $\Gamma$ satisfies property $(\ddagger)$ if, for every $a\geq0$, there exists $m=m(a)$ such that every finite component of $\Gamma-S$ has at most $m$ vertices whenever $S\subseteq V(\Gamma)$ has cardinality at most $a$.
\end{definition}

\begin{lemma}[{\cite[Prop.~3.8]{mackay2026almost}}]\label{lem:quasi-transitive-ddagger}
Every infinite, connected, locally finite, quasi-transitive graph which is not two-ended satisfies $(\ddagger)$.
\end{lemma}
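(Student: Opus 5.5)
The approach is to argue by contradiction and reduce to a finiteness statement for ``tight'' separations, in the spirit of Thomassen--Woess. Suppose $\Gamma$ fails $(\ddagger)$. Then for some $a$ there are sets $S_n$ with $|S_n|\le a$ and finite components $C_n$ of $\Gamma-S_n$ with $|C_n|\to\infty$.

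First I would normalise the separations. Replace $S_n$ by $S_n':=N(C_n)\setminus C_n\subseteq S_n$. Then $C_n$ is still a component of $\Gamma-S_n'$, and every vertex of $S_n'$ has a neighbour in $C_n$. Since $\Gamma$ is infinite and $C_n$ is finite, $\Gamma-S_n'-C_n$ has at least one infinite component.

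I would then split into two cases.

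\emph{Case 1: infinitely many $n$ have an infinite component $D_n$ of $\Gamma-S_n'-C_n$ adjacent to every vertex of $S_n'$.} Then $(C_n,S_n',D_n)$ is a tight separation of order at most $a$: both sides are connected and both see all of the separator. I would invoke the standard fact, proved by induction on $a$ using local finiteness, that only finitely many tight separations of order at most $a$ have separator meeting a given finite ball. Since $C_n$ is connected and $S_n'\subseteq N(C_n)$, I can translate by an automorphism so that $S_n'$ meets a fixed finite set of orbit representatives. This leaves only finitely many candidates for $C_n$, contradicting $|C_n|\to\infty$.

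\emph{Case 2: for all large $n$, no infinite component of $\Gamma-S_n'-C_n$ sees all of $S_n'$.} Here $S_n'$ splits as a union of nonempty pieces, each attached to a different infinite component $D_n^i$, with at least two such components. The two-ended ladder shows this case genuinely occurs, so this is where the hypothesis on ends must enter. The plan is to show that large $C_n$ in this configuration forces $\Gamma$ to be two-ended.
\begin{enumerate}
\item Since $|C_n|\to\infty$ and degrees are bounded, $C_n\cup S_n'$ has diameter tending to infinity. Since $S_n'$ has at most $a$ vertices and $C_n$ is connected, there are two pieces of $S_n'$, say $A_n$ and $B_n$, far apart, joined through $C_n$, with each piece attached to its own infinite component $D_n^A$ and $D_n^B$.
\item Translate so that a vertex of $C_n$ roughly midway between $A_n$ and $B_n$ lies in a fixed finite set, and pass to a pointed limit along a subsequence. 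In the limit both $A_n$ and $B_n$ recede to infinity.
\item This yields a region of $\Gamma$ that separates at least two ends using boundedly many vertices, in both directions along a ``long thin'' finite tube.
\item Using quasi-transitivity, translate this tube along itself to obtain a bi-infinite chain of nested separations of order at most $a$, each containing the next.
\item Such a chain exhibits two thin ends. Moreover, the finite tubes $C_n$ would then exhaust everything between them. I expect this to show that $\Gamma$ has exactly two ends, contradicting the hypothesis.
\end{enumerate}
The alternative in step 5 is that $\Gamma$ has infinitely many ends. In that case I would try to rule out Case 2 by finding, inside the tube, a branching point that leads to an additional infinite component. That component would have to exit through $S_n'$, giving a third piece and ultimately contradicting the bound $|S_n'|\le a$ via an iteration.

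The main obstacle is Case 2, namely showing that long finite tubes with bounded boundary are incompatible with $\Gamma$ being one-ended or having infinitely many ends. I would first try to handle it by the nested-translates argument above. Failing that, I would appeal to the Dunwoody/Dicks--Dunwoody theory of nested cuts to produce a canonical tree of bounded-order separations, in which a finite piece of unbounded size between two far-apart adhesion sets forces the tree to be a line with finite parts, i.e.\ $\Gamma$ two-ended.
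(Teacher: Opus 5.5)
Your Case 1 is sound. It is the Thomassen--Woess finiteness of tight separators of bounded order through a fixed vertex, and translating a vertex of $S_n'$ to an orbit representative bounds $|C_n|$.

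The gap is Case 2. This is where ``not two-ended'' must enter, and you do not prove it; the sketch also fails at specific points.
\begin{enumerate}
\item Case 2 does not force $S_n'$ to split among two or more infinite components. $\Gamma-S_n'-C_n$ may have a single infinite component $D$ with $N(D)\subsetneq S_n'$, the remaining vertices of $S_n'$ seeing only $C_n$ and finite components. In a one-ended graph this is the only form Case 2 can take, so your plan, which starts from two pieces $A_n,B_n$ attached to distinct infinite components, does not apply there at all.
\item Step~1 is a non sequitur. Once $|C_n|>a\,v_\Gamma(r)$ you get a vertex of $C_n$ at distance more than $r$ from all of $S_n'$. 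Nothing, however, makes two pieces of $S_n'$ far apart, and the boundary may have bounded diameter.
\item Steps~3--5 have no mechanism behind them. A pointed limit of translates inside the fixed graph $\Gamma$ only records which vertices near the base point lie in $C_n$ or $S_n'$. Step~4 presupposes an automorphism carrying the tube into itself with nested separations, which you never construct. Both the infinitely-ended alternative and the Dicks--Dunwoody fallback are stated as hopes, not arguments.
\end{enumerate}

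The missing idea is to split according to the number of ends rather than according to tightness.

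\emph{Infinitely many ends.} Fix $a+1$ ends and a finite set separating them pairwise. Choose $R$ so that the $R$-balls about the finitely many orbit representatives contain this set; translating then shows that $\Gamma-B(v,R)$ has more than $a$ infinite components for every vertex $v$. Suppose some $v\in C$ had $d(v,S)>R+1$. Then $B(v,R+1)\subseteq C$, so each of these components would contain a vertex of $C$ and, being infinite, a vertex outside $C$. Each would therefore meet $N(C)\subseteq S$, contradicting $|S|\leq a$. Thus $C\subseteq B(S,R+1)$ and $|C|\leq a\,v_\Gamma(R+1)$.

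\emph{One end.} Let $Y$ be the unique infinite component of $\Gamma-N(C)$. If $N(Y)\neq N(C)$, replace $C$ by the component of $\Gamma-N(Y)$ containing it. This component is finite, because $Y$ is the only infinite component of $\Gamma-N(Y)$, and its neighbourhood lies in $N(Y)\subsetneq N(C)$. After at most $a$ such steps you reach a tight separation whose finite side contains $C$, and your Case 1 then bounds $|C|$.

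The paper quotes this lemma from \cite[Prop.~3.8]{mackay2026almost} without proof, so there is no in-paper argument to compare yours against.
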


\begin{lemma}[{\cite[Lem.~3.10]{mackay2026almost}}]\label{lem:regular-ddagger}
Let $f:\Gamma\to H$ be a coarsely surjective regular map between connected bounded-degree graphs.
If $\Gamma$ satisfies $(\ddagger)$, then $H$ satisfies $(\ddagger)$.
\end{lemma}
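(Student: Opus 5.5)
Fix $a\geq0$, a set $S\subseteq V(H)$ with $|S|\leq a$, and a finite component $C$ of $H-S$. The plan is to pull $S$ back to a bounded-size set $S'\subseteq V(\Gamma)$, use $(\ddagger)$ for $\Gamma$ to bound the part of $\Gamma$ that maps into $C$, and then push this bound forward to $C$ using coarse surjectivity.

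Fix constants as follows. Let $f$ be $\kappa$-regular with $\lambda$-coarsely dense image, and let $\Delta$ bound the degrees of $\Gamma$. If $\Gamma$ is finite, then $H$ is finite by coarse surjectivity and bounded degree, and there is nothing to prove, so assume $\Gamma$ is infinite. Set
$$S'=f^{-1}(N_\kappa(S)),$$
so that $|S'|\leq \kappa\, a\, v_H(\kappa)$, a bound depending only on $a$. Also set $X=f^{-1}(C)\setminus S'$, which is finite with $|X|\leq\kappa|C|$.

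The key step is to show that $X$ is a union of components of $\Gamma-S'$. Let $xy\in E(\Gamma)$ with $x\in X$. Then $f(x)$ and $f(y)$ are joined by a path of length at most $\kappa$ in $H$. Suppose $f(y)\notin C$. Then this path must meet $S$, which forces $d_H(f(x),S)\leq\kappa$ and hence $x\in S'$, a contradiction. So $f(y)\in C$, and therefore $y\in X\cup S'$.

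Now count. Every component of $\Gamma-S'$ contained in $X$ is finite, since $X$ is finite. By $(\ddagger)$ each such component has at most $m(|S'|)$ vertices. Since $\Gamma$ is connected and infinite, $X\neq V(\Gamma)$. Hence $S'$ is nonempty whenever $X$ is, and every component of $\Gamma-S'$ is adjacent to $S'$. There are therefore at most $\Delta|S'|$ such components, giving
$$|X|\leq \Delta|S'|\,m(|S'|).$$
This is where the only subtlety lies. We cannot bound $|X|$ directly through $|C|$, since that would be circular, so we use connectivity of $\Gamma$ to bound the number of components.

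Finally, push forward to $C$. Let $z\in C$. Choose $x$ with $d_H(z,f(x))\leq\lambda$, and a geodesic from $z$ to $f(x)$. Either this geodesic meets $S$, so that $z\in N_\lambda(S)$, or it stays inside $C$, so that $f(x)\in C$ and hence $x\in X\cup S'$. Therefore
$$|C|\leq v_H(\lambda)\bigl(a+|X|+|S'|\bigr),$$
which is bounded in terms of $a$ alone. This establishes $(\ddagger)$ for $H$.
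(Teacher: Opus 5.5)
Your proof is correct and complete. The paper gives no proof of this lemma; it only cites \cite[Lem.~3.10]{mackay2026almost}, so there is nothing in the paper to compare against. Your route is the natural one: pull $S$ back to $S'=f^{-1}(N_\kappa(S))$, show that $f^{-1}(C)\setminus S'$ is a union of components of $\Gamma-S'$, bound it using $(\ddagger)$ and the count of at most $\Delta|S'|$ components, and then push forward by $\lambda$-density. The pullback step is the same device the paper uses in Lemma~\ref{lem:regular-end-separators} and in the proof of Theorem~\ref{thm:reg-finite}.

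One cosmetic point: write $m(\kappa a\,v_H(\kappa))$ rather than $m(|S'|)$. Then the final bound visibly depends on $a$ alone. This is legitimate because $(\ddagger)$ applies uniformly to all sets of size at most $\kappa a\,v_H(\kappa)$.
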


For quasi-isometric embeddings, we will also need the following reduction, which does not require the target to have bounded degree.

\begin{lemma}[{\cite[Prop.~1.11]{macmanus2023accessibility}}]\label{lem:bounded-degree-qi-image}
Let $f:\Gamma\to\Pi$ be a quasi-isometric embedding between connected graphs, where $\Gamma$ has bounded degree.
Then $\Pi$ contains a connected bounded-degree subgraph which is quasi-isometric to $\Gamma$.
\end{lemma}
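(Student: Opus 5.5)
The plan is to build $H$ exactly as in Lemma~\ref{lem:regular-connected-image}, using geodesics in $\Pi$, and then to check that the quasi-isometric embedding hypothesis forces $H$ to have bounded degree even though $\Pi$ itself need not.

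Let $f$ be a $\lambda$-quasi-isometric embedding. For every edge $xy\in E(\Gamma)$, \eqref{eq:quasi-isometry-inequalities} gives $d_\Pi(f(x),f(y))\leq 2\lambda$. Choose a geodesic $P_{xy}$ in $\Pi$ from $f(x)$ to $f(y)$; it is a simple path of length at most $2\lambda$, or a constant path if $f(x)=f(y)$. Let $H$ be the union of all the $P_{xy}$, together with the vertices $f(V(\Gamma))$.

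\emph{$H$ is connected and quasi-isometric to $\Gamma$.}
\begin{itemize}
\item $H$ is connected because $\Gamma$ is connected.
\item Concatenating the paths $P_{xy}$ along a geodesic in $\Gamma$ gives $d_H(f(x),f(y))\leq 2\lambda\, d_\Gamma(x,y)$.
\item Since $H\subseteq\Pi$, we have $d_H\geq d_\Pi$, so the lower bound in \eqref{eq:quasi-isometry-inequalities} transfers from $\Pi$ to $H$.
\item Every vertex of $H$ lies on some $P_{xy}$, hence within distance $2\lambda$ of $f(x)$. So $f$ is coarsely surjective onto $H$.
\end{itemize}
Together these show $f:\Gamma\to H$ is a quasi-isometry.

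\emph{$H$ has bounded degree.} This is the only real obstacle, since vertices of $\Pi$ may have infinite degree. Fix $v\in V(H)$. Every edge of $H$ at $v$ comes from some path $P_{xy}$ passing through $v$. Each such path is simple, so it contributes at most two edges at $v$. It therefore suffices to bound the number of edges $xy$ of $\Gamma$ whose path passes through $v$.

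If $P_{xy}$ contains $v$, then $d_\Pi(f(x),v)\leq 2\lambda$. Let $X=\{x\in V(\Gamma): d_\Pi(f(x),v)\leq 2\lambda\}$. For any $x,x'\in X$ we have $d_\Pi(f(x),f(x'))\leq 4\lambda$. The lower bound in \eqref{eq:quasi-isometry-inequalities} then gives $d_\Gamma(x,x')\leq \lambda(4\lambda+\lambda)=5\lambda^2$. Hence $X$ lies in a single ball of radius $5\lambda^2$ in $\Gamma$, so $|X|\leq v_\Gamma(5\lambda^2)$.

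The relevant edges are all incident to $X$, so there are at most $D\cdot v_\Gamma(5\lambda^2)$ of them, where $D$ is the maximum degree of $\Gamma$. Consequently
\[
\deg_H(v)\leq 2D\, v_\Gamma(5\lambda^2),
\]
a bound independent of $v$. This completes the plan.
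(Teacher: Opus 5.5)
The paper does not prove this lemma itself; it cites \cite[Prop.~1.11]{macmanus2023accessibility}. Your union-of-geodesics argument is correct and complete, and it is the standard proof of this fact: it is the construction of Lemma~\ref{lem:regular-connected-image}, with the lower quasi-isometry bound (via the bound $|X|\leq v_\Gamma(5\lambda^2)$ on how many edges' paths pass through a vertex) replacing the bounded-degree hypothesis on the target.
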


\subsection{Ends and accessibility}\label{subsec:ends-accessibility}

Let $\Gamma$ be a connected locally finite graph.
Two rays in $\Gamma$ are \emph{end-equivalent} if, after deleting any finite vertex set, tails of the two rays lie in the same component.
The resulting equivalence classes are the \emph{ends} of $\Gamma$, and their set is denoted by $\Omega(\Gamma)$.
If $S\subseteq V(\Gamma)$ is finite and $C$ is a component of $\Gamma-S$, we say that an end $\omega$ \emph{lies in} $C$ if every ray representing $\omega$ has a tail in $C$.
For distinct ends $\omega,\xi\in\Omega(\Gamma)$, let $\operatorname{vs}(\omega,\xi)$ be the minimum cardinality of a vertex set separating them, and let $\operatorname{es}(\omega,\xi)$ be the minimum cardinality of an edge set separating them.
We set
\begin{equation}\label{eq:end-separation-infima}
\operatorname{vs}(\Gamma):=\inf_{\omega\neq\xi}\operatorname{vs}(\omega,\xi),
\qquad
\operatorname{es}(\Gamma):=\inf_{\omega\neq\xi}\operatorname{es}(\omega,\xi),
\end{equation}
with both quantities taken to be infinite when $\Gamma$ has at most one end.
If $\Gamma$ has maximum degree at most $\Delta$, then $\operatorname{vs}(\Gamma)\leq\operatorname{es}(\Gamma)\leq\Delta\operatorname{vs}(\Gamma)$.
Following Thomassen--Woess \cite{thomassen1993vertex}, the graph $\Gamma$ is called \emph{accessible} if there is a uniform finite bound on the number of edges needed to separate any two distinct ends.
For bounded-degree graphs this is equivalent to the analogous condition using vertex separators. Accessibility is invariant under quasi-isometry among connected bounded-degree graphs.

An end is \emph{thick} if it contains infinitely many pairwise vertex-disjoint rays.
An end $\omega$ is \emph{ultra-thick} if, for every $n\geq1$, there is an end $\xi\neq\omega$ such that $\operatorname{vs}(\omega,\xi)>n$.
Every ultra-thick end is thick.
By \cite[Thm.~5.4]{thomassen1993vertex}, a connected, locally finite, quasi-transitive graph is inaccessible if and only if it has an ultra-thick end; indeed, every such inaccessible graph has uncountably many ultra-thick ends \cite[Thm.~1]{moller1996accessibility}.
Quasi-isometries between connected bounded-degree graphs induce bijections on ends preserving thickness and ultra-thickness.
For regular maps, we will use the following weaker conclusions.

\begin{lemma}\label{lem:regular-map-on-ends}
Let $f:\Gamma\to\Lambda$ be a regular map between connected bounded-degree graphs. Then $f$ induces a map $f_*:\Omega(\Gamma)\to\Omega(\Lambda)$ which sends thick ends to thick ends. If $f$ is coarsely surjective, then $f_*$ is surjective.
\end{lemma}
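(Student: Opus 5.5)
The plan is to define $f_*$ on rays and then check three things: that it is well defined, that it preserves thickness, and that it is surjective when $f$ is coarsely surjective. Fix $\kappa$ such that $f$ is $\kappa$-regular, and let $\Delta$ bound the degrees of $\Gamma$ and $\Lambda$.

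\textbf{Definition of $f_*$.} Let $R=x_0x_1x_2\ldots$ be a ray in $\Gamma$. Consecutive images satisfy $d_\Lambda(f(x_i),f(x_{i+1}))\le\kappa$, so I join them by geodesics of length at most $\kappa$ to get a path $\hat R$ in $\Lambda$. Every vertex of $\Lambda$ has at most $\kappa$ preimages. Hence a vertex $z$ of $\Lambda$ lies on only finitely many of the connecting geodesics: such a geodesic starts at some $f(x_i)$ with $x_i$ in the preimage of the finite $\kappa$-ball around $z$, and the $x_i$ are distinct. So $\hat R$ visits each vertex finitely often. By the standard fact that such a one-way infinite walk contains a ray, and that all rays extracted from it are equivalent, $\hat R$ determines an end $f_*(R)$.

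\textbf{Well-definedness.} Let $R$ and $R'$ be equivalent rays. Given a finite $S\subseteq V(\Lambda)$, consider the finite set $S'=f^{-1}(N_\kappa(S))$. Tails of $R$ and $R'$ lie in one component $C$ of $\Gamma-S'$. Any path in $C$ maps, after the same geodesic interpolation, to a walk avoiding $S$. Therefore the tails of $\hat R$ and $\hat R'$ lie in the same component of $\Lambda-S$, and $f_*$ is well defined on ends.

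\textbf{Thickness.} This is the step I expect to be the main obstacle, because disjoint rays need not have disjoint images. Let $\omega$ be thick, with infinitely many disjoint rays $R_1,R_2,\ldots$. For any $n$, take $n(\kappa v_\Lambda(\kappa)+1)$ of them. The interpolated walk of $R_j$ stays inside $N_\kappa(f(R_j))$. A vertex $z$ lies in $N_\kappa(f(R_j))$ only if some vertex of $f^{-1}(N_\kappa(z))$ lies on $R_j$. That set has at most $\kappa v_\Lambda(\kappa)$ elements, and the $R_j$ are disjoint, so each vertex $z$ lies in at most $\kappa v_\Lambda(\kappa)$ of these neighbourhoods.

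By Halin's criterion, an end is thick if and only if it admits $n$ disjoint rays for every $n$. I would argue via Menger: if $f_*\omega$ had at most $k$ disjoint rays, then there would be separators of size $k$ separating any finite set from the end. Each such separator of size $k$ meets the walks $\hat R_j$ of at most $k\kappa v_\Lambda(\kappa)$ indices $j$. Pick an index outside this set whose ray starts far away; its walk must still pass through the separator to reach $f_*\omega$, a contradiction. So the precise plan is this: if $f_*\omega$ is thin, with degree $k$, choose a sequence of $k$-separators $S_m$ exhausting toward $f_*\omega$. Infinitely many walks $\hat R_j$ avoid $S_1$, yet they must cross every $S_m$ eventually. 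Using the bounded multiplicity, more than $k\kappa v_\Lambda(\kappa)$ disjoint walks cannot all be separated by $k$ vertices. This contradicts thinness, so $f_*\omega$ is thick.

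\textbf{Surjectivity.} Assume $f$ is $\lambda$-coarsely surjective and let $\xi$ be an end of $\Lambda$, represented by a ray $z_0z_1\ldots$. Choose $y_i\in\Gamma$ with $d(f(y_i),z_i)\le\lambda$. Since fibres are finite, the $y_i$ leave every finite set. By König's lemma (local finiteness), after passing to a subsequence the $y_i$ converge to an end $\omega$ of $\Gamma$, in the sense that for each finite $S'$ almost all $y_i$ lie in one component, namely the one containing $\omega$. Then connect the $y_i$ along that component by paths; these paths eventually avoid any prescribed finite set. Applying the well-definedness argument shows that $f_*\omega$ and $\xi$ agree on every finite $S$, so $f_*\omega=\xi$.
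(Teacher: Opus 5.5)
Your proposal is correct and is essentially the paper's argument: the core is the same multiplicity bound $\kappa v_\Lambda(\kappa)$ on routed images of disjoint rays, combined with Menger's theorem and Halin's theorem, and your hands-on construction of $f_*$ and subsequence argument for surjectivity simply unpack the paper's appeal to proper maps and the Freudenthal compactification. The thickness contradiction needs one tightening: fix $k\kappa v_\Lambda(\kappa)+1$ of the walks first and take a separator of size at most $k$ between their starting vertices and $f_*\omega$, since a walk that merely starts far away need not meet the separator; also, the existence of such separators for an end of degree $k$ is the end version of Menger's theorem, which the paper avoids by applying Menger inside the union of the routed rays, where every ray automatically lies in $f_*(\omega)$.
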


\begin{proof}
Choose $\kappa$ such that $f$ is $\kappa$-regular, and extend $f$ over each edge by a path of length at most $\kappa$. This gives a proper continuous map, which induces a map $f_*:\Omega(\Gamma)\to\Omega(\Lambda)$ independent of the chosen edge paths; see e.g. \cite[Prop.~I.9.11]{baues2001infinite}. More generally, $f$ extends to a continuous map between the Freudenthal compactifications of $\Gamma$ and $\Lambda$. 
If $f$ is coarsely surjective, every end of $\Lambda$ lies in the closure of $f(V(\Gamma))$ in its Freudenthal compactification. The continuous extension of $f$ has compact image, so every such end has a preimage, necessarily in $\Omega(\Gamma)$. Thus $f_*$ is surjective.

Now let $\omega$ be thick and put $C:=\kappa v_\Lambda(\kappa)$. Routing pairwise disjoint rays in $\omega$ through $f$ and deleting loops gives rays in $f_*(\omega)$, with each target vertex lying on at most $C$ of them. Indeed, any source ray whose routed image meets $z$ meets $f^{-1}(B_\Lambda(z,\kappa))$. For every positive integer $n$, take $nC$ such image rays. In their union, every vertex set separating their initial vertices from infinity has size at least $n$, so Menger's theorem and local finiteness give $n$ pairwise disjoint rays, all belonging to $f_*(\omega)$. Halin's theorem that end degree is attained \cite[p.~241]{diestel2025graph} now implies that $f_*(\omega)$ is thick.
\end{proof}

This also recovers \cite[Lem.~3.9]{mackay2026almost}, which states that a coarsely surjective regular map between infinite connected bounded-degree graphs takes a one-ended source to a one-ended target.
We also need quantitative control of separators under regular maps.

\begin{lemma}\label{lem:regular-end-separators}
Let $f:\Gamma\to\Lambda$ be a regular map between connected bounded-degree graphs.
For every $a\geq0$, there exists $b=b(a)$ such that, whenever $\omega,\xi\in\Omega(\Gamma)$ have distinct image ends satisfying $\operatorname{vs}(f_*(\omega),f_*(\xi))\leq a$, we have $\operatorname{vs}(\omega,\xi)\leq b$.
\end{lemma}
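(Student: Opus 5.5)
Given $a$, a separator $S\subseteq V(\Lambda)$ with $|S|\le a$ separating $f_*(\omega)$ from $f_*(\xi)$ should be pulled back to a separator of bounded size in $\Gamma$. Fix $\kappa$ with $f$ $\kappa$-regular, and extend $f$ over each edge $xy$ by a path $P_{xy}$ of length at most $\kappa$, as in the proof of Lemma~\ref{lem:regular-map-on-ends}. Let $S\subseteq V(\Lambda)$ with $|S|\le a$ separate $f_*(\omega)$ and $f_*(\xi)$, and set
\[
S' := f^{-1}\bigl(N_\kappa(S)\bigr)\subseteq V(\Gamma),
\]
where $N_\kappa(S)$ is the closed $\kappa$-neighbourhood. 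Since $|N_\kappa(S)|\le a\,v_\Lambda(\kappa)$ and fibres have size at most $\kappa$, we get $|S'|\le b:=\kappa\, a\, v_\Lambda(\kappa)$, which depends only on $a$.

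Next I claim $S'$ separates $\omega$ from $\xi$. Suppose $R$ is a path or ray in $\Gamma-S'$. For each edge $xy$ of $R$ we have $f(x)\notin N_\kappa(S)$, so every vertex of $P_{xy}$ lies within distance $\kappa$ of $f(x)$ and hence avoids $S$. Thus the concatenated image $f(R)$ is a walk in $\Lambda-S$. Now take rays $r\in\omega$ and $q\in\xi$. Since $S'$ is finite, tails of both can be chosen in $\Gamma-S'$. If they lay in a common component of $\Gamma-S'$, joining them by a path in $\Gamma-S'$ would produce a one-way infinite walk in $\Lambda-S$ passing from the image of the tail of $r$ to that of $q$. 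By definition of $f_*$, via the proper continuous extension, the image of a tail of $r$ converges to $f_*(\omega)$; after loop-erasing, as in Lemma~\ref{lem:regular-map-on-ends}, it contains a ray of $f_*(\omega)$ lying in $\Lambda-S$, and similarly for $\xi$. These two rays would then lie in the same component of $\Lambda-S$, which is a contradiction. Hence $\operatorname{vs}(\omega,\xi)\le|S'|\le b$.

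The main obstacle is the step that the image of a ray tail lies in the component of $\Lambda-S$ containing $f_*(\omega)$. This requires the walk $f(r)$ to be proper, i.e.\ to eventually leave every finite set, so that loop-erasure gives a genuine ray in the right end. Properness follows from $f$ being bounded-to-one, since a ray visits each vertex once and only finitely many vertices map into any finite set. The remaining check is that the component of $\Lambda-S$ containing the proper walk $f(r)$ from some point on is exactly the one in which $f_*(\omega)$ lies, which holds by the construction of $f_*$ in Lemma~\ref{lem:regular-map-on-ends}.
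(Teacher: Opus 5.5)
Your proposal is correct and follows the paper's proof: the same pullback $S'=f^{-1}(B_\Lambda(S,\kappa))$, the same bound $b=\kappa v_\Lambda(\kappa)a$, and the same observation that paths in $\Gamma-S'$ map, after filling in edges by paths of length at most $\kappa$, to walks in $\Lambda-S$. Your additional argument (properness of the image walk, loop-erasure, and identification of the resulting ray's end with $f_*(\omega)$) simply spells out the step the paper compresses into ``consequently''. One small imprecision: properness uses that only finitely many vertices of $\Gamma$ map into the $\kappa$-neighbourhood of a finite set, which is finite by local finiteness of $\Lambda$.
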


\begin{proof}
Choose $\kappa$ such that $f$ is $\kappa$-regular.
Choose $S\subseteq V(\Lambda)$ separating the image ends, with $|S|\leq a$, and put $S':=f^{-1}(B_\Lambda(S,\kappa))$.
We have $|S'|\leq \kappa v_\Lambda(\kappa)a$.
Every path in $\Gamma-S'$ maps, after replacing its edges by paths of length at most $\kappa$, to a walk in $\Lambda-S$.
Consequently, $S'$ separates $\omega$ from $\xi$, and we may take $b=\kappa v_\Lambda(\kappa)a$.
\end{proof}

Let $(T,\beta)$ be a tree-decomposition of a connected locally finite graph with finite adhesion.
Every end orients each edge of $T$ towards the side of its induced separation containing a tail of that end.
We say that the end is \emph{carried by} a vertex $t\in V(T)$ if $t$ is a sink for this orientation.
The following observation extends \cite[Prop.~4.8(ii)]{hamann2022stallings} to the finite-adhesion convention used here.

\begin{lemma}\label{lem:thick-end-carried}
Let $\Gamma$ be a connected locally finite graph with a tree-decomposition $(T,\beta)$ of finite adhesion.
Every thick end of $\Gamma$ is carried by a vertex of $T$.
\end{lemma}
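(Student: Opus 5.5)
We argue by contradiction, orienting $T$ by a thick end and ruling out the two ways the orientation can fail to have a sink. Let $\omega$ be a thick end, and let $\vec{E}_\omega$ be the orientation of $E(T)$ it induces. This orientation is well defined: each adhesion set $\beta(s)\cap\beta(t)$ is finite, so by (T1)--(T3) the separation induced by $st$ has a finite separator. Hence $\omega$ has tails on exactly one strict side.

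The orientation is consistent. If $st$ is oriented towards $t$ and $t'\neq s$ is another neighbour of $t$, then $tt'$ is oriented away from $t$ unless $\omega$ lies on the $t'$ side. Moreover, at most one edge at $t$ points away from $t$, because the strict sides beyond distinct neighbours of $t$ are disjoint. So from any starting vertex we may follow the out-edges: either we reach a sink, in which case $\omega$ is carried and we are done, or we obtain a ray $t_1t_2\dots$ in $T$ with every edge $t_it_{i+1}$ oriented forwards.

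Assume the second alternative. By finite adhesion (Definition~\ref{def:finite-adhesion}) there is $k$ and an infinite subsequence $i_1<i_2<\dots$ with $|A_j|\le k$, where $A_j:=\beta(t_{i_j})\cap\beta(t_{i_j+1})$. Since $\omega$ is thick, choose $k+1$ pairwise disjoint rays $R_0,\dots,R_k$ in $\omega$. Let $U_j$ be the strict side of the separation at $t_{i_j}t_{i_j+1}$ containing $t_{i_j+1}$. Each $R_m$ has a tail in $U_j$.

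We claim that $\bigcap_j U_j=\emptyset$. For a vertex $x$, the set of nodes whose bags contain $x$ is a subtree, by (T3). If $x\in U_j$ for every $j$, this subtree must meet the component of $T-t_{i_j}t_{i_j+1}$ containing $t_{i_j+1}$ for all $j$, without containing the edge itself. The tails of the ray $t_1t_2\dots$ cut out a decreasing sequence of components of $T$ with empty intersection. The subtree at $x$ would have to be contained in every one of them, which is impossible.

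Consequently, for any fixed finite initial segments of the rays, all of them lie outside $U_j$ for large $j$, while tails of all $k+1$ rays lie in $U_j$. Each $R_m$ therefore meets the separator $A_j$, and by disjointness of the rays this gives $|A_j|\ge k+1$, a contradiction.

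The main subtlety is the case where a ray starts inside $U_j$ for every $j$. This is handled by the emptiness of $\bigcap_j U_j$: choose $j$ large enough that the starting vertices of $R_0,\dots,R_k$ are not in $U_j$, and then each ray must cross $A_j$. Local finiteness is not even needed beyond the finiteness of the adhesion sets. Hence the orientation has a sink, and $\omega$ is carried by a vertex of $T$.
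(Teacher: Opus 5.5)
Your proof is correct and is essentially the paper's argument: orient $T$ towards $\omega$, extract a directed ray from the absence of a sink, take $k+1$ disjoint rays in $\omega$, and go far enough along the ray that all their starting points lie off the strict far side, so each ray must cross an adhesion set of size at most $k$. One small tightening is needed. You use that each fixed vertex lies in $U_j$ for only finitely many $j$, which is stronger than $\bigcap_j U_j=\emptyset$ as stated, but your (T3) argument gives it directly: $x\in U_j$ forces the nonempty subtree of bags containing $x$ into the shrinking component beyond $t_{i_j}t_{i_j+1}$ (equivalently, the $U_j$ are decreasing).
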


\begin{proof}
Let $\omega$ be a thick end and orient $T$ towards $\omega$.
If this orientation has no sink, successively following outgoing edges produces a directed ray $t_0,t_1,t_2,\ldots$.
Put $S_i:=\beta(t_i)\cap\beta(t_{i+1})$.
Finite adhesion gives an integer $a$ such that $|S_i|\leq a$ for infinitely many $i$.
Choose $a+1$ pairwise disjoint rays representing $\omega$, and fix one vertex on each of them and a bag containing each chosen vertex.
For all sufficiently large $i$, the chosen bags lie on the $t_i$-side of $t_it_{i+1}$, while tails of the rays lie on the strict $t_{i+1}$-side.
Each ray therefore meets $S_i$.
Choosing such an $i$ with $|S_i|\leq a$ contradicts disjointness of the $a+1$ rays.
\end{proof}

We recall the accessibility theorem for quasi-planar graphs.

\begin{theorem}[{\cite[Thm.~A]{macmanus2023accessibility}}]\label{thm:quasi-planar-accessible}
Every connected, locally finite, quasi-transitive, quasi-planar graph is accessible.
\end{theorem}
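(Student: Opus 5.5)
This result is quoted from \cite[Thm.~A]{macmanus2023accessibility}. If I had to prove it here, I would argue by contradiction and aim to reduce to a graph in which all ends are highly separated. The main input I would want is the accessibility of locally finite, quasi-transitive \emph{planar} graphs, due to Dunwoody. The whole difficulty is that the planar graph in the quasi-isometry class of $\Gamma$ carries no symmetry.

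\textbf{Step 1 (extracting a highly connected part).} Suppose $\Gamma$ is inaccessible. By \cite[Thm.~5.4]{thomassen1993vertex} it has an ultra-thick end. For each $n$, I would use Dicks--Dunwoody cut theory \cite{dicks1989groups} to build a canonical tree-decomposition of finite adhesion. It should separate every pair of ends $\omega,\xi$ with $\operatorname{vs}(\omega,\xi)\le n$, using cuts of bounded size. By Lemma~\ref{lem:thick-end-carried}, an ultra-thick end is carried by some bag. Its part $P_n$ should then be a connected, locally finite, quasi-transitive graph with three properties:
\begin{enumerate}
\item[(a)] it is quasi-isometrically embedded in $\Gamma$;
\item[(b)] it still has an ultra-thick end;
\item[(c)] every two of its ends need more than $n$ vertices to separate them.
\end{enumerate}
Composing the embedding in (a) with the quasi-isometry $\Gamma\to\Pi$, where $\Pi$ is planar, and applying Lemma~\ref{lem:bounded-degree-qi-image}, makes $P_n$ quasi-isometric to a bounded-degree planar graph $\Pi_n\subseteq\Pi$. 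The quasi-isometry constants are those of $\Gamma\to\Pi$ together with a constant depending on $n$.

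\textbf{Step 2 (planar contradiction).} Here I would show directly that a bounded-degree planar graph cannot have two thick ends with a large minimum separator while also being coarsely quasi-transitive through a quasi-isometry. The plan is to exploit the fact that distinct ends of a planar graph are separated by a finite separator lying along a closed curve, which can be chosen to run through faces. The quasi-transitivity of $P_n$ would make the relevant face boundaries uniformly bounded after a cocompact change of drawing, so a finite separator of uniformly bounded size would exist between distinct ends of $\Pi_n$. Pulling this back along the quasi-isometry, as in Lemma~\ref{lem:regular-end-separators}, gives $\operatorname{vs}(P_n)\le C$, where $C$ depends only on the constants of $\Gamma\to\Pi$. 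Choosing $n>C$ contradicts (c), because $P_n$ has more than one end.

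\textbf{Main obstacle.} The main obstacle is the uniformity in Step~2. A planar graph can have faces of unbounded length, and then ends are separated only by long curves. I would first try to replace $\Pi$ by a planar graph with uniformly bounded faces, quasi-isometric to it, by triangulating faces within bounded distance. Failing that, I would instead prove that a quasi-planar quasi-transitive graph is quasi-isometric to a planar quasi-transitive graph, and then invoke Dunwoody's theorem directly.
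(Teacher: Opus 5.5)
\textbf{The statement is not proved in the paper.} It is quoted from \cite[Thm.~A]{macmanus2023accessibility} and used as a black box. Your Step~1 is exactly the reduction the paper performs around it (Proposition~\ref{prop:highly-connected-parts}, Lemma~\ref{lem:ultra-thick-in-part}, Lemma~\ref{prop:inaccessible-almost-planar-part}). The contradiction in the proof of Theorem~\ref{thm:main} is then obtained by \emph{citing} the present theorem for the resulting part $P$. So Step~1 is sound, but it does no work towards the statement itself. Your Step~2 uses only that the source is quasi-transitive and quasi-isometric to a planar graph. If it worked, it would apply verbatim to $\Gamma\to\Pi$ and bound separators between ends of $\Pi$ uniformly, which is accessibility outright. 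Step~2 is the whole theorem, and what you offer for it has a genuine gap.

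\textbf{Why Step~2 fails.} No group acts on $\Pi$ or $\Pi_n$, and quasi-isometries carry neither faces, drawings, nor separating curves, so a ``cocompact change of drawing'' has no content. Bounded faces are neither attainable nor sufficient.
\begin{itemize}
\item \emph{Not attainable.} An infinite, connected, locally finite graph drawn without accumulation points, with every face bounded by finitely many edges, is one-ended. So the multi-ended $\Pi_n$ keeps an infinite face however you add chords between nearby vertices.
\item \emph{Not sufficient.} The product of a bi-infinite path with a $k$-cycle, drawn as concentric $k$-cycles, is vertex-transitive and planar, of degree four, with all faces $4$-cycles. Yet its two ends need $k$ vertices to separate them.
\end{itemize}
The same example shows that no separator bound depending only on quasi-isometry constants and degrees can hold. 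So your bookkeeping cannot be repaired inside $\Pi_n$: the quasi-isometry $P_n\to\Pi_n$ has $n$-dependent constants, which makes ``choose $n>C$'' circular. The only $n$-independent pull-back is along the restriction to $P_n$ of $\Gamma\to\Pi$. After passing to a bounded-degree subgraph of $\Pi$, this map is regular with fixed constants, as in Lemma~\ref{lem:regular-end-separators}. That pull-back needs uniformly bounded separators between ends of the fixed graph $\Pi$, which is the theorem again.

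\textbf{What is missing.} You need a mechanism that turns the symmetry of $\Gamma$, seen only through a quasi-isometry respecting no planar structure, into uniform control of separating curves in an asymmetric planar model. Supplying this is exactly what \cite[Thm.~A]{macmanus2023accessibility} has to do. Your fallback does not avoid it. Showing that quasi-planar quasi-transitive graphs are quasi-isometric to planar quasi-transitive graphs is, combined with the planar accessibility theorem you cite, at least as strong as the statement. Upgrades of this kind, such as \cite[Cor.~D]{macmanus2023accessibility}, are derived from accessibility rather than used to prove it.
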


We will also need the following characterisation of quasi-trees. 

\begin{theorem}[{\cite[Thm.~5.5]{KM08}}]\label{lem:thin-ended-almost-planar}
A connected, locally finite, quasi-transitive graph with no thick end is a quasi-tree.
\end{theorem}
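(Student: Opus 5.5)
The plan is to reduce to a tree-decomposition whose bags are uniformly bounded, and then to read off a quasi-isometry to the underlying tree. The first step is to observe that $\Gamma$ is accessible. Indeed, by \cite[Thm.~5.4]{thomassen1993vertex} an inaccessible connected, locally finite, quasi-transitive graph has an ultra-thick end. Every ultra-thick end is thick, so the hypothesis rules this out.

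Next, I would invoke the canonical Stallings-type decomposition for accessible quasi-transitive graphs (as in \cite{hamann2022stallings}). This gives an $\operatorname{Aut}(\Gamma)$-invariant tree-decomposition $(T,\beta)$ with bounded adhesion and finitely many orbits of edges of $T$. Its torsos are either finite or have at most one end. Each torso $\Gamma[\![\beta(t)]\!]$ is connected and locally finite. It is also quasi-transitive under $\operatorname{Aut}(\Gamma)_t$, since adhesion sets are finite and there are only finitely many orbits of adhesion sets meeting $\beta(t)$.

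The key step is to show that no torso is infinite. Suppose some torso $\Gamma[\![\beta(t)]\!]$ were infinite, hence one-ended. A one-ended, infinite, locally finite, quasi-transitive graph has a thick end. To see this, note that for every $n$ the end cannot be separated from a basepoint by fewer than $n$ vertices uniformly. Translating large finite separators by automorphisms then gives arbitrarily many disjoint rays via Menger's theorem, and Halin's theorem applies.

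I would then lift these disjoint rays from the torso to $\Gamma$. Each torso edge coming from an adhesion clique is replaced by a path through the adjacent side. This can be done while keeping the rays disjoint up to a bounded overlap, which is controlled by the adhesion bound and local finiteness. An argument like the one in Lemma~\ref{lem:regular-map-on-ends} then produces a thick end of $\Gamma$ carried by $t$, which is a contradiction.

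I expect this lifting to be the main obstacle. The difficulty is that the detour paths through different sides may intersect. I would handle it by choosing, for each of the finitely many orbits of adhesion sets, a fixed finite connected subgraph on the far side containing that set. Translating these subgraphs gives detours, each used by boundedly many rays, and the Menger argument from Lemma~\ref{lem:regular-map-on-ends} then absorbs the bounded multiplicity.

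Once every torso is finite, there are finitely many orbits of bags, so the bags have uniformly bounded size. Because $\Gamma$ is connected, each bag also has uniformly bounded diameter in $\Gamma$, and adjacent bags intersect. Consequently, the map sending a vertex $x$ to any $t$ with $x\in\beta(t)$ is a quasi-isometry $\Gamma\to T$.

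- It is coarsely Lipschitz because each edge of $\Gamma$ lies in a single bag, by (T2), and the subtree $\{t : x\in\beta(t)\}$ has bounded diameter. This last fact follows from bounded degree and there being finitely many orbits.
- Lower bounds hold because a path in $T$ of length $\ell$ gives a chain of $\ell+1$ bags in which consecutive bags intersect, so points of these bags are at distance at most a constant times $\ell$ in $\Gamma$. Conversely, any path in $\Gamma$ between $\beta(s)$ and $\beta(t)$ must pass through every adhesion set on the $s$–$t$ geodesic in $T$.

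It follows that $\Gamma$ is a quasi-tree.
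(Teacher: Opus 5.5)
The paper gives no proof of this statement. It imports it from \cite{KM08}, where it is deduced from general criteria for a geodesic space to be quasi-isometric to a tree. Your route is genuinely different and sound in outline. You reduce to the Thomassen--Woess dichotomy and the accessible Stallings-type decomposition, both of which the paper already uses, so the result would follow from the paper's own toolkit, at the cost of resting on two heavier theorems rather than a direct metric argument. Several steps need tightening, though.

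The step you call the main obstacle disappears if you work with parts rather than torsos. The part $\Gamma[\beta(t)]$ is a subgraph, and disjoint rays that are equivalent in a subgraph remain disjoint and equivalent in $\Gamma$, so a thick end of an infinite part is already a thick end of $\Gamma$. If you prefer torsos, the identity from the torso into $\Gamma$ is injective and Lipschitz, since adhesion sets have uniformly bounded diameter, so Lemma~\ref{lem:regular-map-on-ends} applies verbatim.

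The bound on the diameter of $\{t : x\in\beta(t)\}$ does not follow from bounded degree and finitely many orbits alone. Redundant nodes, whose separations have an empty strict side, can make this subtree infinite. You must first contract such edges equivariantly and then argue as in the proof of Lemma~\ref{lem:planar-assembly}. That bound, together with nonempty bags, also gives coarse surjectivity of $x\mapsto t(x)$, which you did not check.

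Your sketch that a one-ended quasi-transitive graph has a thick end is too vague to verify; read literally it is false, since a single vertex is always cut off from the end by its neighbourhood. Here is a clean argument. Suppose the end of a one-ended, quasi-transitive $P$ had degree $k$. Menger's theorem gives, for every finite $F\subseteq V(P)$, a set $S$ with $|S|\leq k$ such that $F\setminus S$ lies in finite components of $P-S$. There are at most $k\Delta(P)$ such components, so this contradicts $(\ddagger)$ (Lemma~\ref{lem:quasi-transitive-ddagger}) once $|F|$ is large.

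Finally, quasi-transitivity of a part under its stabiliser is supplied by the decomposition theorem, or follows from bounded bag multiplicity. It does not follow from the reason you give.
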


We use the following standard consequence of the structure-tree construction of Dicks--Dunwoody \cite[Thm.~II.2.20]{dicks1989groups} and Thomassen--Woess \cite[Sec.~7]{thomassen1993vertex}; see also \cite[Sec.~4]{hamann2022stallings} and \cite[Thm.~2.15]{macmanus2023accessibility}.

\begin{proposition}\label{prop:highly-connected-parts}
Let $\Gamma$ be a connected, locally finite, quasi-transitive graph.
For every integer $n\geq1$, there is a canonical tree-decomposition $(T,\beta)$ of $\Gamma$ with the following properties:
\begin{enumerate}
\item The parts are connected, adhesion is bounded, and $T/\operatorname{Aut}(\Gamma)$ is finite.
\item The stabiliser of $t$ acts quasi-transitively on the part $P_t:=\Gamma[\beta(t)]$.
\item Every part satisfies $\operatorname{es}(P_t)\geq n$.
\item The inclusion $P_t\hookrightarrow\Gamma$ is a quasi-isometric embedding and induces an injection $\Omega(P_t)\hookrightarrow\Omega(\Gamma)$.
\end{enumerate}
\end{proposition}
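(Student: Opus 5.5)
This is a standard consequence of the Dicks--Dunwoody structure-tree machinery, so the plan is to assemble it from the cited sources rather than prove it from scratch.

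\textbf{Step 1: build the tree.} Fix $n$. Consider the set $\mathcal{E}$ of all edge cuts $E(C,V\setminus C)$ of $\Gamma$ satisfying three conditions: the cut has fewer than $n$ edges, both $C$ and $V\setminus C$ are infinite, and both $\Gamma[C]$ and $\Gamma[V\setminus C]$ are connected.
\begin{itemize}
\item By \cite[Thm.~II.2.20]{dicks1989groups}, as adapted to quasi-transitive graphs in \cite[Sec.~7]{thomassen1993vertex}, there is an $\operatorname{Aut}(\Gamma)$-invariant nested subsystem $\mathcal{E}'$ of such cuts.
\item $\mathcal{E}'$ separates every pair of ends that can be separated by fewer than $n$ edges.
\item Local finiteness and quasi-transitivity give only finitely many orbits of such cuts, since each cut is a bounded set of edges meeting a fixed finite fundamental domain up to translation. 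Hence $\mathcal{E}'$ has finitely many orbits.
\item The nested family $\mathcal{E}'$ is a tree set, and its structure tree $T$ has edge set $\mathcal{E}'$ and vertices the orientation classes. This gives canonicity and finiteness of $T/\operatorname{Aut}(\Gamma)$.
\end{itemize}

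\textbf{Step 2: define the bags.} For a vertex $t$ of $T$, let $\beta(t)$ consist of the vertices lying on the $t$-side of every cut at $t$, together with the endpoints of the cut edges at $t$. This follows \cite[Sec.~4]{hamann2022stallings} and \cite[Thm.~2.15]{macmanus2023accessibility}.
\begin{itemize}
\item (T1)--(T3) follow from nestedness.
\item Each adhesion set consists of endpoints of a single cut, so it has size at most $2n$. This gives bounded adhesion.
\item $\operatorname{Aut}(\Gamma)_t$ acts on $\beta(t)$. Finitely many $\operatorname{Aut}(\Gamma)$-orbits of vertices, combined with finitely many orbits of tree vertices and finiteness of adhesion sets, give finitely many $\operatorname{Aut}(\Gamma)_t$-orbits on $\beta(t)$. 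This is property (2).
\end{itemize}

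\textbf{Step 3: connectivity and quasi-isometric embedding.} Connectedness of parts and property (4) come from one retraction argument. Take a path in $\Gamma$ between two vertices of $\beta(t)$. Any excursion leaves through a cut at $t$ and must return through the same cut, because the cuts are nested and the far side is a union of bags. Replace each excursion by a path inside $\beta(t)$ between the two cut endpoints. Such a path of bounded length exists because the cut has fewer than $n$ edges and there are finitely many orbits of cuts, so cut endpoints lie within uniformly bounded distance in $P_t$. Connectivity of $P_t$ near a cut needs the standard choice of $\mathcal{E}'$ with connected sides. Hence $d_{P_t}\le K\,d_\Gamma$ on $\beta(t)$. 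Injectivity on ends holds because a finite set separating two ends in $\Gamma$ restricts to one separating them in $P_t$; conversely, distinct rays in $P_t$ that are equivalent in $\Gamma$ could be rerouted into $P_t$ by the same retraction.

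\textbf{Step 4: the main obstacle, property (3).} Suppose two ends of $P_t$ are separated in $P_t$ by fewer than $n$ edges. I would lift this separation to $\Gamma$ by adding the edges of those cuts at $t$ that lie on the relevant side. This may exceed $n$, so the threshold needs care. I would instead take the cut system for a larger threshold $N$ depending on $n$ and the degree bound, following \cite[Thm.~2.15]{macmanus2023accessibility}. The lifted separation then has fewer than $N$ edges, so it would have been split by $\mathcal{E}'$, contradicting that both ends live at $t$. This step is where I would defer to the cited references rather than re-derive the bookkeeping.
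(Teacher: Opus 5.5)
Your overall route, reading the proposition off the Dicks--Dunwoody/Thomassen--Woess structure tree, is the paper's. The paper gives no argument beyond the citations. The problem is the details you add: Step 3 has a genuine gap, and Step 4 does not close.

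\textbf{Step 3.} Connectivity of $P_t$, the quasi-isometric embedding and injectivity on ends all rest on one claim. It says the endpoints of each cut at $t$ are joined by a bounded-length path inside $P_t$. That claim is what needs proof, and for the Step 2 bags it is false, even when every cut has connected infinite sides. Here is a counterexample.
\begin{enumerate}
\item Let $G=H_1*H_2$ with $H_1\cong H_2\cong\mathbb{Z}^2$, and build $\Gamma$ over the Bass--Serre tree.
\item Each coset $xH_1$ carries two disjoint square grids $\{a_y\}$ and $\{b_y\}$, for $y\in xH_1$.
\item Each coset $xH_2$ carries one square grid $\{c_{y,i}: y\in xH_2,\ i=0,1,2\}$, on which one generator of $H_2$ shifts by $3$ and the other by $1$.
\item For every $y\in G$, add the edges $a_yc_{y,0}$ and $b_yc_{y,2}$.
\end{enumerate}
This graph is connected, has degree at most $5$, and $G$ acts with five vertex orbits.

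Take $n=3$. The tight cuts with at most two edges and infinite sides are exactly the pairs $\{a_yc_{y,0},\,b_yc_{y,2}\}$. These are nested, both sides are connected, and the structure tree is the Bass--Serre tree.

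At a node $xH_1$, your bag is the two grids plus the pendant vertices $c_{y,0}$ and $c_{y,2}$. These pendants are pairwise non-adjacent: $c_{y,0}$ and $c_{y,2}$ are at distance two, and different $y$ give different grids. So $P_t$ has two one-ended components. In $\Gamma$ these are a single end, because infinitely many branches join the two grids. Hence (1) fails, and so does the injectivity in (4). Concretely, the excursion $a_y,c_{y,0},c_{y,1},c_{y,2},b_y$ cannot be rerouted inside $P_t$.

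To repair this, you must enlarge the bags. One option: for every cut at $t$, equivariantly add the finitely many geodesics in its far side joining the far endpoints of its cut edges. You then have to re-verify (T3), bounded adhesion and (2) for the new bags. Alternatively, quote a version of the construction that already outputs connected parts.

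\textbf{Step 4.} Adding the cut edges of all cuts at $t$ lying on one side gives an infinite set in general. The sensible lift puts each branch on the side containing its adhesion set, and pays only for branches whose adhesion set is split. But each split branch can cost up to $N-1$ edges. So the lifted separator is bounded in terms of $N$ itself, not in terms of $n$ and the degree. Choosing $N$ large therefore yields no contradiction, and the argument is circular as stated. If you defer (3), defer it to the reference rather than to this sketch.

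A smaller point on Step 1: finitely many orbits of cuts comes from tightness, since only finitely many tight cuts of bounded size contain a given edge. Bounded cardinality alone does not place a cut near a fundamental domain.
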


If $\Gamma$ is accessible, the same construction gives a canonical tree-decomposition with bounded adhesion, connected finite or one-ended parts, and finite quotient; see \cite[Sec.~7]{thomassen1993vertex} and \cite[Thm.~6.4, Prop.~4.1]{hamann2022stallings}.

\subsection{Near-embeddings and minor exclusion}\label{subsec:near-embeddings}

We now describe the conventions surrounding `near-embeddings' into compact surfaces, and their link to the structure theory of minor-excluded graphs.

Let $\Sigma$ be a compact connected surface with boundary cycles $C_1,\ldots,C_k$.
We will always fix, for each $1 \leq i \leq k$, a continuous parametrisation $c_i:[0,1]\to C_i$ which is injective except that $c_i(0)=c_i(1)$.
We call this common endpoint the \emph{root} of $C_i$. This induces a total order on $C_i\setminus\{c_i(0)\}$.
By an \emph{embedding} of a graph in $\Sigma$, we will mean a continuous injective map from its geometric realisation as a CW-complex into $\Sigma$ whose image meets $\partial\Sigma$ only in vertices and avoids the roots of the boundary cycles.

\begin{definition}[Near-embedding]\label{def:near-embedding}
Let $\Gamma$ be a graph and let $\Sigma$ be a compact connected surface with boundary cycles $C_1,\ldots,C_k$.
A \emph{near-embedding} of $\Gamma$ in $\Sigma$ consists of the following data:
\begin{enumerate}
\item A set $X\subseteq V(\Gamma)$ with $|X|\leq k$, called the \emph{deleted set}.
\item Subgraphs $\Gamma_0,\Gamma_1,\ldots,\Gamma_k\subseteq\Gamma-X$ whose union is $\Gamma-X$, with $\Gamma_1,\ldots,\Gamma_k$ pairwise vertex-disjoint.
\item An embedding of $\Gamma_0$ in $\Sigma$ such that, for each $i$, the set $U_i:=V(\Gamma_0)\cap V(\Gamma_i)$ is precisely $V(\Gamma_0)\cap C_i$, identifying $\Gamma_0$ with its image.
\item For each $i$, a linear decomposition $(U_i,\beta_i)$ of $\Gamma_i$ of width less than $k$, using the order on $U_i$ induced by $C_i$, and satisfying $u\in\beta_i(u)$ for every $u\in U_i$.
\end{enumerate}
The graph $\Gamma_0$ is the \emph{embedded part}, the graphs $\Gamma_1,\ldots,\Gamma_k$ are the \emph{vortices}, and the sets $U_1,\ldots,U_k$ are their \emph{societies}.
\end{definition}

This follows the convention set out by Diestel--Thomas in \cite{diestel1999excluding}. 
The use of the same integer $k$ to bound the number of boundary cycles, the size of the deleted set, and the widths of the vortices is purely a bookkeeping convention.
When the deleted set is empty, this is the infinite-graph analogue of the notion of an `almost-embedding' used by Grohe in  \cite{Grohe03}.

\begin{remark}
    We highlight that, under the conventions laid out in Definition~\ref{def:near-embedding}, the property of admitting a near-embedding into a fixed compact surface $\Sigma$ does not pass to subgraphs na\"ively. In particular, the subgraph in question may exclude some key vertices of the societies. It is, however, straightforward to `enlarge' our subgraph so that it inherits a near-embedding, and we will employ this trick a few times. See, for example,  Lemma~\ref{lem:finite-near-embedded-subgraph} below.
\end{remark}

We will make repeated use of the following basic fact. 

\begin{proposition}[{\cite[Lem.~2.3]{diestel1999excluding}}]\label{prop:near-embeddings-exclude-minors}
    For every compact connected surface $\Sigma$, there exists $N \geq 0$ such that for every graph $\Gamma$ which admits a near-embedding into $\Sigma$, we have $K_N \not\prec \Gamma$.
\end{proposition}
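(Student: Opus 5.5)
The plan is to convert a near-embedding into a bounded-size clique minor bound by replacing each vortex with a planar gadget, up to a bounded number of extra vertices, and then to use Euler genus bounds for graphs embedded in $\Sigma$.

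Fix $\Sigma$ with $k$ boundary cycles and Euler genus $g$. Let $\Gamma$ admit a near-embedding with deleted set $X$, embedded part $\Gamma_0$, and vortices $\Gamma_i$ with societies $U_i$ and linear decompositions $(U_i,\beta_i)$ of width less than $k$. Since minors of $K_\omega$-type are not at issue (we want a finite $N$), it suffices to show that every finite minor $K_n\prec\Gamma$ forces $n\le N$. Such a minor is witnessed by finitely many finite branch sets, so we may pass to a finite subgraph. By enlarging this subgraph suitably (adding the relevant society vertices, as in the Remark above), we may assume $\Gamma$ is finite and still near-embedded. Deleting $X$ costs at most $k$ in clique size, so we may also assume $X=\emptyset$.

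The key step is to bound the clique minors of $\Gamma_0\cup\Gamma_1\cup\dots\cup\Gamma_k$. For each vortex, the linear decomposition has bags of size at most $k$ indexed by the finitely many society vertices in cyclic order along $C_i$. Glue a disc onto $C_i$, draw a path $P_i$ inside that disc with one vertex $p_u$ near each $u\in U_i$, and join $p_u$ to $u$. A standard argument shows that $\Gamma$ is a minor of a graph obtained from an embedded graph in the closed surface $\hat\Sigma$ (genus $g$) by blowing up each $p_u$ into a clique of size at most $k$ and adding edges only between consecutive blow-ups. By (L3), each vertex of $\Gamma_i$ occupies an interval of bags, so these intervals can be contracted along the path. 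Equivalently, $\Gamma$ is a minor of the strong product of an $\hat\Sigma$-embedded graph with $K_k$, restricted to the vortex discs.

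For the clique bound, suppose $K_n\prec\Gamma$ with branch sets $B_1,\dots,B_n$. Choose a set $Y$ of vertices to delete by taking, for each vortex, one bag $\beta_i(u_i)$. This costs at most $k^2$ vertices. After the deletion, each vortex's linear decomposition is cut at one point and becomes a path-like strip in a disc. That strip can then be embedded, after contracting each vertex's interval, as a planar "ladder" of width $k$ in the disc. This is the main obstacle, because a width-$k$ path decomposition is not planar in general.

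I would therefore try the cruder route first. A finite graph of path-width less than $k$ has no $K_{k+1}$ minor, and the vortices interact with $\Gamma_0$ only through the cyclically ordered societies. Consider the branch sets meeting $\Gamma_0$ in only a few society vertices. Every branch set either lies entirely inside $\bigcup_i\Gamma_i$, and there are at most $k\cdot k$ such sets (path-width bound per vortex), or it meets $\Gamma_0$. Contract the vortex parts of the latter branch sets onto society vertices. Adjacencies realized inside a vortex between branch sets $B_a,B_b$ then become adjacencies realized through a common bag. Replace each vortex by a new vertex $v_i$ in the glued disc, adjacent to all of $U_i$, and delete at most $k\cdot(k+1)$ branch sets that use many bags. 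The remaining pairwise adjacencies can be routed through $v_i$ only if we accept a minor of a graph with bounded "apex over disc" structure. Euler's formula on $\hat\Sigma$ bounds the complete minors of embedded graphs by $O(\sqrt{g}+1)$, and each such vortex gadget raises this by $O(k)$. Hence $n\le N:=c(k+\sqrt{g}+1)\cdot k$ for an absolute constant $c$, giving $K_N\not\prec\Gamma$.
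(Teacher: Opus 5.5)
The paper does not prove this statement; it quotes it from Diestel--Thomas. Judged on its own, your sketch has a genuine gap. The step that carries the content of the lemma, namely that a vortex of width less than $k$ can only add a bounded amount to a clique minor, is never carried out.

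Your reductions are fine: choosing finite branch sets, enlarging to a finite near-embedded subgraph, discarding $X$ at the cost of at most $k$ branch sets, and noting that at most $k$ branch sets per vortex avoid $\Gamma_0$. It is also true that $\Gamma$ is a minor of your blown-up graph, with $P_i\boxtimes K_k$ attached along $U_i$. But that gadget is again a vortex of width $O(k)$ on the same cuff, so it only re-encodes the problem. The restriction to the vortex discs also cannot be dropped. For example, the $n\times n\times 2$ grid, a subgraph of $(P_n\square P_n)\boxtimes K_2$, has a $K_n$ minor: take row $a$ of one layer together with column $a$ of the other. You concede that the strip cannot be planarised, and that is where the first route stops.

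The `cruder route' does not close the gap, because its key moves do not preserve the minor.
\begin{itemize}
\item Contracting the vortex part of a branch set onto its society vertices identifies vertices of $\Gamma_0$ that may lie far apart on $C_i$. The result is then no longer embedded in $\hat\Sigma$.
\item Replacing $\Gamma_i$ by one vertex $v_i$ adjacent to $U_i$ keeps embeddability but destroys the model. The vertex $v_i$ can belong to only one branch set, so every other connection or adjacency realised inside the vortex is lost. `Routing adjacencies through $v_i$' is not a minor operation.
\item Deleting `at most $k(k+1)$ branch sets that use many bags' has no justification. The width bound says each bag meets at most $k$ branch sets; it does not say few branch sets meet many bags. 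All $n$ branch sets can occupy long, pairwise disjoint stretches of the vortex, and pairs of them can still be adjacent only through vortex edges lying in different bags.
\end{itemize}
So the bound $n\le c(k+\sqrt g+1)k$ is asserted rather than derived.

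What is easy is an edge count. Every finite subgraph $F$ of a near-embedded graph has at most $(2k+2)|V(F)|+3g$ edges: at most $k|V(F)|$ at $X$, at most $3|V(F)|+3g$ in $\Gamma_0$ by Euler's formula in $\hat\Sigma$, and at most $(k-1)|V(F\cap\Gamma_i)|$ in each vortex, the vortices being vertex-disjoint. This bounds cliques that occur as \emph{subgraphs}. The real work is passing from a clique \emph{minor} to such a situation, that is, controlling what contracting the branch sets does to the cuffs and the vortex decompositions. Your proposal does not address this.
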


Following \cite{diestel1999excluding}, we introduce the following shorthand. 

\begin{definition}[Plane graph with at most one vortex]\label{def:one-vortex}
A graph is a \emph{plane graph with at most one vortex} if it admits a near-embedding in a surface obtained from the sphere by deleting the interiors of finitely many pairwise disjoint discs, with at most one nonempty vortex.
\end{definition}

This definition requires the deleted set and vortex width to be finite, but places no uniform bound on them over the class of all such graphs.
The following theorem of Diestel--Thomas extends the structure theorem of Robertson--Seymour for finite graphs excluding a particular clique \cite{robertson2003graph}.

\begin{theorem}[{\cite[Thm.~3.2]{diestel1999excluding}}]\label{thm:diestel-thomas}
A graph excludes a countable clique if and only if it admits a tree-decomposition of finite adhesion in which every torso is a plane graph with at most one vortex.
\end{theorem}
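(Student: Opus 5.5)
Since this is the structure theorem of Diestel--Thomas, which we cite, I only sketch how the two directions would go. The backward direction is short and uses only results already stated. The forward direction carries the real content of \cite{diestel1999excluding}.

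\emph{Backward direction.} Suppose $(T,\beta)$ has finite adhesion and every torso is a plane graph with at most one vortex. Suppose for a contradiction that $\Gamma$ has a $K_\omega$ minor with pairwise disjoint connected branch sets $B_1,B_2,\ldots$, any two joined by an edge.

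Fix an edge $st\in E(T)$. Only finitely many branch sets meet the finite adhesion set $\beta(s)\cap\beta(t)$. Every other branch set lies in a single strict side. Two such branch sets on opposite strict sides cannot be adjacent. Hence all but finitely many branch sets lie in one strict side, and we orient $st$ towards that side.

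Suppose first the orientation has no sink. Following outgoing edges gives a ray $t_0,t_1,\ldots$ with $|S_i|\le a$ for infinitely many $i$, where $S_i:=\beta(t_i)\cap\beta(t_{i+1})$. Fix $a+1$ branch sets, and pick in each a vertex lying in a bag on the $t_0$-side. For large $i$, these vertices lie on the $t_i$-side of $t_it_{i+1}$. The orientation places infinitely many branch sets in the strict $t_{i+1}$-side, and each fixed branch set is adjacent to one of them. By connectedness, each fixed branch set meets $S_i$. This gives $a+1$ disjoint sets meeting a set of size at most $a$, a contradiction; the argument mirrors Lemma~\ref{lem:thick-end-carried}.

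So there is a sink $t$. We show that only finitely many $B_j$ avoid $\beta(t)$. A branch set avoiding $\beta(t)$ lies in the strict far side of some edge $ts$. Every other branch set is adjacent to it, so it either meets the adhesion set $\beta(t)\cap\beta(s)$ or lies in that far side. Only finitely many can do either, since $ts$ is oriented towards $t$, contradicting that the branch sets are infinitely many.

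Discarding these finitely many branch sets, the sets $B_j\cap\beta(t)$ are connected in the torso. Indeed, any excursion of $B_j$ out of $\beta(t)$ leaves and re-enters through one adhesion set, which is a clique in the torso, so it can be shortcut. The same shortcut argument preserves adjacency between branch sets. Thus the torso contains $K_\omega$, and hence every $K_N$. But the torso near-embeds in a sphere with holes, so by Proposition~\ref{prop:near-embeddings-exclude-minors} it excludes some $K_N$, a contradiction.

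\emph{Forward direction.} Given a graph excluding $K_\omega$, one must build the decomposition. The plan is as follows.
\begin{enumerate}
\item Consider the highly connected ``directions'' of the graph: tangles of infinite order, which are essentially thick ends and similar objects.
\item Argue that near each such direction some finite clique $K_n$ is excluded. Otherwise a compactness argument would assemble a $K_\omega$ minor.
\item Apply the Robertson--Seymour structure theorem relative to a tangle \cite{robertson2003graph} to finite subgraphs, obtaining near-embeddings of bounded parameters.
\item Pass to a limit by a K\H{o}nig-type compactness argument to get a near-embedding of the whole region. The genus is then traded for planarity plus one vortex by cutting along the finitely many handles and crosscaps into the deleted set and the tree structure.
\item Glue the pieces for different directions along a tree of finite separations.
\end{enumerate}

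The main obstacle is exactly this compactness and gluing. The local parameters (deleted set size, vortex width) are not uniform, yet the adhesion must be finite along every ray of $T$. The condition $\liminf$ rather than a uniform bound is what makes it possible: along any ray the separations can be chosen to stabilise at some end of bounded vertex degree.
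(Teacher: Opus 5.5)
The paper does not prove this statement. It is imported from Diestel--Thomas \cite{diestel1999excluding}, and only the forward direction is ever used (in Theorem~\ref{thm:one-ended-rigidity} and Lemma~\ref{prop:inaccessible-almost-planar-part}).

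Your backward direction is a complete and correct argument, and it goes beyond what the paper supplies. It is the proof of Lemma~\ref{lem:thick-end-carried} with branch sets in place of rays, followed by shortcutting through the adhesion cliques of the sink torso. Two small points:
\begin{enumerate}
\item At the sink $t$ your argument shows that \emph{no} branch set avoids $\beta(t)$, not merely finitely many. This is exactly what your terse adjacency step needs. Suppose $x\in B_j$ and $y\in B_k$ are adjacent with $x\notin\beta(t)$, say $x$ in the strict far side of $ts$. Since both $B_j$ and $B_k$ meet $\beta(t)$, connectedness forces both to meet $\beta(t)\cap\beta(s)$, which is a clique in the torso.
\item You only use that each torso excludes some finite clique, via Proposition~\ref{prop:near-embeddings-exclude-minors}. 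So you have in fact shown that any finite-adhesion tree-decomposition whose torsos each exclude a finite clique forces $K_\omega\nprec\Gamma$.
\end{enumerate}

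The forward direction, which is the one the paper actually needs, is not proved by your proposal. Its steps are named rather than carried out, and the hard ones are precisely the content of \cite{diestel1999excluding}. Concretely:
\begin{enumerate}
\item Step (2) is the genuinely infinitary ingredient, and ``a compactness argument'' does not deliver it. An infinite graph can have $K_n$ minors for every $n$ without having a $K_\omega$ minor, for example a ray with a copy of $K_n$ attached at its $n$-th vertex. So one must show that $K_n$ minors governed by a single infinite-order tangle can be linked together into one $K_\omega$ minor, and that requires an actual construction.
\item Step (4) addresses only genus. The Robertson--Seymour theorem \cite{robertson2003graph} yields boundedly many vortices, and you give no argument that all but one can be eliminated.
\item Even for genus, removing it from an infinite embedded graph by finitely many deletions is nontrivial; compare Lemma~\ref{lem:finite-planarising-set}, which needs an external theorem.
\item Passing from compatible near-embeddings of finite subgraphs to a near-embedding of a whole, possibly non-locally-finite or uncountable, region in a compact surface is not addressed.
\item Step (5), achieving finite adhesion along every ray when the local parameters are non-uniform, is asserted rather than argued.
\end{enumerate}

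So, exactly as in the paper, the forward direction rests entirely on the citation. That is acceptable here, but you should present it as a citation, not as a sketch whose steps are routine.
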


Here finite adhesion has the meaning specified in Definition~\ref{def:finite-adhesion}.
For exclusion of a fixed finite clique, the stronger structure theorem gives uniform bounds throughout the decomposition.

\begin{theorem}[{\cite[Thm.~3.1, Lem.~2.3]{diestel1999excluding}}]\label{thm:finite-clique-structure}
    For every integer $N \geq 5$, there is a finite collection $\mathcal S$ of compact connected surfaces such that every countable graph excluding $K_N$ admits a tree-decomposition of bounded adhesion in which every torso admits a near-embedding in some member of $\mathcal S$.
\end{theorem}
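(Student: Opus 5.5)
Since this is the Diestel--Thomas extension of the Robertson--Seymour structure theorem, the plan is to take the finite theorem as the base case and pass to countable graphs by a compactness argument. The finite input is the Robertson--Seymour structure theorem \cite{robertson2003graph} in its explicit form. For every $N\geq 5$ there is an integer $k=k(N)$ such that every \emph{finite} graph excluding $K_N$ has a tree-decomposition of adhesion at most $k$. Moreover, every torso admits a near-embedding (in the sense of Definition~\ref{def:near-embedding}, with the bookkeeping parameter $k$) into a surface $\Sigma$ with at most $k$ boundary cycles, into which $K_N$ does not embed. Compact connected surfaces are classified by orientability, genus and number of boundary components. Bounded Euler genus together with at most $k$ boundary cycles therefore leaves only finitely many surfaces. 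This gives the finite collection $\mathcal S$, and Proposition~\ref{prop:near-embeddings-exclude-minors} confirms that this collection is compatible with exclusion of $K_N$.

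For a countable graph $\Gamma$ excluding $K_N$, enumerate $V(\Gamma)=\{v_1,v_2,\ldots\}$ and set $\Gamma_n:=\Gamma[\{v_1,\ldots,v_n\}]$. Each $\Gamma_n$ excludes $K_N$ and so has a \emph{good} decomposition: adhesion at most $k$ and torsos near-embedded in members of $\mathcal S$. Restricting a good decomposition of $\Gamma_n$ to $\Gamma_m$ ($m<n$) means:
\begin{itemize}
\item intersect every bag with $V(\Gamma_m)$;
\item discard bags that become empty or are contained in a neighbouring bag;
\item suppress degree-two tree vertices whose two adhesion sets coincide.
\end{itemize}
The result is a tree-decomposition of $\Gamma_m$ with adhesion at most $k$ and at most $2^m$ or so essentially distinct bags. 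Up to the obvious equivalence there are then only finitely many such restricted objects for each $m$, each recorded together with a restricted near-embedding datum. Let $\mathcal D_m$ be the finite set of restrictions to $\Gamma_m$ of good decompositions of some $\Gamma_n$ with $n\geq m$. These sets form an inverse system of nonempty finite sets under restriction, so König's lemma yields a compatible sequence $D_1,D_2,\ldots$. The union of this nested sequence is a tree-decomposition of $\Gamma$ whose adhesion is still at most $k$, since each adhesion set is a finite increasing union of sets of size at most $k$, and hence bounded.

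It remains to control the torsos in the limit. Here I would use that a near-embedding of a countable graph in a fixed compact $\Sigma$ is determined by finitary data: the deleted set, the assignment of vertices to the embedded part and to vortices, the linear vortex decompositions, and a combinatorial description of the embedding of $\Gamma_0$ via rotation systems and face boundaries. Each such datum is checked on finite subgraphs. So if every finite piece of a torso carries compatible near-embeddings into the same $\Sigma$, another König argument produces a near-embedding of the whole torso.

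\textbf{Main obstacle.} I expect the hardest step to be making restriction actually preserve near-embeddability, with the near-embedding data included in the König argument. As the Remark after Definition~\ref{def:near-embedding} warns, deleting vertices can destroy a near-embedding, for instance by removing society vertices. Restriction also changes torsos, because new adhesion cliques appear. My first attempt would be the `enlargement' trick. Instead of restricting to $\Gamma_m$ exactly, restrict to a bounded enlargement of $\Gamma_m$ inside each bag: add back the society vertices and deleted-set vertices adjacent to it. This keeps everything finite for fixed $m$. I would then check that the torso of the restricted decomposition is a minor-closed image of the original torso obtained by contracting within faces, which preserves near-embeddings into the same surface.
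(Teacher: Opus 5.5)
Your outline is the natural route: the finite Robertson--Seymour theorem, finiteness of $\mathcal S$ from bounded Euler genus and boundedly many cuffs, then compactness to reach countable graphs. The paper itself does not prove this statement; it imports it from Diestel--Thomas. There, Proposition~\ref{prop:near-embeddings-exclude-minors} is what bounds the adhesion, since every adhesion set is a clique in an incident torso.

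\textbf{The main gap: the K\"onig limit need not be a tree.} Your restriction merges bags, so the tree of $D_m$ is a \emph{contraction} of the tree of $D_{m+1}$, not a subtree. A K\"onig branch is therefore an inverse system of finite trees under contractions, and nothing makes its limit a tree. For example:
\begin{itemize}
\item Let $\Gamma$ be a ray $xw_1w_2\cdots$ with a pendant vertex $y$ at $x$, enumerated $x,y,w_1,w_2,\ldots$.
\item For $m\geq 4$, let $D_m$ be the path-decomposition of $\Gamma_m$ with bags $\{x,w_i,w_{i+1}\}$ for $1\leq i\leq m-3$, followed by $\{x,y\}$.
\item Each $D_m$ is good: adhesion is at most $2$, and the torsos are triangles or an edge.
\item No bag lies in a neighbouring bag, no suppression applies, and restricting $D_{m+1}$ to $\Gamma_m$ returns exactly $D_m$.
\end{itemize}
So this is a legitimate branch. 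Yet the bags $\{x,w_1,w_2\}$ and $\{x,y\}$ are at tree-distance $m-3\to\infty$. The limit is indexed by the order type $\omega+1$ and is not a tree-decomposition. Bags along a thread of nodes are not even monotone, so your \emph{union} is not well defined either. What is missing is a genuine compactness theorem for bounded-adhesion tree-decompositions of countable graphs whose torsos lie in a subgraph-closed class, of the kind K\v{r}\'{\i}\v{z} and Thomas proved for clique-sums. K\"onig's lemma applied to decompositions does not by itself produce a tree.

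\textbf{The torso side is also unresolved, and your proposed repair points the wrong way.} After restriction and discarding, each new torso is a \emph{subgraph} of an old torso. Indeed, an adhesion set created by merging $t$ into $s$ lies in $\beta(s)\cap\beta(t)$, which is already a clique at $s$. So no contraction within faces is involved. The real problem is exactly that near-embeddability in the sense of Definition~\ref{def:near-embedding} is not subgraph-closed. Enlarging $\Gamma_m$ differently inside different bags does not yield a decomposition of a single graph, so it cannot be fed into K\"onig.

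The cleaner route has two steps. First, run the compactness with the subgraph-closed class of subgraphs of graphs near-embeddable in a member of $\mathcal S$. Second, prove separately that a countable graph all of whose finite subgraphs lie in this class for one fixed $\Sigma$ (by pigeonhole over the finite $\mathcal S$) itself admits a near-embedding in $\Sigma$. That second step is more than bookkeeping of finitary data. A rotation system of an infinite graph does not by itself give an embedding in a compact surface, so you must realise the limit topologically, respecting the order of the society vertices along each cuff. You must also choose the deleted set, the vortex assignment and the linear decompositions consistently, without keeping auxiliary society vertices that do not belong to the torso.
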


We now state Grohe's theorem on local tree-width, which will play an important role in our proofs.

\begin{definition}[Local tree-width]\label{def:local-tree-width}
The \emph{local tree-width function} of a graph $\Gamma$ is defined by 
$$
\operatorname{ltw}_\Gamma(r):=\sup_{v\in V(\Gamma)}\operatorname{tw}(\Gamma[B_\Gamma(v,r)])
$$
for $r\in\mathbb{N}$.
A class $\mathcal{C}$ of graphs has \emph{linear local tree-width} if there is a constant $c$ such that $\operatorname{ltw}_\Gamma(r)\leq cr$ for every $\Gamma\in\mathcal{C}$ and every integer $r\geq1$.
\end{definition}

\begin{theorem}[{\cite[Prop.~3.8]{Grohe03}}]\label{thm:grohe-local-tree-width}
Let $\Sigma$ be a compact connected surface with boundary.
The class of finite graphs admitting a near-embedding in $\Sigma$ with empty deleted set has linear local tree-width.
\end{theorem}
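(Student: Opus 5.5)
The plan is to reduce to the classical fact that graphs embedded in a fixed compact surface have linear local tree-width, and to absorb the vortices by a substitution in tree-decompositions. Fix $\Sigma$, with genus data and $k$ boundary cycles, and a finite graph $\Gamma$ with a near-embedding in $\Sigma$ whose deleted set is empty. Let $\Gamma_0$ be the embedded part and $\Gamma_1,\dots,\Gamma_k$ the vortices with societies $U_i$ and linear decompositions $(U_i,\beta_i)$ of width $<k$.

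\textbf{Step 1 (surface graph).} Cap each boundary cycle $C_i$ by a disc to get a closed surface $\hat\Sigma$ whose Euler genus depends only on $\Sigma$. Inside the cap of $C_i$, place a path $b^i_{u_1},\dots,b^i_{u_m}$ of new vertices, one for each $u\in U_i$ in the boundary order. Join each $b^i_u$ to $u$, and if $U_i\neq\emptyset$ add a centre vertex $z_i$ adjacent to every $b^i_u$. All of this embeds in the cap, since the $u$'s appear on $C_i$ in this order. Call the resulting graph $G^\ast$; it is embedded in $\hat\Sigma$. By Eppstein's diameter--tree-width theorem, in the linear form obtained from BFS layering, there is $c=c(\hat\Sigma)$ such that every connected subgraph of $G^\ast$ of radius $\le R$ has tree-width $\le cR$.

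\textbf{Step 2 (substitution).} Let $Y\subseteq G^\ast$ be connected and let $(T,\gamma)$ be a tree-decomposition of $G^\ast[Y]$. Replace every occurrence of $b^i_u$ in a bag by $\beta_i(u)$. By (L3), each vortex vertex $w$ occurs in $\beta_i(u)$ for $u$ ranging over an interval of $U_i$. The corresponding $b^i_u$ form a subpath of $G^\ast$, so the union of their subtrees is connected, and (T3) holds. Every vortex edge lies in some $\beta_i(u)$, so (T2) holds. Thus we get a tree-decomposition of the part of $\Gamma$ covered by $Y$, of width at most $k(\operatorname{width}(T,\gamma)+1)$.

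\textbf{Step 3 (radius control).} Fix $v\in V(\Gamma)$ and $r\ge1$, and let $Y$ consist of $B_\Gamma(v,r)\cap V(\Gamma_0)$, the centres $z_i$, and those $b^i_u$ with $\beta_i(u)\cap B_\Gamma(v,r)\neq\emptyset$. The key claim is that every vertex of $Y$ lies within distance $O(r)$ of a chosen root in $G^\ast[Y]$. Take a path of length $\le r$ in $\Gamma$ and consider its maximal vortex subpaths. Such a subpath enters and leaves $\Gamma_i$ through society vertices $u,u'$. In $G^\ast$ it can be shortcut by $u\,b^i_u\,z_i\,b^i_{u'}\,u'$, which has length $4$. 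A vortex vertex $w$ in the ball sits in some $\beta_i(u)$ with $b^i_u\in Y$, adjacent to $z_i$. We also have to deal with a vortex segment that never meets $\Gamma_0$; in that case the whole ball lies in one vortex, and its tree-width is already $<k$. With this, $G^\ast[Y]$ has radius at most $4r+O(1)$, hence tree-width $\le c(4r+O(1))$. Step 2 then gives
\[
\operatorname{tw}\bigl(\Gamma[B_\Gamma(v,r)]\bigr)\le k\bigl(c(4r+O(1))+1\bigr)=O(r),
\]
uniformly over the class.

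The main obstacle is Step 3. The naive approach of placing each vortex vertex at a single society vertex fails, because a vortex edge can join vertices whose bag-intervals are far apart along $C_i$. The centre vertices $z_i$ are the device that collapses each vortex to bounded diameter while keeping embeddability. I would check carefully that adding $z_i$ does not interfere with Step 2: $z_i$ is simply left in the bags, contributing one vertex each, and there are at most $k$ of them. I would also check that the induced subgraph $G^\ast[Y]$, and not merely $G^\ast$, is connected with the claimed radius. If this fails, I would instead work with the contraction of $B_{G^\ast}(\text{root},4r-1)$ to a single vertex, as in Grohe's original argument.
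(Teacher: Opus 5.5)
The paper gives no proof of this statement. It is quoted from Grohe and used only as a black box in Lemma~\ref{lem:dense-vortex-tree-width}, so there is no in-paper argument to match. Your proposal is a correct, self-contained reduction to the bounded-genus case, up to two small repairs below.

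\textbf{What checks out.} The capping construction embeds in $\hat\Sigma$. This is because the society $U_i$ is exactly $V(\Gamma_0)\cap C_i$, in the order of the cuff, and $\Gamma_0$ meets $\partial\Sigma$ only in vertices, so the capping disc is free.

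Put $B:=B_\Gamma(v,r)$. The radius claim holds in $G^*[Y]$ itself, not merely in $G^*$. A society vertex $u\in B$ satisfies $u\in\beta_i(u)\cap B$, which forces $b^i_u\in Y$. So every shortcut $u\,b^i_u\,z_i\,b^i_{u'}\,u'$ stays inside $Y$. Root at $v$ if $v\in V(\Gamma_0)$, and at $z_i$ if $v$ is an interior vertex of $\Gamma_i$.
\begin{itemize}
\item Each $x\in B\cap V(\Gamma_0)$ lies within $4d_\Gamma(v,x)+2$ of the root.
\item Each included $z_j$ is reached through a society vertex of $B\cap U_j$, or is the root.
\item Each $b^j_u\in Y$ is adjacent to $z_j$.
\end{itemize}
So the radius is $4r+O(1)$, and the contraction fallback is unnecessary.

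\textbf{Repairs needed.}
\begin{enumerate}
\item Step~2 is false for an arbitrary connected $Y$. The indices $u$ with $w\in\beta_i(u)$ form an interval, but only those $b^i_u$ lying in $Y$ carry subtrees, and a gap breaks (T3). For example, take $Y=\{b^i_{u_1},z_i,b^i_{u_3}\}$ with $u_1<u_2<u_3$, and the path decomposition with bags $\{b^i_{u_1},z_i\}$, $\{z_i\}$, $\{z_i,b^i_{u_3}\}$. If $w\in\beta_i(u_1)\cap\beta_i(u_3)$, then $w$ lands in the two end bags only.

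The fix is to substitute $\beta_i(u)\cap B$ rather than $\beta_i(u)$, and to delete the $z_i$ from the bags. Then, for $w\in B$, every $u$ with $w\in\beta_i(u)$ has $b^i_u\in Y$ by the definition of $Y$. Hence these $b^i_u$ form a subpath of $G^*[Y]$. For a society vertex $u\in B$, its own subtree meets that of $b^i_u$ through the edge $ub^i_u$. Edges of $\Gamma_i$ inside $B$ are covered because their witnessing index $u$ has $b^i_u\in Y$.
\item Put $z_i$ into $Y$ only when some $b^i_u\in Y$. Otherwise $z_i$ is isolated and $G^*[Y]$ is disconnected.
\end{enumerate}
With these changes, each bag grows by a factor of at most $k$, so $\operatorname{tw}(\Gamma[B])=O(r)$ with constants depending only on $\Sigma$.

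\textbf{Relation to the paper.} Your centre vertices are the same coning device the paper uses in the proof of Lemma~\ref{lem:dense-vortex-tree-width}, but deployed differently.
\begin{itemize}
\item The paper keeps the cone vertex inside the vortex, adding it to every bag and paying with one extra empty cuff. It then obtains the linear bound from Grohe.
\item You place the cone, together with a path $b^i_{u_1}\cdots b^i_{u_m}$ parallel to the cuff, in a capping disc. The auxiliary graph is then genuinely embedded in a closed surface, and only Eppstein's diameter--tree-width theorem in its radius (layered) form is needed.
\end{itemize}
The parallel path is what lets the bag substitution satisfy (T3) without any adjacency among society vertices. What your route buys is a short, explicit proof of the imported theorem from the purely surface-embedded case.
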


The careful reader might note that this result pertains to finite graphs.
The following lemma will allow us to apply this theorem to the infinite graphs considered in this paper.

\begin{lemma}\label{lem:finite-near-embedded-subgraph}
Let $\Gamma$ admit a near-embedding in a compact connected surface $\Sigma$ with empty deleted set.
Every finite subgraph of $\Gamma$ is contained in a finite subgraph admitting a near-embedding in $\Sigma$ with empty deleted set.
\end{lemma}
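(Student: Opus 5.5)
The plan is to enlarge the given finite subgraph $F\subseteq\Gamma$ to a finite subgraph $F'$ which is closed under the operations needed to inherit the near-embedding. We then restrict the near-embedding data of $\Gamma$ to $F'$. Let the near-embedding of $\Gamma$ consist of the embedded part $\Gamma_0$, the vortices $\Gamma_1,\ldots,\Gamma_k$, the societies $U_i$ and the linear decompositions $(U_i,\beta_i)$; the deleted set is empty. The only obstruction to restricting naively is condition (4). Vertices of a vortex lying in $F$ may sit in bags $\beta_i(u)$ whose index $u\in U_i$ is not in $F$. Moreover, the restricted decomposition must be indexed by $V(F'_0)\cap C_i$ and must satisfy $u\in\beta_i(u)$. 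So the enlargement step is the heart of the matter.

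\textbf{Construction of $F'$.} First let $W_0:=V(F)$. For each $i$ and each vertex $x\in W_0\cap V(\Gamma_i)$, choose one index $u_x\in U_i$ with $x\in\beta_i(u_x)$; such an index exists by (L1). Let $W_1$ be $W_0$ together with all the chosen $u_x$. The indices that will actually be used form the finite set $U_i':=W_1\cap U_i$. Put $F'_0:=\Gamma_0[W_1\cap V(\Gamma_0)]\cup(F\cap\Gamma_0)$, restricting the embedding of $\Gamma_0$. Add to $F'_0$ as isolated vertices the points of $U_i'$ not already there; this is fine because $U_i\subseteq V(\Gamma_0)$. Then $V(F'_0)\cap C_i=U_i'$ exactly, because $W_1\cap V(\Gamma_0)\cap C_i=W_1\cap U_i$. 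For the vortices, let $F'_i$ be the subgraph of $\Gamma_i$ with vertex set $W_1\cap V(\Gamma_i)$ and edge set $E(F)\cap E(\Gamma_i)$. Finally set $F':=F'_0\cup\bigcup_i F'_i$. This is a finite subgraph of $\Gamma$ containing $F$, since every edge of $F$ lies in some $\Gamma_j$ and $\bigcup_j\Gamma_j=\Gamma$. Note also that $V(F'_0)\cap V(F'_i)=U_i'$: any vertex of $\Gamma_0$ lying in $\Gamma_i$ is in $U_i$.

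\textbf{The new linear decompositions.} For $u\in U_i'$ define $\beta_i'(u):=\beta_i(u)\cap V(F'_i)$, ordered by the order inherited from $C_i$. Each condition then reduces to the corresponding one for $\Gamma$.
\begin{enumerate}
\item[(L3)] This is inherited, since we only restrict the index set and intersect the bags.
\item[Width] The width does not increase, so it stays below $k$.
\item[$u\in\beta'_i(u)$] This holds because $u\in U_i'\subseteq V(F'_i)$.
\item[(L1)] Take $x\in V(F'_i)$. If $x$ came from $W_0$, then its chosen index $u_x$ lies in $U_i'$. If $x$ was added as some $u_y$, then $x\in U_i'$ and $x\in\beta_i(x)$.
\item[(L2)] This is the delicate step. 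An edge $xy$ of $F'_i$ lies in some bag $\beta_i(u)$, but $u$ need not be in $U_i'$. To repair this, I would also add to $W_1$ one witnessing index $u_{xy}$ for each edge of $F$ in $\Gamma_i$, and add that index to $U_i'$ and to $F'_i$. This keeps everything finite.
\end{enumerate}
The added indices create no further obligations: each is a single vertex of $U_i$, so it is covered by its own bag, and no new edges are added. The process therefore terminates after one round.

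The main obstacle I expect is exactly this bookkeeping around condition (3) and (L2). We must make the societies of the restricted near-embedding equal to the embedded vertices on $C_i$. We must also make every retained vortex edge and vertex witnessed by a retained index. Once the closure step is done in the right order (vertex witnesses and edge witnesses together, then intersecting bags), every remaining property is a routine inheritance from the near-embedding of $\Gamma$.
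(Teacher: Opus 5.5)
Your proposal is correct and is essentially the paper's argument. Both enlarge $F$ by finitely many society vertices: $V(F)\cap U_i$ together with indices witnessing each vertex and edge of $F\cap\Gamma_i$. Both then take these as the new societies and intersect the original bags with the new vortex vertex sets, after which (L1)--(L3), the width bound, $u\in\beta_i'(u)$ and condition (3) are all inherited.

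The only differences are cosmetic. You use an induced subgraph of $\Gamma_0$ where the paper simply adds the new society vertices to $F\cap\Gamma_0$. You also bring in the edge witnesses at the (L2) step rather than from the outset.
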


\begin{proof}
Let $F\subseteq\Gamma$ be finite, and fix the given near-embedding.
For each vortex $\Gamma_i$, choose a finite set $I_i\subseteq U_i$ containing $V(F)\cap U_i$ and indices of bags witnessing every vertex and edge of $F\cap\Gamma_i$.
Let $W_i:=(V(F)\cap V(\Gamma_i))\cup I_i$, and form a finite graph $\Gamma_i'$ from $F\cap\Gamma_i$ by adding the vertices of $I_i$.
The bags $\beta_i(u)\cap W_i$, indexed by $u\in I_i$, give a linear decomposition of $\Gamma_i'$ of the required width, and each bag contains its index.
Let $\Gamma_0'$ be $F\cap\Gamma_0$ together with the vertices in the sets $I_i$, with its inherited embedding in $\Sigma$.
Then $\Gamma_0'\cup\Gamma_1'\cup\cdots\cup\Gamma_k'$ is a finite subgraph of $\Gamma$ containing $F$, and these data give the required near-embedding with societies $I_i$.
\end{proof}

\section{Escaping the vortices}\label{sec:escaping-vortex}

In this section we consider graphs admitting regular maps or quasi-isometries to (subgraphs of) graphs which themselves admit near-embeddings into compact surfaces. The goal is to settle the case where the Diestel--Thomas tree-decomposition given by Theorem~\ref{thm:diestel-thomas} or \ref{thm:finite-clique-structure} is trivial. More precisely, we will prove the following. 

\begin{theorem}\label{thm:escaping-vortex}
Let $\Gamma$ be a connected, locally finite, quasi-transitive graph, and let $\Pi$ be a graph admitting a near-embedding in a compact connected surface.
\begin{enumerate}
\item If $\Gamma$ admits a regular map into a connected bounded-degree subgraph of $\Pi$, then $\Gamma$ is almost planar.
\item If $\Gamma$ is quasi-isometric to a connected bounded-degree subgraph of $\Pi$, then $\Gamma$ is quasi-planar. 
\end{enumerate}
\end{theorem}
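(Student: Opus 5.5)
The plan is to first get rid of the deleted set by a pointed-limit argument using quasi-transitivity, and then to get rid of the vortices and the genus of $\Sigma$ by a drawing argument. Fix the near-embedding of $\Pi$ in $\Sigma$ with parameter $k$, deleted set $X$, embedded part $\Pi_0$ and vortices $\Pi_1,\ldots,\Pi_k$. Let $H\subseteq\Pi$ be the connected bounded-degree subgraph and $f:\Gamma\to H$ the $\kappa$-regular map (respectively the quasi-isometry). By Lemma~\ref{lem:regular-connected-image} we may assume $f$ is coarsely surjective.

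\textbf{Step 1: removing the deleted set.} Since $H$ has bounded degree and $f$ is bounded-to-one, the set $f^{-1}(B_H(X\cap V(H),R))$ is finite for every $R$. Quasi-transitivity then gives, for each $n$, a vertex $v_n$ with $f(B_\Gamma(v_n,n))$ at distance more than $n$ from $X$. Translating by automorphisms into a fixed finite set of orbit representatives, we consider the finite labelled data consisting of:
\begin{itemize}
\item the ball $B_\Gamma(v_n,n)$;
\item its image under $f$, together with the subgraph of $\Pi-X$ spanned by the $\kappa$-paths joining images of adjacent vertices;
\item the finite near-embedding with empty deleted set induced on that subgraph, obtained via the enlargement of Lemma~\ref{lem:finite-near-embedded-subgraph}.
\end{itemize}
Bounded degree and the bound on fibres make each radius-$r$ piece range over finitely many isomorphism types, where the combinatorial type of the embedding in the fixed surface $\Sigma$ and the vortex decompositions of width $<k$ are also finite data. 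A diagonal (K\H{o}nig-type) argument should therefore produce a regular map (respectively quasi-isometric embedding) $f_\infty:\Gamma\to\Pi_\infty$. Here $\Pi_\infty$ is a bounded-degree graph near-embedded in $\Sigma$ with empty deleted set, and the constants are unchanged. One point to check in the quasi-isometric case is that coarse surjectivity survives the limit; it does, because it is a local condition around the image.

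\textbf{Step 2: removing vortices and genus.} I would next show that a connected bounded-degree graph near-embedded in $\Sigma$ with empty deleted set is almost planar. This is needed only for its bounded-degree subgraphs, which I again enlarge via Lemma~\ref{lem:finite-near-embedded-subgraph}.
\begin{itemize}
\item \emph{Vortices.} Since each vortex has a linear decomposition of width $<k$ indexed along its society cycle, draw the vortex inside the capping disc. Place each vertex near the interval of indices of the bags containing it. By (L3) these intervals are convex, so each edge crosses only edges whose intervals overlap its own, and there are boundedly many of those given width $<k$ and bounded degree.
\item \emph{Genus.} Use Theorem~\ref{thm:grohe-local-tree-width}, so that balls of radius $r$ have tree-width at most $cr$. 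Cut $\Sigma$ along finitely many curves to reduce it to a disc, and re-route edges crossing the cuts. This is where I expect trouble, since the cuts may meet infinitely many edges in an infinite graph.
\end{itemize}
To handle this I would apply quasi-transitivity a second time. Repeat the limit of Step 1, now choosing basepoints whose images have large balls avoiding a fixed finite collection of non-separating curves of $\Sigma$ that meet $\Pi_\infty$ minimally. Each handle can meet a large ball in only a bounded region, or else Grohe's bound pulled back through $f$ would force local tree-width in $\Gamma$ inconsistent with its thick ends. Inducting on the Euler genus of $\Sigma$ then reduces to the sphere with holes.

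\textbf{Conclusion and main obstacle.} With Steps 1 and 2, part~(1) follows from Theorem~\ref{thm:almost-planar-regular} composed with $f_\infty$. For part~(2), the same limit applied to quasi-isometries yields $\Gamma\simeq_{\mathrm{QI}}$ a subgraph of a plane graph with at most one vortex of bounded width. Here I would collapse each vortex bag to a single vertex placed on the boundary cycle. Bounded width makes this a quasi-isometry onto a planar graph obtained by adding a face-path, which would give quasi-planarity. The main obstacle is the genus and vortex elimination in Step 2, in particular making the genus-reduction limit rigorous. I expect it to use the linear local tree-width of Theorem~\ref{thm:grohe-local-tree-width} together with quasi-transitivity in an essential way.
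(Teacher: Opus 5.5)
There is a genuine gap in Step~2 and in your treatment of part~(2). You try to prove statements about \emph{arbitrary} bounded-degree graphs near-embedded in $\Sigma$, with quasi-transitivity playing no role, and for part~(2) such a statement is false. Any graph with a finite-width path-decomposition indexed by $\mathbb{N}$ is a plane graph with a single vortex (Definition~\ref{def:one-vortex}): take an edgeless embedded part consisting of all vertices, placed on one cuff in the order in which a refined path-decomposition introduces them. An example is a ray-like chain of copies of $K_{3,3}$ whose edges are subdivided $1,2,3,\ldots$ times. This graph has $K$-fat $K_{3,3}$ minors for every $K$. Fat minors survive quasi-isometries and planar graphs have no $K_{3,3}$ minor, so it is not quasi-planar.

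Your collapse fails concretely in three ways. Consecutive society vertices may be arbitrarily far apart in $H$, so the added face-path creates unbounded shortcuts. A vertex may occupy a long run of bags, so collapsing it to one society vertex is not Lipschitz. The same long runs break your crossing count for the vortex drawing, since nothing bounds the number of intervals overlapping a long one.

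What is missing is the dichotomy by which quasi-transitivity avoids the vortices altogether. Let $W$ be the set of vertices of $H$ lying in vortices.
\begin{itemize}
\item If $W$ is $R$-coarsely dense in $H$, cone off each of the $q$ vortices; this costs one extra empty cuff and raises the width by one. Every finite connected $F\subseteq H$ then lies in a ball of radius at most $q(2R+3)$ of the coned graph. Theorem~\ref{thm:grohe-local-tree-width}, applied via Lemma~\ref{lem:finite-near-embedded-subgraph}, then bounds $\operatorname{tw}(F)$ uniformly. Hence $H$ has finite tree-width and only thin ends. So $\Gamma$ has no thick end by Lemma~\ref{lem:regular-map-on-ends}, and is a quasi-tree by Theorem~\ref{lem:thin-ended-almost-planar}.
\item If $W$ is not coarsely dense, you escape it by recentring and taking a pointed limit, exactly as you do for $X$.
\end{itemize}
Thus Grohe's theorem is the tool for the vortices, not for the genus.

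Your genus step does not work either. Theorem~\ref{thm:grohe-local-tree-width} is only an upper bound, and linear local tree-width is compatible with thick ends (the square grid has both). So there is no contradiction forcing handles away from large balls, and the problem you flagged, that cutting curves may meet infinitely many edges, remains unresolved. The missing input is Lemma~\ref{lem:finite-planarising-set}: a countable graph embedded in a compact surface becomes planar after deleting a finite vertex set $Z$. This comes from Georgakopoulos's decomposition into finitely many planar pieces with finite pairwise intersections.

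In the non-dense case you then choose $u_n=f(x_n)$ with $d_H\bigl(u_n,W\cup((X\cup Z)\cap V(H))\bigr)\to\infty$, which is possible since $X$ and $Z$ are finite. For each fixed $r$, the $r$-balls about $u_n$ eventually lie in the planar graph $(H\cap\Pi_0)-Z$. So the pointed limit $H_\infty$ of $(H,u_n)$ is planar by compactness of planarity, and Proposition~\ref{prop:pointed-limits-maps} passes the regular map or quasi-isometry to $H_\infty$.

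This also makes your Step~1 limit of near-embeddings unnecessary. You asserted rather than proved its compactness claim, which concerns embeddings in $\Sigma$ together with ordered societies and vortex decompositions. Only planarity of the limit has to be checked, and that property is local.
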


We begin with the following lemma, which applies Grohe's theorem on local tree-width (\ref{thm:grohe-local-tree-width}) to show that, if a subgraph $H$ of a graph $\Pi$  admitting a near-embedding into a compact surface is contained in some finite neighbourhood of the vortices, then $H$ has finite tree-width.

\begin{lemma}\label{lem:dense-vortex-tree-width}
Let $\Pi$ be a graph with a fixed near-embedding in a compact connected surface, with vortices $Q_1,\ldots,Q_q$. Let $H\subseteq\Pi$ be connected and locally finite, and put $W:=V(H)\cap\bigcup_{i=1}^q V(Q_i)$. If $W$ is coarsely dense in $H$, then $H$ has finite tree-width.
\end{lemma}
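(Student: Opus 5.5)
The plan is to bound the tree-width of every finite subgraph of $H$ uniformly, and then pass to $H$ itself. For a countable graph in which every finite subgraph has tree-width at most $w$, a standard compactness argument gives $\operatorname{tw}(H)\leq w$. So the whole task is a bound on $\operatorname{tw}(F)$, for finite $F\subseteq H$, that does not depend on $F$.

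Fix $r$ such that every vertex of $H$ lies within $H$-distance $r$ of $W$, and let $X$ be the deleted set. First I will get rid of $X$. Let $Y\subseteq V(H)$ be the set of vertices within $H$-distance $r$ of $X\cap V(H)$. Since $X$ is finite and $H$ is locally finite, $Y$ is finite. Every vertex of $H-Y$ is joined to $W$ by a path of length at most $r$ that avoids $X$. Adding $Y$ to every bag raises the width by at most $|Y|$. It therefore suffices to bound $\operatorname{tw}(F)$ for finite $F\subseteq H-Y$, and we may enlarge $F$ so that it contains these witnessing paths.

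Next I will build an auxiliary finite graph. By Lemma~\ref{lem:finite-near-embedded-subgraph}, applied to the near-embedding of $\Pi-X$ with empty deleted set, $F$ lies in a finite graph $F'$ near-embedded in the fixed surface $\Sigma$ with empty deleted set. I then modify $F'$ in three steps, keeping control of the near-embedding.
\begin{enumerate}
\item For each vortex vertex $v\in\beta_i(u)$, choose one such index $u$ and add the edge $vu$. Since $u\in\beta_i(u)$, the linear decompositions remain valid with the same width, so this is still a near-embedding. Now every vortex vertex is adjacent to a society vertex.
\item Glue a disc onto each boundary cycle $C_i$, place a new vertex $z_i$ in it, and join $z_i$ to all of $U_i$ inside that disc. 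To keep Grohe's form, I keep a slightly shrunken boundary cycle around each vortex. I expect this to need only bookkeeping, since $z_i$ sits in the disc away from the vortex.
\item Add at most $q-1$ handles, carrying edges $z_1z_2,\ldots,z_{q-1}z_q$.
\end{enumerate}
The result $F''$ is near-embedded with empty deleted set in a surface $\Sigma'$ that depends only on $\Sigma$ and $q$.

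In $F''$, every vertex of $F$ is within distance $r$ of $W$. Each vertex of $W$ is within distance $1$ of some $U_i$, and hence within distance $2$ of $z_i$. Each $z_i$ is within distance $q$ of $z_1$. So $F\subseteq B_{F''}(z_1,r+q+2)$. Theorem~\ref{thm:grohe-local-tree-width} applied to $\Sigma'$ then gives $\operatorname{tw}(F)\leq c(r+q+2)$ with $c$ independent of $F$. Adding back $Y$ bounds $\operatorname{tw}(H)$ by $c(r+q+2)+|Y|$.

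The main obstacle is the surgery in the second and third steps: checking carefully that adding the apex-like vertices $z_i$ and the handles really yields a near-embedding in the sense of Definition~\ref{def:near-embedding}, which is the form Theorem~\ref{thm:grohe-local-tree-width} requires. The delicate points are placing $z_i$ in the embedded part while the vortex still attaches along a boundary cycle, with its society ordered compatibly. If this fails, the fallback is to avoid the $z_i$ altogether. One would instead cover $F$ by the $q$ sets $B(U_i,r+2)$ and bound the tree-width of each by a local version of Grohe's argument. This is harder to make rigorous, since a society can be arbitrarily long.
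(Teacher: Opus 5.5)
\textbf{There is a genuine gap at Step~2.} Your overall plan is the right one: cone off the vortices so that a finite piece of $H$ lies in a ball of bounded radius, then invoke Theorem~\ref{thm:grohe-local-tree-width}. Your handling of the deleted set and of the passage from finite subgraphs to $H$ is fine. But Step~2 is not bookkeeping. You cannot have $z_i$ in the \emph{embedded part}, joined to all of $U_i$, while $U_i$ remains (in order) the society of a vortex on a boundary cycle, in a surface independent of $|U_i|$. A shrunken copy of $C_i$ inside the glued disc no longer passes through $U_i$, whereas Definition~\ref{def:near-embedding} requires the society to lie on the cuff.

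In fact no realisation works. Given any such near-embedding in $\Sigma'$, cap that boundary cycle with a disc containing a new vertex $c$ joined to all of $U_i$. This embeds the embedded part plus $c$ in a closed surface of the same Euler genus as $\Sigma'$. Now suppose the embedded part of $\Pi$ contains a ladder along the society: a path $o_1\cdots o_m$ with rungs $o_ju_j$ to society vertices $u_1,\dots,u_m$. Here $W$ is even $1$-dense, so the hypotheses hold. Contract the path to a vertex $o$. Then $o$, $z_i$ and $c$ are all adjacent to $u_1,\dots,u_m$, so $K_{3,m}$ would embed in $\Sigma'$. Euler's formula then forces Euler genus at least $(m-2)/2$. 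Since $m$ grows with $F$, there is no fixed $\Sigma'$ to which Grohe's theorem applies. Your fallback is not developed enough to replace this step.

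\textbf{The fix.} The missing idea is to put the apex \emph{inside the vortex}, which is what the paper does. Adjoin $z_i$ to $Q_i$, join it to every vertex of $Q_i$ (society included), and add it to every bag $\beta_i(u)$. Axioms (L1)--(L3) are immediate, and the width rises by one. Under the convention of Definition~\ref{def:near-embedding}, this is absorbed by adding one empty cuff away from the finite drawing. Your Step~1 also becomes unnecessary.

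The price is that $z_i\notin V(\Gamma_0)$. An edge $z_iz_j$ between distinct vortices belongs to none of the pieces of a near-embedding, so Step~3 fails as written. There are two ways round this.
\begin{itemize}
\item \emph{The paper's route.} Take $F$ connected. This suffices, since tree-width is a maximum over components and every finite subgraph of $H$ lies in a finite connected one. Each vertex of $F$ is within $r+1$ of an apex, and apexes attached to adjacent vertices of $F$ are within $2r+3$ of each other. At most $q-1$ such steps link all apexes in use, so $F$ lies in a ball of radius $q(2r+3)$.
\item \emph{Keeping your handles.} Also declare $z_i$ a new society vertex on $C_i$. Since $z_i$ lies in every bag, a bag indexed by $z_i$ (a neighbouring bag plus $z_i$) can be inserted without violating (L3). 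Then $z_i\in V(\Gamma_0)$, and the edges $z_iz_{i+1}$ can be routed through added handles.
\end{itemize}
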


\begin{proof}
The idea is to cone off the vortices, bringing our graph into a ball of bounded radius to which Grohe's theorem applies. We first suppose that the deleted set is empty, and let $\Sigma$ be the surface of the given near-embedding. Choose $R$ such that $W$ is $R$-coarsely dense in $H$.

Let $F\subseteq H$ be a finite connected subgraph. Join each vertex of $F$ to $W$ by a path in $H$ of length at most $R$. By Lemma~\ref{lem:finite-near-embedded-subgraph}, these paths and $F$ are contained in a finite subgraph $L\subseteq\Pi$ with an inherited near-embedding in $\Sigma$. In particular, the endpoints of our paths retain their vortex membership.

Cone off each nonempty vortex of $L$ by adding a new vertex adjacent to all its vertices, and denote the resulting graph by $\widehat L$. Including each cone vertex in every bag of its vortex increases the width by at most one. We accommodate this by adding one empty cuff to $\Sigma$, choosing a disc disjoint from the finite embedded part. Thus, Theorem~\ref{thm:grohe-local-tree-width} gives a common local tree-width bound $\operatorname{ltw}_{\widehat L}(r) \leq cr$ for all the graphs $\widehat L$, independently of $F$.

Each vertex of $F$ is within distance $R+1$ of one of the cone vertices. For adjacent vertices of $F$, the corresponding cone vertices are therefore at distance at most $2R+3$. Since $F$ is connected and there are at most $q$ cone vertices, these can be linked by chains of at most $q-1$ such steps. Consequently, $F$ lies in a ball of radius at most $q(2R+3)$ in $\widehat L$, giving
$$
\operatorname{tw}(F)\leq cq(2R+3).
$$
This bound is independent of $F$. Since $F$ was an arbitrary finite connected subgraph of $H$, it follows that $\operatorname{tw}(H)$ satisfies the same bound, and in particular is finite. 

Now let $X$ be a possibly nonempty deleted set, and choose $R$ such that $W$ is $R$-coarsely dense in $H$. Every vertex outside the finite set $B_H(X\cap V(H),R)$ has a path of length at most $R$ to $W$ avoiding $X$. It follows that $W\cap V(C)$ is coarsely dense in every infinite component $C$ of $H-X$, with distances measured in $C$. The preceding argument, applied inside $\Pi-X$, therefore gives finite tree-width for each such component. Since $H-X$ has only finitely many components by connectedness and local finiteness of $H$, restoring the finite set $X\cap V(H)$ proves the result.
\end{proof}

Away from the vortices, we must also remove the obstruction coming from the topology of the surface.
For a countable embedded graph, deleting finitely many vertices suffices.

\begin{lemma}\label{lem:finite-planarising-set}
Let $G$ be a countable graph embedded in a compact surface.
There is a finite set $Z\subseteq V(G)$ such that $G-Z$ is planar.
\end{lemma}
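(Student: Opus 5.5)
The plan is to reduce to a compact surface with a finite cell structure and delete a finite set of vertices which kills the topology. Let $\Sigma$ be the compact surface, and cap off each boundary cycle with a disc to obtain a closed surface $\widehat\Sigma$; an embedding in $\Sigma$ is in particular an embedding in $\widehat\Sigma$, and it suffices to find $Z$ for $\widehat\Sigma$. If $\widehat\Sigma$ is the sphere we take $Z=\emptyset$. Otherwise, fix finitely many simple closed curves $\gamma_1,\dots,\gamma_g$ in $\widehat\Sigma$ such that cutting $\widehat\Sigma$ along them yields a disc; for instance, take a standard system of curves coming from the polygon model of $\widehat\Sigma$. Any subgraph of $G$ whose image avoids $\bigcup_j\gamma_j$ then lies in an open disc, and is therefore planar.

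The difficulty is that $G$ is countable and possibly infinite, so there may be infinitely many edges meeting the curves $\gamma_j$, and we cannot simply delete everything touching them. The intended remedy is to exploit the freedom in choosing the curves. We first invoke the fact that a graph embeds in a surface if and only if all of its finite subgraphs do, together with the observation that the statement is purely about abstract graphs: it is enough to find finite $Z$ such that $G-Z$ embeds in the sphere. We then argue via Euler genus. For each finite subgraph $F\subseteq G$, the Euler genus of $F$ is at most $\operatorname{eg}(\widehat\Sigma)$. Choose a finite subgraph $F_0$ of maximum Euler genus $g_0$ among finite subgraphs; this maximum exists because the genus is bounded. Take a cellular embedding of $F_0$ in a surface of Euler genus $g_0$, and let $Z$ be the vertex set of a finite subgraph of $F_0$ which already has Euler genus $g_0$. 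One checks that such a subgraph has only finitely many vertices; in fact we may take a minimal one, a union of at most $g_0$ handle-witnessing pieces such as Kuratowski-type subgraphs.

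The key claim is that $G-Z$ is planar. Suppose not. Then, by the compactness principle for planarity (Kuratowski's theorem applied to countable graphs, or equivalently the de Bruijn--Erdős compactness argument), $G-Z$ contains a finite Kuratowski subgraph $K$. The plan is to show that $Z\cup K$ spans a finite subgraph of Euler genus strictly larger than $g_0$, contradicting maximality. Here we use additivity of Euler genus over blocks and amalgamations along few vertices. This is exactly the step I expect to be the main obstacle, since $K$ may attach to $F_0$ in complicated ways and Euler genus is not additive for arbitrary vertex-disjoint unions that are connected by edges.

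If that combinatorial route becomes too messy, the fallback is topological. We use the given embedding in $\widehat\Sigma$ and pass to a connected finite subgraph $F$ whose embedding already carries the full homological complexity of the ambient surface, meaning that every face of $F$ containing an infinite part of $G$ is a disc. We then take $Z$ to be the vertices of $F$ together with the finitely many vertices placed on the faces that are not discs. Since all of $G-Z$ lies inside the disc faces of $F$, and each open disc is planar, it follows that $G-Z$ embeds in a disjoint union of open discs and is therefore planar.
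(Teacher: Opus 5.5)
Your Euler-genus route is correct, and the step you single out as the main obstacle is in fact immediate. You never need the subgraph \emph{spanned} by $Z\cup V(K)$. Subgraphs need not be induced, so if $F_0$ is a finite subgraph of maximal Euler genus $g_0$ and $K$ is a finite nonplanar subgraph of $G-V(F_0)$, then the vertex-disjoint union $F_0\sqcup K$, using only the edges of $F_0$ and of $K$, is itself a finite subgraph of $G$. Euler genus is additive over connected components. This is a standard consequence of additivity over blocks: in an embedding of $F_0\sqcup K$ in a connected surface some face is incident with both parts, so a bridge can be added inside it. Hence
\[
g_0\;\geq\;\operatorname{eg}(F_0\sqcup K)\;=\;g_0+\operatorname{eg}(K)\;\geq\;g_0+1,
\]
a contradiction. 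How $K$ is attached to $F_0$ by further edges of $G$ is irrelevant, and no amalgamation along vertices is needed. The cellular embedding and the minimal witnessing subgraph are also superfluous; simply take $Z:=V(F_0)$, which is finite because $F_0$ is. The proof then reads as follows. The maximum $g_0\leq\operatorname{eg}(\widehat\Sigma)$ is attained, the display shows every finite subgraph of $G-Z$ is planar, and compactness of planarity for countable graphs makes $G-Z$ planar.

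The topological fallback should be dropped, because the finite subgraph $F$ it requires need not exist. For instance, let $G$ be an infinite ray drawn inside a small disc in the torus. Every finite subgraph $F$ is then a finite forest, and its complement is a single face homeomorphic to the torus with finitely many points removed. This face is not a disc, not even planar, and contains all but finitely many vertices of $G$, so your $Z$ would be infinite. (Here $Z=\emptyset$ works, and the Euler-genus argument finds it with $g_0=0$.) Producing a finite $F$ whose faces capture all the topology that $G$ actually uses is essentially the whole content of the lemma, so the fallback assumes what needs proving.

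The paper instead invokes \cite[Thm.~1.2]{georgakopoulos2025excluded} to write $G$ as a finite union of planar subgraphs with finite pairwise intersections, and then deletes those intersections. Your repaired argument is elementary and self-contained, resting only on additivity of Euler genus for finite graphs and the compactness theorem for planarity. It even proves slightly more: every countable graph whose finite subgraphs have uniformly bounded Euler genus has a finite planarising set, with no embedding of $G$ itself required. The paper's route yields a genuine decomposition of $G$ into planar pieces, which is more structure than the lemma needs, at the price of relying on an external structure theorem.
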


\begin{proof}
Apply \cite[Thm.~1.2]{georgakopoulos2025excluded} to decompose $G$ into finitely many planar subgraphs with finite pairwise intersections.
Deleting the union of these intersections leaves a disjoint union of planar graphs.
\end{proof}

The above results tell us that either our quasi-transitive source graph has finite tree-width, or there are arbitrarily large regions of the image which appear planar. We now  set up the machinery required to take advantage of this. 

A \emph{pointed graph} is a pair $(\Lambda,u)$ consisting of a connected graph and a distinguished vertex.
A sequence $(\Lambda_n,u_n)$ \emph{converges} to $(\Lambda_\infty,u_\infty)$ if, for every integer $r\geq0$, the induced balls $\Lambda_n[B_{\Lambda_n}(u_n,r)]$ and $\Lambda_\infty[B_{\Lambda_\infty}(u_\infty,r)]$ are isomorphic as pointed graphs for all sufficiently large $n$.
The following compactness statement follows by a standard diagonal argument. 

\begin{proposition}\label{prop:pointed-compactness}
Every sequence of pointed graphs with uniformly bounded degree has a convergent subsequence. 
\end{proposition}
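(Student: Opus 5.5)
The plan is a standard diagonal argument for pointed-graph compactness. Fix $\Delta$ bounding the degrees of all the $\Lambda_n$. First, the pointed ball $\Lambda_n[B_{\Lambda_n}(u_n,r)]$ has at most $1+\Delta+\cdots+\Delta^r$ vertices. Consequently, for each fixed $r$ only finitely many isomorphism types of pointed graphs can occur as such balls.

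Next we extract nested subsequences. By pigeonhole, some subsequence $I_1\subseteq\mathbb{N}$ has all $r=1$ balls isomorphic, as pointed graphs, to a fixed pointed graph $B_1$. Inductively, inside $I_{r-1}$ we pass to an infinite $I_r$ whose $r$-balls are all isomorphic to a fixed $B_r$. The diagonal sequence, taking the $r$-th element of $I_r$, is eventually contained in each $I_r$. Hence for every $r$ its $r$-balls are eventually isomorphic to $B_r$.

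It remains to build the limit $(\Lambda_\infty,u_\infty)$, which is the only step needing care. The isomorphism types are compatible: the $(r-1)$-ball of $B_r$ about its root is isomorphic to $B_{r-1}$. This holds because balls of radius $r-1$ in $\Lambda_n$ are exactly the induced $(r-1)$-balls inside the induced $r$-ball, since geodesics from $u_n$ of length at most $r-1$ stay within $B(u_n,r-1)$. However, the identifying isomorphisms are not unique, so we must choose them coherently. For each $r$ there are finitely many isomorphisms $B_{r-1}\to$ ($(r-1)$-ball of $B_r$). We form the inverse system of pointed embeddings and take a direct limit along chosen embeddings $\iota_r:B_{r-1}\hookrightarrow B_r$; any choice works. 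Set $\Lambda_\infty:=\varinjlim B_r$ with root the image of the roots.

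Then $\Lambda_\infty$ is connected. Since each $\iota_r$ identifies $B_{r-1}$ with the induced $(r-1)$-ball of $B_r$, the induced $r$-ball of $\Lambda_\infty$ about $u_\infty$ is exactly the image of $B_r$. Distances are preserved: a shortcut in $\Lambda_\infty$ would lie in some $B_s$, and $B_r$ is a metric ball inside $B_s$. Therefore the diagonal subsequence converges to $(\Lambda_\infty,u_\infty)$, and $\Lambda_\infty$ has degree at most $\Delta$.

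The main obstacle is just this bookkeeping: verifying that induced balls inside induced balls coincide with the true balls, so that the direct limit has the right $r$-balls. Everything else is pigeonhole.
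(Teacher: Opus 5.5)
Your argument is correct and is the standard diagonal argument that the paper invokes without details. You use pigeonhole on the finitely many pointed $r$-ball types, pass to a diagonal subsequence, and assemble the limit as the union of the nested balls $B_r$; your remark that the identifications must be chosen coherently is unnecessary, since the paper's notion of convergence only asks for an isomorphism at each radius separately, so, as you then say, any choice of the embeddings $\iota_r$ works.
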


We next record the following standard construction.

\begin{proposition}\label{prop:pointed-limits-maps}
Let $(\Gamma,v)$ be a pointed graph, and suppose that $(\Pi_n,u_n)$ converges to $(\Pi_\infty,u_\infty)$. Assume that all these graphs have uniformly bounded degree. Let $f_n:\Gamma\to\Pi_n$ satisfy $f_n(v)=u_n$ for every $n$.
\begin{enumerate}
\item If the maps $f_n$ are $\kappa$-regular for a common constant $\kappa$, then there is a $\kappa$-regular map $f_\infty:\Gamma\to\Pi_\infty$.
\item If the maps $f_n$ are $\lambda$-quasi-isometries for a common constant $\lambda$, then there is a $\lambda$-quasi-isometry $f_\infty:\Gamma\to\Pi_\infty$.
\end{enumerate}
In both cases, we may take $f_\infty(v)=u_\infty$.
\end{proposition}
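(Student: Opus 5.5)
We prove Proposition~\ref{prop:pointed-limits-maps} by a diagonal compactness argument applied to the maps themselves. Throughout, let $\Delta$ bound all degrees, so every ball of radius $r$ in any of the graphs $\Gamma$, $\Pi_n$, $\Pi_\infty$ has at most $D(r)$ vertices, independently of $n$.

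Fix $r$. In case (1), $\kappa$-Lipschitzness and $f_n(v)=u_n$ give $f_n(B_\Gamma(v,r))\subseteq B_{\Pi_n}(u_n,\kappa r)$. In case (2), the upper quasi-isometry bound gives the analogous inclusion into $B_{\Pi_n}(u_n,\lambda r+\lambda)$. For large $n$, fix pointed isomorphisms
\[
\phi_n^{(R)}:\Pi_n[B_{\Pi_n}(u_n,R)]\to\Pi_\infty[B_{\Pi_\infty}(u_\infty,R)]
\]
for the relevant radius $R=R(r)$. Set $g_n^{(r)}:=\phi_n^{(R)}\circ f_n|_{B_\Gamma(v,r)}$. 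This is a map from a finite set to a finite set, so there are finitely many possibilities for it. A diagonal subsequence therefore makes $g_n^{(r)}$ eventually constant for every $r$. One subtlety is that the isomorphisms $\phi_n^{(R)}$ for different $R$ need not be compatible. We handle this by choosing, for each $r$, the isomorphism at a radius $R'(r)$ much larger than $R(r)$. We then pass to a subsequence along which the restrictions of these isomorphisms to smaller balls also stabilise, again by finiteness, and define $f_\infty$ on $B_\Gamma(v,r)$ as the eventual value. A cleaner alternative is to work entirely with restrictions of a single isomorphism at the largest radius used in each diagonal stage, so that the definitions on nested balls agree. In either version, $f_\infty(v)=u_\infty$.

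Next we verify the inequalities. Given $x,y\in V(\Gamma)$, choose $r$ with $x,y\in B_\Gamma(v,r)$ and $n$ large. The distance $d_{\Pi_n}(f_n(x),f_n(y))$ is at most $\kappa d(x,y)$, and a geodesic realising it stays inside a ball of radius about $2\kappa r$ around $u_n$. Such a geodesic transfers through the isomorphism, provided $R'$ was chosen at least $3\kappa r$, say. Hence the Lipschitz bound passes to $f_\infty$. For the lower bound in case (2), we argue in the reverse direction: a shortest path in $\Pi_\infty$ between $f_\infty(x)$ and $f_\infty(y)$ of length at most $\lambda d+\lambda$ also stays in a controlled ball, so distances in large balls agree exactly in $\Pi_n$ and $\Pi_\infty$.

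For the fibre bound in case (1), take $z\in V(\Pi_\infty)$. Any preimage $x$ of $z$ satisfies $d(v,x)\le \kappa d(u_\infty,z)\cdot$(nothing). Lipschitzness alone does not bound $d(v,x)$, so we argue by finite subsets instead. If $\kappa+1$ preimages existed, they would lie in some $B_\Gamma(v,r)$, and for large $n$ the map $f_n$ would have $\kappa+1$ points in a single fibre, a contradiction. Coarse surjectivity in case (2) is similar: for $z\in\Pi_\infty$ at distance $s$ from $u_\infty$, pull $z$ back to $\Pi_n$ and find $x$ with $d(f_n(x),z_n)\le\lambda$. The lower quasi-isometry bound gives $d(v,x)\le\lambda(s+2\lambda)+\lambda^2$, so $x$ lies in a fixed ball and the property passes to the limit.

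The main obstacle is the bookkeeping in the diagonal argument: the isomorphisms must be chosen on balls large enough that distances between the relevant points, and the geodesics realising them, are faithfully transferred. This is routine once radii of the form $R'(r)=C(r+1)$ are fixed in advance, with $C$ depending on $\kappa$ or $\lambda$.
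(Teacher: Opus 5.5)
Your proposal is correct and is essentially the paper's argument: a diagonal subsequence along which $\varphi_n(f_n(x))$ stabilises for every $x$, with $f_\infty$ defined as the eventual value. Your checks of the Lipschitz bound, the lower bound, the fibre bound and coarse surjectivity correctly fill in what the paper leaves as an exercise. Of your two ways of handling compatibility, the second---fixing a single pointed isomorphism $\varphi_n$ for each $n$, on balls whose radii tend to infinity, and only ever restricting it---is exactly what the paper does, and it is the one that works. Stabilising restrictions of independently chosen isomorphisms only makes the definitions on nested balls agree up to a pointed automorphism of a ball in $\Pi_\infty$, so that first version should be dropped.
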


\begin{proof}
After passing to a subsequence, choose pointed isomorphisms $\varphi_n$ between the balls of radius $n$ about $u_n$ and $u_\infty$. For each fixed $x\in V(\Gamma)$, the points $\varphi_n(f_n(x))$ are defined for all sufficiently large $n$ and lie in a fixed finite ball. Since $\Gamma$ is countable, a diagonal argument gives a further subsequence along which these points are eventually constant for every $x$. We then define
$$
f_\infty(x):=\lim_{n\to\infty}\varphi_n(f_n(x)).
$$
It is straightforward to verify that $f_\infty$ has the required properties, and we leave this as an exercise to the reader. 
\end{proof}

We now combine these observations to prove the main result of this section. 

\begin{proof}[Proof of Theorem~\ref{thm:escaping-vortex}]
The conclusions are immediate when $\Gamma$ is finite or two-ended, so assume otherwise.
In the regular-map case, Lemma~\ref{lem:regular-connected-image} gives a connected subgraph $H$ of the given target and a coarsely surjective regular map $f:\Gamma\to H$.
In the quasi-isometry case, we take $f:\Gamma\to H$ to be a quasi-isometry.
In both cases, $H$ is a connected bounded-degree subgraph of a graph $\Pi$ admitting a near-embedding in a compact surface.
Fix such a near-embedding, with deleted set $X$, embedded part $\Pi_0$, and vortices $Q_1,\ldots,Q_q$, and put $W:=V(H)\cap\bigcup_{i=1}^q V(Q_i)$.
By Lemma~\ref{lem:finite-planarising-set}, there is a finite set $Z\subseteq V(H\cap\Pi_0)$ such that $(H\cap\Pi_0)-Z$ is planar.

Suppose first that $W$ is coarsely dense in $H$.
By Lemma~\ref{lem:dense-vortex-tree-width}, $H$ has finite tree-width, so every end of $H$ is thin.
Lemma~\ref{lem:regular-map-on-ends} implies that $\Gamma$ has no thick end.
Theorem~\ref{lem:thin-ended-almost-planar} then gives both conclusions.

We may therefore assume that $W$ is not coarsely dense in $H$.
Note that the set $W\cup((X\cup Z)\cap V(H))$ is also not coarsely dense in $H$, since $X$ and $Z$ are finite. 
Since $f$ is coarsely surjective, we may choose $x_n\in V(\Gamma)$ such that, writing $u_n:=f(x_n)$, we have
\begin{equation}\label{eq:escaping-vortex-basepoints}
d_H\bigl(u_n,W\cup((X\cup Z)\cap V(H))\bigr)\longrightarrow\infty.
\end{equation}
Quasi-transitivity allows us to pass to a subsequence for which all $x_n$ belong to one $\operatorname{Aut}(\Gamma)$-orbit.
Choose $v$ in this orbit and automorphisms $g_n$ satisfying $g_n(v)=x_n$.
The maps $f_n:=f\circ g_n$ are regular with common constants and satisfy $f_n(v)=u_n$.
In the quasi-isometry case they are also quasi-isometries with a common constant.
By Proposition~\ref{prop:pointed-compactness}, after passing to a further subsequence, $(H,u_n)$ converges to a pointed graph $(H_\infty,u_\infty)$ with the same degree bound as $H$.
Proposition~\ref{prop:pointed-limits-maps} gives a regular map $\Gamma\to H_\infty$, and gives a quasi-isometry in the quasi-isometry case.

It remains to show that $H_\infty$ is planar. 
Fix an integer $r\geq0$.
By \eqref{eq:escaping-vortex-basepoints}, for all sufficiently large $n$, the ball $B_H(u_n,r)$ avoids $X\cup Z\cup\bigcup_{i=1}^q V(Q_i)$.
Its induced subgraph is therefore contained in the planar graph $(H\cap\Pi_0)-Z$.
Pointed convergence implies that the ball of radius $r$ about $u_\infty$ also induces a planar graph.
Every finite subgraph of $H_\infty$ lies in such a ball, so compactness of planarity implies that $H_\infty$ is planar.
Theorem~\ref{thm:almost-planar-regular} now gives almost planarity of $\Gamma$ in the regular-map case, and in the quasi-isometry case we have $\Gamma\simeq_{\mathrm{QI}}H_\infty$. The theorem follows. 
\end{proof}

\section{Localising to a torso}\label{sec:localising-torso}

We now begin to consider the case where the Diestel--Thomas decompositions are non-trivial. In this section we prove a technical result which allows us to `localise' parts of our quasi-transitive graph to specific torsos of this decomposition, effectively breaking down our target graph into pieces to which we may apply Theorem~\ref{thm:escaping-vortex}. This is achieved in two steps, and we begin with the following lemma. 

\begin{lemma}\label{lem:torso-localisation}
Let $\Pi$ be a connected bounded-degree graph with a tree-decomposition $(T,\beta)$, and let $t\in V(T)$. Suppose that there is a constant $a<\infty$ such that $|\beta(t)\cap\beta(s)|\leq a$ for every neighbour $s$ of $t$. Let $H\subseteq\Pi$ be an infinite connected subgraph satisfying $(\ddagger)$, and suppose that every component of $H-\beta(t)$ is finite. Then $H$ is quasi-isometric to a connected bounded-degree subgraph $M$ of the torso $\Pi[\![\beta(t)]\!]$.
\end{lemma}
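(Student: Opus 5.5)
The plan is to take $M$ to be the subgraph of the torso induced on (a bounded neighbourhood of) $V(H)\cap\beta(t)$, and to show that the inclusion of this vertex set into $H$ is a quasi-isometry. There are three steps.

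\emph{Step 1: $(\ddagger)$ gives uniformly bounded excursions.} Let $C$ be a component of $H-\beta(t)$. By hypothesis $C$ is finite. By property (T3) of the tree-decomposition, every vertex of $C$ lies in bags on the strict side of some single edge $ts$ with $s$ a neighbour of $t$ (the side away from $t$). Connectivity of $C$, together with (T2), forces all of $C$ onto one such side. Hence the neighbourhood $N_H(C)$ lies in the adhesion set $\beta(t)\cap\beta(s)$, which has at most $a$ vertices. Since $H$ is infinite and connected, $C$ is a finite component of $H-N_H(C)$ with $|N_H(C)|\leq a$. Property $(\ddagger)$ then gives $|C|\leq m(a)$. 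So each excursion of $H$ away from $\beta(t)$ has at most $m:=m(a)$ vertices, and its attachment set is a clique in the torso.

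\emph{Step 2: build $M$.} Put $A:=V(H)\cap\beta(t)$. Let $M$ be the graph on $A$ whose edges are the edges of $H$ inside $A$, together with an edge $xy$ whenever $x,y\in A$ are both adjacent to a common component $C$ as in Step 1. Such $x,y$ lie in a common adhesion set $\beta(t)\cap\beta(s)$, so $xy$ is an edge of the torso $\Pi[\![\beta(t)]\!]$, and therefore $M$ is a subgraph of the torso.

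\begin{itemize}
\item[] \emph{Connectivity.} Any $H$-path between vertices of $A$ decomposes into edges inside $A$ and excursions through components $C$, each of which is replaced by a single edge of $M$.
\item[] \emph{Bounded degree.} A vertex $x\in A$ has at most $\Delta(H)$ neighbours in $H$, so it meets at most $\Delta(H)$ components $C$. Each such component contributes at most $a$ new neighbours, so the degree of $x$ in $M$ is at most $\Delta(H)(1+a)$.
\item[] \emph{$A$ is nonempty.} If $A$ were empty, $H$ would itself be a single finite component, contradicting that $H$ is infinite.
\end{itemize}

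\emph{Step 3: quasi-isometry.} Consider the inclusion $A\hookrightarrow V(H)$.
\begin{itemize}
\item[] \emph{Coarse surjectivity.} Every vertex of a component $C$ lies within distance $m$ of $A$ in $H$, using a path inside $C$ followed by one edge.
\item[] \emph{Upper bound.} Each $M$-edge corresponds to an $H$-path of length at most $m+1$, so $d_H\leq (m+1)\,d_M$.
\item[] \emph{Lower bound.} A geodesic in $H$ between points of $A$ converts into an $M$-path of no greater length, as in the connectivity argument. Hence $d_M\leq d_H$.
\end{itemize}
So the inclusion is an $(m+1)$-quasi-isometry.

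The main obstacle is Step 1: one must check carefully that each component of $H-\beta(t)$ attaches only through a single adhesion set. That is what lets both $(\ddagger)$ and the torso clique edges be invoked. Everything after Step 1 is routine bookkeeping.
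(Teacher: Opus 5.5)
Your proof is correct and follows the paper's argument essentially line by line. The paper also takes $Z=V(H)\cap\beta(t)$ and makes each attachment set $\partial C$ a clique in $H[Z]$; this set lies in a single incident adhesion set, so it has size at most $a$. It then bounds $|C|$ via $(\ddagger)$ and proves $d_M\le d_H\le(m+1)d_M$ with $Z$ being $m$-dense, differing only in bounding degrees via $B_H(x,m+1)$ rather than by your direct count.

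One small caution: your opening sentence calls $M$ an induced subgraph of the torso. That version would fail, since edges of $\Pi$ or torso edges not coming from $H$ could shortcut $H$-distances and break $d_H\le(m+1)d_M$. The graph you actually construct in Step 2 is the right one.
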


\begin{proof}
The idea is to replace the components outside the bag by cliques on their boundaries. Put $Z:=V(H)\cap\beta(t)$, which is nonempty since $H$ is infinite.
For every component $C$ of $H-Z$, let 
$$
\partial C := \{v \in Z : d_H(v, C) = 1\}. 
$$
By the tree-decomposition axioms, $\partial C$ is contained in an adhesion set incident to $t$, so $|\partial C|\leq a$ and $\partial C$ spans a clique in the torso. Since $C$ is a finite component of $H-\partial C$, property $(\ddagger)$ gives a uniform bound $m\geq1$ on the sizes of these components.

Let $M$ be the graph obtained from $H[Z]$ by making each set $\partial C$ a clique. Then $M\subseteq\Pi[\![\beta(t)]\!]$. Replacing excursions outside $Z$ by walks of length at most one in $M$ shows that $M$ is connected and that $d_M(x,y)\leq d_H(x,y)$ for all $x,y\in Z$. Conversely, every added edge can be replaced by a path through the corresponding component $C$ of length at most $m+1$. Thus
$$
d_M(x,y)\leq d_H(x,y)\leq(m+1)d_M(x,y).
$$
Every vertex of $H$ lies within distance $m$ of $Z$, so the identity on $Z$ defines a quasi-isometry from $M$ to $H$. 
Finally, every neighbour of $x$ in $M$ lies in $B_H(x,m+1)$, so $M$ has bounded degree.
\end{proof}

We now apply this lemma together with Theorem~\ref{thm:escaping-vortex} to prove the following, which is the main result of this section. 

\begin{proposition}\label{prop:end-localisation}
Let $\Gamma$ be a connected, locally finite, quasi-transitive graph, and let $f:\Gamma\to\Pi$ be a regular map into a connected bounded-degree graph with a tree-decomposition $(T,\beta)$ of finite adhesion.
Suppose that all ends in $f_*(\Omega(\Gamma))$ are carried by a node $t$, that the adhesion sets incident to $t$ have uniformly bounded cardinality, and that the torso at $t$ admits a near-embedding in a compact connected surface.
Then $\Gamma$ is almost planar.
If $f$ is a quasi-isometric embedding, then $\Gamma$ is quasi-planar.
\end{proposition}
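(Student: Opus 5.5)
The plan is to reduce to Lemma~\ref{lem:torso-localisation} and then to Theorem~\ref{thm:escaping-vortex}. The cases where $\Gamma$ is finite or two-ended are immediate, so assume otherwise; then $\Gamma$ satisfies $(\ddagger)$ by Lemma~\ref{lem:quasi-transitive-ddagger}. Apply Lemma~\ref{lem:regular-connected-image} to obtain a connected subgraph $H\subseteq\Pi$ for which $f:\Gamma\to H$ is regular and coarsely surjective. In the quasi-isometric embedding case $f$ is already quasi-isometric onto its image, so we take $H$ to be its connected neighbourhood and $f$ becomes a quasi-isometry. Lemma~\ref{lem:regular-ddagger} transfers $(\ddagger)$ to $H$. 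Moreover $H$ has bounded degree, being a subgraph of $\Pi$, and it is infinite.

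The key step is to show that every component of $H-\beta(t)$ is finite. Once this holds, Lemma~\ref{lem:torso-localisation} (using the bound $a$ on adhesion sets at $t$) makes $H$ quasi-isometric to a connected bounded-degree subgraph $M$ of the torso at $t$. That torso admits a near-embedding in a compact surface, so Theorem~\ref{thm:escaping-vortex} applies to the composite $\Gamma\to H\to M$. In the regular case this composite is regular, which gives almost planarity. In the quasi-isometric case it is a quasi-isometry, which gives quasi-planarity.

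The finiteness argument goes as follows. Suppose some component $C$ of $H-\beta(t)$ is infinite. By (T3) and connectivity, $C$ lies entirely on the strict side, away from $t$, of a single edge $ts$ of $T$. Since $C$ is infinite, connected and locally finite, it contains a ray, which determines an end $\eta$ of $H$ and hence an end of $\Pi$. This end orients the edge $ts$ away from $t$, because the ray stays in the strict side beyond the finite adhesion set $\beta(t)\cap\beta(s)$. So $\eta$, viewed in $\Pi$, is not carried by $t$.

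On the other hand, $f:\Gamma\to H$ is coarsely surjective, so by Lemma~\ref{lem:regular-map-on-ends} the induced map on ends is surjective, and $\eta=f_*(\omega)$ for some end $\omega$ of $\Gamma$. Composing with the inclusion $H\subseteq\Pi$ shows that the end of $\Pi$ determined by $\eta$ is the image of $\omega$ under the original map $f:\Gamma\to\Pi$. This holds because $f_*$ is independent of the chosen edge paths and is functorial under composition with the inclusion. By hypothesis this end is carried by $t$, which is a contradiction.

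The main obstacle is bookkeeping in this last step. Ends of $H$ and of $\Pi$ must be identified compatibly, and it must be checked that "carried by $t$" refers to the orientation induced on $T$ by the $\Pi$-end, since $H$ may not inherit a tree-decomposition of finite adhesion. Under the hypotheses as stated, the thick-end lemma (Lemma~\ref{lem:thick-end-carried}) is not needed, since carrying is assumed for all image ends. One also has to make sure the connected image $H$ is chosen through paths consistent with the induced end map. This is handled by the independence statement in Lemma~\ref{lem:regular-map-on-ends}.
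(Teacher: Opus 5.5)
Your proposal is correct and follows the paper's proof essentially step for step. You pass to the connected image $H$ via Lemma~\ref{lem:regular-connected-image}, use surjectivity of the induced end map to show that every component of $H-\beta(t)$ is finite, and then apply Lemma~\ref{lem:torso-localisation} and Theorem~\ref{thm:escaping-vortex}. The quasi-isometric case works because $f:\Gamma\to H$ inherits the lower distance bound from $d_H\geq d_\Pi$. The only difference is cosmetic: you handle the finite and two-ended cases directly and then invoke Lemma~\ref{lem:quasi-transitive-ddagger}, whereas the paper handles the case with no thick end via Theorem~\ref{lem:thin-ended-almost-planar}; both are valid, since only $(\ddagger)$ is needed afterwards.
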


\begin{proof}
If $\Gamma$ has no thick end, both conclusions follow from Theorem~\ref{lem:thin-ended-almost-planar}.
Otherwise, $\Gamma$ is infinite and not two-ended, so it satisfies $(\ddagger)$.
Lemma~\ref{lem:regular-connected-image} gives a connected subgraph $Y\subseteq\Pi$ and a coarsely surjective regular map $h:\Gamma\to Y$ whose composition with inclusion is $f$.
By Lemma~\ref{lem:regular-map-on-ends}, every end of $Y$ is the image under $h_*$ of an end of $\Gamma$, so its image in $\Pi$ belongs to $f_*(\Omega(\Gamma))$ and is therefore carried by $t$.
Every component of $Y-\beta(t)$ is finite.
Indeed, a ray in an infinite component would lie in a component of $\Pi-\beta(t)$ supported on a strict side of an edge incident to $t$, and would orient that edge away from $t$.
By Lemma~\ref{lem:regular-ddagger}, $Y$ satisfies $(\ddagger)$.
Lemma~\ref{lem:torso-localisation} therefore gives $Y\simeq_{\mathrm{QI}}M$, where $M$ is a connected bounded-degree subgraph of the torso at $t$. 
Composing $h$ with a quasi-isometry $Y\to M$ gives a regular map $\Gamma\to M$. 
Since the torso admits a near-embedding in a compact connected surface, Theorem~\ref{thm:escaping-vortex} makes $\Gamma$ almost planar.

If $f$ is a quasi-isometric embedding, then $h$ is a quasi-isometry, since it is regular and coarsely surjective and inherits the lower distance bound from $f$. Thus we have that $\Gamma\simeq_{\mathrm{QI}}M$, and Theorem~\ref{thm:escaping-vortex} gives quasi-planarity.
\end{proof}

\section{One-ended and accessible graphs}\label{sec:one-ended-case}

In this section we apply the above results to prove our main theorems in the case that the input quasi-transitive graph is one-ended. We then describe how the one-ended case can be used to settle the accessible case. In particular, we conclude this section by proving Theorem~\ref{thm:reg-ctble} of the introduction. 

The one-ended case is dealt with by the following theorem. 

\begin{theorem}\label{thm:one-ended-rigidity}
Let $\Gamma$ be a connected, locally finite, quasi-transitive, one-ended graph.
\begin{enumerate}
\item If $\Gamma$ admits a regular map into a connected bounded-degree graph which excludes a countable clique, then $\Gamma$ is almost planar.
\item If $\Gamma$ is quasi-isometric to a connected graph which excludes a countable clique, then $\Gamma$ is quasi-isometric to a planar graph.
\end{enumerate}
\end{theorem}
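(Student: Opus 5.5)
The plan is to reduce both parts to Proposition~\ref{prop:end-localisation}. For part (1), let $f:\Gamma\to\Pi$ be regular, with $\Pi$ connected, of bounded degree, and excluding $K_\omega$. For part (2), let $f:\Gamma\to\Pi$ be a quasi-isometry. In this case Lemma~\ref{lem:bounded-degree-qi-image} replaces $\Pi$ by a connected bounded-degree subgraph $\Pi'$ quasi-isometric to $\Gamma$. Since minors of a subgraph are minors of the whole graph, $\Pi'$ still excludes $K_\omega$, and we obtain a quasi-isometry $\Gamma\to\Pi'$. In either case, Theorem~\ref{thm:diestel-thomas} gives a tree-decomposition $(T,\beta)$ of the target with finite adhesion, in which every torso is a plane graph with at most one vortex. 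Every such torso admits a near-embedding in a compact connected surface, namely a sphere with finitely many holes. The tree-decomposition restricts to the subgraph $\Pi'$: intersect each bag with $V(\Pi')$ and keep the same tree. Adhesion sets only shrink, and each torso of the restriction is a subgraph of the original torso. To keep a genuine near-embedding, I would instead apply Proposition~\ref{prop:end-localisation} with the decomposition of $\Pi$ itself and the map $\Gamma\to\Pi'\subseteq\Pi$. This map is still a quasi-isometric embedding into $\Pi$, and $\Pi$ may be taken connected. The one remaining problem is that $\Pi$ may not have bounded degree. I would handle this either by passing to the restricted decomposition on $\Pi'$ while using the torso-containment trick from the Remark, or by noting that the proof of Proposition~\ref{prop:end-localisation} only uses bounded degree of the image subgraph $Y$.

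Since $\Gamma$ is one-ended, $f_*(\Omega(\Gamma))$ consists of a single end $\omega'$. We may assume $\Gamma$ has a thick end, since otherwise Theorem~\ref{lem:thin-ended-almost-planar} gives the conclusion. Then $\omega'$ is thick by Lemma~\ref{lem:regular-map-on-ends}, so Lemma~\ref{lem:thick-end-carried} shows that $\omega'$ is carried by some node $t$. The torso at $t$ is near-embedded, so the only hypothesis of Proposition~\ref{prop:end-localisation} still to check is that the adhesion sets incident to $t$ have uniformly bounded size.

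That check is the main obstacle: finite adhesion bounds each adhesion set, but not uniformly over the possibly infinitely many neighbours of $t$. My plan is to show that only adhesion sets meeting the image of $f$ in an essential way matter, and that these are uniformly bounded. Concretely, take $Y$ as in Lemma~\ref{lem:regular-connected-image}. For each neighbour $s$ of $t$, the component of $Y$ beyond $\beta(t)\cap\beta(s)$ is finite, because the single end is carried by $t$. I would then replace $\beta(s)$-sides by modifying the decomposition: restrict every bag to $V(Y)$, and contract each edge $ts$ whose adhesion with $Y$ is large into $t$. By $(\ddagger)$, the finite pieces cut off by $Y\cap\beta(t)\cap\beta(s)$ are uniformly bounded only if the adhesion is bounded, which is circular. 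So the key claim I would aim to prove directly is this: if adhesions $Y\cap\beta(t)\cap\beta(s_n)$ of unbounded size existed, then, using quasi-transitivity and pointed limits as in the proof of Theorem~\ref{thm:escaping-vortex}, one would obtain either a second end or large clique minors in bounded regions. A large clique minor in a bounded region contradicts the bounded degree of $Y$ together with the bounded local complexity of a near-embedded torso (Proposition~\ref{prop:near-embeddings-exclude-minors}, via the Grohe bound, Theorem~\ref{thm:grohe-local-tree-width}). If this fails, the fallback is to enlarge $t$'s bag: absorb all finite pieces of $Y$ beyond $t$ into a single modified torso. That torso is quasi-isometric to $Y$ by the argument of Lemma~\ref{lem:torso-localisation}, since each finite piece has uniformly bounded diameter in $Y$ by one-endedness and $(\ddagger)$ applied at a bounded scale. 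Then apply Theorem~\ref{thm:escaping-vortex} directly. Once the uniform bound is established, Proposition~\ref{prop:end-localisation} yields almost planarity in case (1) and quasi-planarity in case (2).
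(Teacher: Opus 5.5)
Your reduction to Proposition~\ref{prop:end-localisation} is the right one, and you correctly identify the one hypothesis left to check: a uniform bound on the adhesion sets incident to the carrying node $t$. However, you do not establish it. The pointed-limit argument is only stated as a claim you would aim to prove. The fallback of absorbing the finite pieces into an enlarged torso needs those pieces to be uniformly bounded. As you note yourself, that is circular, because $(\ddagger)$ only controls components cut off by separators of bounded size. So there is a genuine gap at exactly the step you call the main obstacle.

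The missing observation uses only the definition of a torso. The torso at $t$ is formed by making $\beta(t)\cap\beta(s)$ a clique for every neighbour $s$ of $t$, so each incident adhesion set spans a complete subgraph of the torso. You already note that this torso, being a plane graph with at most one vortex, admits a near-embedding in a fixed compact surface (a sphere with finitely many holes). Proposition~\ref{prop:near-embeddings-exclude-minors} therefore gives $N$ with $K_N$ not a minor of the torso, so $|\beta(t)\cap\beta(s)|<N$ for every neighbour $s$. Your remark that finite adhesion alone does not give a uniform bound is true in general, but irrelevant here: no limit argument and no modification of the decomposition is needed.

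Separately, your treatment of case (2) is more complicated than it needs to be. You already observe that $\Pi'$ excludes $K_\omega$, so apply Theorem~\ref{thm:diestel-thomas} to $\Pi'$ itself. You then work throughout with a bounded-degree target and a quasi-isometry onto it, with no need to restrict decompositions or worry about the degree of $\Pi$. With these two changes your outline becomes the paper's proof.
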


\begin{proof}
Note that the unique end $\omega$ of $\Gamma$ is necessarily thick. 
In the regular-map case, let $H$ be the given bounded-degree target and let $f:\Gamma\to H$ be the given map.
In the quasi-isometry case, Lemma~\ref{lem:bounded-degree-qi-image} gives a connected bounded-degree subgraph $H$ of the target quasi-isometric to $\Gamma$; choose a quasi-isometry $f:\Gamma\to H$.
In either case, $H$ excludes a countable clique, and $f$ is regular.

By Theorem~\ref{thm:diestel-thomas}, $H$ has a tree-decomposition $(T,\beta)$ of finite adhesion whose torsos are plane graphs with at most one vortex.
Lemma~\ref{lem:regular-map-on-ends} makes $f_*(\omega)$ thick, so Lemma~\ref{lem:thick-end-carried} supplies a node $t$ carrying it.
The torso at $t$ excludes a finite clique by Proposition~\ref{prop:near-embeddings-exclude-minors}.
Since every incident adhesion set is a clique in that torso, these sets have uniformly bounded cardinality.
Now, Proposition~\ref{prop:end-localisation} gives the desired conclusion in each case, and we are done.
\end{proof}

We now prove the following elementary lemma, which enables us to `glue together' instances of the one-ended case to deduce the accessible case. 

\begin{lemma}\label{lem:planar-assembly}
Let $\Gamma$ be a connected, locally finite, quasi-transitive graph, and let $(T,\beta)$ be a canonical tree-decomposition with nonempty connected parts, bounded adhesion, and finite quotient $T/\operatorname{Aut}(\Gamma)$.
Then $\Gamma$ is almost planar if and only if every part is almost planar, and $\Gamma$ is quasi-planar if and only if every part is quasi-planar.
\end{lemma}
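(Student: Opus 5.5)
The forward implications are the easy half. Each part $P_t$ has bounded degree, since it is a subgraph of the bounded-degree graph $\Gamma$. I would first check that the inclusion $P_t\hookrightarrow\Gamma$ is a quasi-isometric embedding. Given $x,y\in\beta(t)$, take a geodesic in $\Gamma$ between them. Each excursion of this geodesic outside $\beta(t)$ leaves and re-enters through a single adhesion set $\beta(t)\cap\beta(s)$. That adhesion set is finite, of size at most $a$, and lies in the connected part $P_t$. Because the quotient $T/\operatorname{Aut}(\Gamma)$ is finite and $A_t$ acts on the set of neighbours of $t$, there are only finitely many $A_t$-orbits of incident adhesion sets, provided we also use local finiteness and the fact that each vertex lies in finitely many incident adhesion sets. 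The $P_t$-diameters of incident adhesion sets are therefore uniformly bounded, and an excursion can be replaced by a path in $P_t$ of bounded length. So the inclusion is a quasi-isometric embedding, hence a regular map. If $\Gamma$ is almost planar, composing gives a regular map from $P_t$ into a bounded-degree planar graph, and Theorem~\ref{thm:almost-planar-regular} makes $P_t$ almost planar. The quasi-planar direction is harder, because a subgraph of a quasi-planar graph need not be quasi-planar. Here I would instead invoke the known fact (from \cite{macmanus2023accessibility}, or alternatively argue via a planar Cayley-type model) that parts of such decompositions of quasi-planar quasi-transitive graphs are quasi-planar. If only that route is available, I would accept it as the main citation-dependent step.

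For the converse, I would build a model graph by gluing. Finiteness of $T/\operatorname{Aut}(\Gamma)$ gives finitely many isomorphism types of parts, together with their incident adhesion sets. Choose, for each orbit representative $t$, a planar graph $\Pi_t$ and a map $f_t:P_t\to\Pi_t$ that is regular, or a quasi-isometry in the second case. Transport these maps equivariantly along each orbit, so that the constants are uniform. Now form $\Lambda$ by taking disjoint copies of $\Pi_t$ over all $t\in V(T)$. For each edge $st$ of $T$, join the two copies by edges between the images $f_s(\beta(s)\cap\beta(t))$ and $f_t(\beta(s)\cap\beta(t))$. To keep planarity, I would do this via a single new edge from one chosen vertex of $f_s(\text{adhesion})$ to one of $f_t(\text{adhesion})$. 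A tree-like gluing of planar graphs along single edges is planar, since we may take a block-cut-tree style arrangement where each copy is attached at one vertex. The degree remains bounded as long as the chosen attachment vertices do not receive unboundedly many new edges. I would arrange this by using a star subdivision: replace each copy by itself plus pendant paths, so that each incident edge of $T$ gets its own attachment point near the image of its adhesion set. Local finiteness of $T$ is not available in general, so this needs care. However, each vertex of $\Gamma$ lies in only finitely many bags, because $\Gamma$ is locally finite, the parts are connected, and there are finitely many orbits. It follows that each image vertex is used by only finitely many, and in fact boundedly many, adhesion sets.

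Next I define $F:\Gamma\to\Lambda$ by choosing, for each vertex $x$, one bag containing it and setting $F(x)=f_t(x)$. Adjacent vertices lie in a common bag. Different choices of bag for the same vertex are linked through a bounded chain of adjacent bags whose images are joined within bounded distance, since adhesion sets have bounded diameter in their parts and the $f_t$ are uniformly Lipschitz. Hence $F$ is Lipschitz. It is bounded-to-one because each $f_t$ is, and each vertex of $\Lambda$ lies in a single copy. This gives almost planarity via Theorem~\ref{thm:almost-planar-regular}.

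In the quasi-planar case I still need the lower bound. A geodesic in $\Lambda$ between copies must pass through the gluing edges along the $T$-path. Pulling it back, each segment within a copy gives a $P_t$-path of comparable length, and each crossing is bridged inside an adhesion set. Coarse surjectivity is clear from the coarse surjectivity of each $f_t$ and the bounded length of the pendant paths. I expect this lower distance bound to be the main obstacle. One must check that distances in $\Gamma$ are computed, up to constants, by concatenating part-distances along the $T$-path. This holds because every path in $\Gamma$ between bags must pass through each intermediate adhesion set.
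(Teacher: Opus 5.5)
Your converse uses the same construction as the paper: disjoint planar models, one bridge per edge of $T$, and $F(x):=f_{t(x)}(x)$. But the claim that carries all the weight, that each vertex lies in boundedly many bags, does not follow from local finiteness, connected parts and finitely many orbits. Under the stated hypotheses it is false. For instance, let $\Gamma$ be the $3$-regular tree and let $T$ be a star whose centre has bag $V(\Gamma)$, with one leaf of bag $\{v\}$ for every pair $(v,\omega)$ of a vertex and an end. Since $\operatorname{Aut}(\Gamma)$ is transitive on such pairs, this decomposition is canonical, with connected parts, adhesion $1$, and quotient a single edge. Yet every vertex lies in uncountably many bags. Your $\Lambda$ then has vertices of infinite degree, and your ``bounded chain of adjacent bags'' in the Lipschitz estimate is unavailable. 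Pendant paths cannot repair this, since a bounded-degree gadget within bounded distance of a vertex offers only boundedly many attachment points.

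The missing step is the paper's reduction. First equivariantly contract every edge of $T$ that has an empty strict side. For infinite $\Gamma$, each contracted class has a node whose bag contains all the others, so every new part is an old part. Now let $D$ bound the diameters of adhesion sets in $\Gamma$. Every adhesion set on an edge of $T_v:=\{t:v\in\beta(t)\}$ lies in $B:=B_\Gamma(v,D)$. Pick a finite $F\supseteq B$ meeting every component of $\Gamma-B$, and a bag $t_u\ni u$ for each $u\in F$. For an edge of $T_v$, the adhesion set lies in $B$, so each of its (now nonempty) strict sides contains a point of $F$. Hence that edge lies in the finite convex hull of the $t_u$, and quasi-transitivity bounds $|T_v|$ uniformly. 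After this you can attach every bridge at $f_t(x_{st})$ directly, since $|f_t^{-1}(z)|\le\kappa$.

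The forward quasi-planar implication is not harder and needs no outside result. You have already shown that $P_t\hookrightarrow\Gamma$ is a quasi-isometric embedding. Composing with a quasi-isometry from $\Gamma$ to a planar graph $\Lambda$ gives a quasi-isometric embedding $P_t\to\Lambda$. Lemma~\ref{lem:bounded-degree-qi-image} then yields a connected bounded-degree subgraph of $\Lambda$, planar as a subgraph of a planar graph, that is quasi-isometric to $P_t$. Your worry about subgraphs applies only to distorted ones.

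For almost planarity even less is needed: the inclusion is injective and $1$-Lipschitz, hence regular. The uniform bound on adhesion diameters comes directly from finiteness of the edge orbits of $T$, not from multiplicity counts.

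Your remaining steps are fine and at the paper's level of detail, which also leaves these estimates as routine. That covers bounded-to-one, and the lower bound obtained by following bridges along the $T$-path.
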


\begin{proof}
Write $\Gamma_t:=\Gamma[\beta(t)]$. Finiteness of the quotient gives finitely many part types and a uniform bound on the intrinsic diameters of adhesion sets in their incident parts, and the parts are uniformly quasi-isometrically embedded in $\Gamma$. The forward implications follow from Lemma~\ref{lem:bounded-degree-qi-image} and the fact that almost planarity passes to subgraphs.

For the converse, we assume without loss of generality that no induced separation of the tree-decomposition has an empty strict side, equivariantly contracting redundant edges if necessary.  
Note that each vertex $v$ now belongs to uniformly boundedly many bags. Indeed, let $D$ bound the diameters of the adhesion sets in $\Gamma$. Let $T_v$ be the subtree of bags containing $v$. Every adhesion set corresponding to an edge of $T_v$ lies in the finite ball $B:=B_\Gamma(v,D)$. Choose a finite set $F$ containing $B$ and meeting every component of $\Gamma-B$. For each $u \in F$, choose $t_u \in V(T)$ such that $\beta(t_u) \ni u$. Let $T' \subset T$ denote the convex hull of the $t_u$, which is a finite subtree. Every edge of $T_v$ separates two of these chosen bags, since both strict sides contain a vertex of $F$. Thus $T_v \subset T'$ and so $T_v$ is finite, and quasi-transitivity gives a uniform bound independent of $v$.

Choose regular maps $f_t:\Gamma_t\to \Pi_t$ to connected bounded-degree planar graphs, choosing quasi-isometries in the quasi-planar case. Since there are only finitely many part types up to isomorphism, all constants and degree bounds can be chosen uniformly. Consider the disjoint union of the $\Pi_t$. For each edge $st\in E(T)$, choose $x_{st}\in\beta(s)\cap\beta(t)$ and join $f_s(x_{st})$ to $f_t(x_{st})$ by an edge. The resulting graph $\Pi$ is planar, since every joining edge is a bridge, and the uniform bounds on adhesion diameters and bag multiplicities in our tree-decomposition imply that $\Pi$ has bounded degree.
For each $v\in V(\Gamma)$, choose a bag $\beta(t(v))$ containing $v$, and define
$$
f(v):=f_{t(v)}(v).
$$
It is easy to verify that $f$ is regular, and if the $f_t$ are (uniform) quasi-isometries, then so is $f$. We leave these routine estimates to the reader; compare \cite[Sec.~3]{Esperet_Giocanti_2024}. Theorem~\ref{thm:almost-planar-regular} gives almost planarity in the regular case, and we are done.
\end{proof}

We are now able to deduce Theorem~\ref{thm:reg-ctble} of the introduction, which we restate below for the convenience of the reader. 

\regctble*

\begin{proof}
Choose a canonical tree-decomposition of $\Gamma$ with bounded adhesion, connected finite or one-ended parts,  and finite quotient.
The given regular map restricts to a regular map on each part, so every one-ended part is almost planar by Theorem~\ref{thm:one-ended-rigidity}.
Finite parts are trivially almost planar as well, and so Lemma~\ref{lem:planar-assembly} gives the desired conclusion.
\end{proof}

\section{Localising inaccessible parts}\label{sec:accessibility}

The goal of this section is to prove Theorem~\ref{thm:main} of the introduction. The main hurdle to overcome is that we do not yet know our input graph is accessible. The rough idea is that if it were inaccessible, then we could apply the localisation trick of Proposition~\ref{prop:end-localisation} to find a quasi-transitive subgraph which is both inaccessible and quasi-planar. This then contradicts the quasi-planar accessibility theorem (\ref{thm:quasi-planar-accessible}). 

We proceed in several steps.

\begin{lemma}\label{lem:ultra-thick-in-part}
Let $\Gamma$ be a connected, locally finite, quasi-transitive graph with a canonical tree-decomposition $(T,\beta)$ of bounded adhesion.
Suppose that $T/\operatorname{Aut}(\Gamma)$ is finite and that every part is connected and quasi-transitive under its node stabiliser.
If an ultra-thick end $\omega$ is carried by $t\in V(T)$, then it is represented by an ultra-thick end of $P:=\Gamma[\beta(t)]$.
\end{lemma}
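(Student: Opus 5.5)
We need three things: an end $\hat\omega$ of $P$ corresponding to $\omega$; for each $n$, ends of $P$ far from $\hat\omega$ in the vertex-separation sense; and a transfer of separations from $P$ back to $\Gamma$.

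\emph{Step 1: the end $\hat\omega$.} Every end $\xi$ of $\Gamma$ carried by $t$ determines an end $\hat\xi$ of $P$. Since $\xi$ is carried by $t$, for every finite $S\subseteq V(\Gamma)$ the component of $\Gamma-S$ containing $\xi$ meets $\beta(t)$ in infinitely many vertices. Otherwise $\xi$ would live in finitely many strict sides beyond adhesion sets at $t$, and would orient one of those edges away from $t$. Fix a ray in $\xi$ and consider its visits to $\beta(t)$. Each excursion outside $\beta(t)$ enters and leaves a strict side through an adhesion set of size at most $a$. Since parts are connected, these adhesion sets have bounded diameter in $P$, by finiteness of the quotient. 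So we may splice in a short path within $P$, obtaining a ray in $P$. By Proposition~\ref{prop:highly-connected-parts}(4)-type reasoning, the inclusion $P\hookrightarrow\Gamma$ is a quasi-isometric embedding. I would verify this directly: the bounded adhesion diameters let us reroute any $\Gamma$-geodesic between vertices of $\beta(t)$ inside $P$ with bounded multiplicative loss. Hence the inclusion induces an injection $\Omega(P)\to\Omega(\Gamma)$, and the end of the spliced ray maps to $\xi$. Set $\hat\omega$ to be the preimage of $\omega$.

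\emph{Step 2: separations in $P$ versus $\Gamma$.} The key estimate is as follows. If $S\subseteq\beta(t)$ separates $\hat\xi$ from $\hat\omega$ in $P$, then $S':=S\cup\bigcup\{\beta(t)\cap\beta(s): s\sim t,\ \beta(t)\cap\beta(s)\cap N_P(S,D)\neq\emptyset\}$, or more simply $S'=B_\Gamma(S,D')$ for suitable $D'$, separates $\xi$ from $\omega$ in $\Gamma$. Any $\Gamma$-path avoiding $B_\Gamma(S,D')$ can be projected to a $P$-path avoiding $S$, again by the bounded-diameter adhesion rerouting. By bounded degree, $|S'|\le c|S|$. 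Contrapositively, $\operatorname{vs}_P(\hat\omega,\hat\xi)\ge \operatorname{vs}_\Gamma(\omega,\xi)/c$. This is the same mechanism as Lemma~\ref{lem:regular-end-separators}, applied to the inclusion $P\hookrightarrow\Gamma$, which is regular.

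\emph{Step 3: producing far ends inside $P$.} This is the main obstacle. Given $n$, ultra-thickness gives ends $\xi\ne\omega$ with $\operatorname{vs}(\omega,\xi)>cn$, but $\xi$ need not be carried by $t$. If it is not, orient towards $\xi$: it lies beyond some edge $ts$, so the adhesion set $\beta(t)\cap\beta(s)$ of size at most $a$ separates $\xi$ from $\omega$. For $cn\ge a$ this is impossible, so $\xi$ lies in the $t$-side of every edge at $t$ and is carried by $t$. The one subtlety is that $\xi$ might not be carried by any node, orienting $T$ along a ray starting at $t$. The first edge of that ray already gives the size-$a$ separator, so this case is excluded too. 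Hence for $n$ large every such $\xi$ is carried by $t$, giving $\hat\xi\ne\hat\omega$ with $\operatorname{vs}_P(\hat\omega,\hat\xi)>n$. Therefore $\hat\omega$ is ultra-thick in $P$.
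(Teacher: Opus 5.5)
Your argument is correct and is essentially the paper's proof. You splice excursions through incident adhesion sets of bounded $P$-diameter to obtain $\hat\omega$, push a separator $S$ of $P$ forward to the bounded-size separator $B(S,D)$ in $\Gamma$, and observe that any end not carried by $t$ is cut from $\omega$ by an incident adhesion set of size at most $a$; the only difference is that you argue directly where the paper argues by contradiction.

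Two side remarks are wrong, though neither carries any weight in the proof. First, a quasi-isometric embedding need not be injective on ends (consider a bi-infinite geodesic in $\mathbb{Z}^2$), so injectivity of $\Omega(P)\to\Omega(\Gamma)$ really comes from your Step~2 transfer. Second, Lemma~\ref{lem:regular-end-separators} pulls separators back from target to source, which is the opposite direction to what Step~2 needs, so it is not the mechanism at work there.
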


\begin{proof}
Let $a \geq 0$ bound the adhesion of $(T,\beta)$. Finiteness of $T/\operatorname{Aut}(\Gamma)$ gives a constant $D\geq1$ bounding the intrinsic diameters in $P$ of the adhesion sets incident to $t$. Every excursion outside $P$ has its endpoints in a common incident adhesion set, so it can be replaced by a path in $P$ of length at most $D$.

Every ray representing an end carried by $t$ meets $P$ infinitely often. Replacing its excursions gives a proper walk in $P$ representing the same end, and deleting loops gives a ray. In particular, choose an end $\omega_P$ of $P$ representing $\omega$. If a finite set $S\subseteq V(P)$ separates two ends of $P$, then $B_P(S,D)$ separates their images in $\Gamma$, since a path avoiding this neighbourhood could have all its excursions replaced by paths avoiding $S$.

Suppose that $\omega_P$ is not ultra-thick, and choose $k$ such that it can be separated from every other end of $P$ by at most $k$ vertices. The preceding observation shows that $\omega$ can be separated from every other end of $\Gamma$ carried by $t$ by at most $k v_P(D)$ vertices. Every end not carried by $t$ is separated from $\omega$ by an incident adhesion set, of size at most $a$. Thus $\omega$ is not ultra-thick, a contradiction.
\end{proof}

\begin{lemma}\label{prop:inaccessible-almost-planar-part}
Let $\Gamma$ be an inaccessible, connected, locally finite, quasi-transitive graph, and let $f:\Gamma\to H$ be a regular map into a connected bounded-degree graph excluding a countable clique.
Then $\Gamma$ has a connected quasi-transitive subgraph $P$ which is inaccessible and almost planar.
If $f$ is a quasi-isometry, then $P$ is quasi-planar.
\end{lemma}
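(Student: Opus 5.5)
The plan follows the outline in the introduction: pass to a highly connected part of $\Gamma$ via Proposition~\ref{prop:highly-connected-parts}, and then use the separation threshold to force the image of that part to localise to a single Diestel--Thomas torso.

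First fix the target side. In the quasi-isometry case, replace $H$ by a bounded-degree subgraph using Lemma~\ref{lem:bounded-degree-qi-image}. Theorem~\ref{thm:diestel-thomas} then gives a finite-adhesion tree-decomposition $(T_H,\beta_H)$ of $H$ whose torsos are plane graphs with at most one vortex. By \cite[Thm.~5.4]{thomassen1993vertex}, $\Gamma$ has an ultra-thick end $\omega$. Its image $f_*(\omega)$ is thick by Lemma~\ref{lem:regular-map-on-ends}, so it is carried by some node $s\in V(T_H)$ by Lemma~\ref{lem:thick-end-carried}. The torso at $s$ excludes some $K_N$ by Proposition~\ref{prop:near-embeddings-exclude-minors}, and its incident adhesion sets are cliques in it. Hence they all have size less than $N$, and we set $a:=N$. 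Lemma~\ref{lem:regular-end-separators} gives $b=b(a)$, and we fix an integer $n$ with $n>\Delta b$, where $\Delta$ is the degree bound of $\Gamma$. We then apply Proposition~\ref{prop:highly-connected-parts} with this $n$ to get a canonical decomposition $(T,\beta)$.

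Next choose the part $P$. By Lemma~\ref{lem:thick-end-carried}, $\omega$ is carried by a node $t$ of $T$. Lemma~\ref{lem:ultra-thick-in-part} then shows that $P:=\Gamma[\beta(t)]$ has an ultra-thick end $\omega_P$ representing $\omega$, so $P$ is inaccessible. $P$ is connected and quasi-transitive by Proposition~\ref{prop:highly-connected-parts}. Moreover $\operatorname{vs}(P)\geq \operatorname{es}(P)/\Delta\geq n/\Delta>b$, so distinct ends of $P$ need more than $b$ vertices to separate them in $P$. We need this in $\Gamma$ as well. Since the inclusion is a quasi-isometric embedding, we can either work with $g:=f|_P$, which is regular and even a quasi-isometric embedding when $f$ is one, or compare separators as in Lemma~\ref{lem:ultra-thick-in-part}. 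Applying Lemma~\ref{lem:regular-end-separators} to $g:P\to H$ shows that any two ends of $P$ with distinct images in $H$ have images at vertex-separation more than $a$.

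The main step, and the one I expect to be the obstacle, is showing that every end in $g_*(\Omega(P))$ is carried by $s$. Take an end $\xi$ of $P$. If $g_*(\xi)=g_*(\omega_P)$, it is carried by $s$. Otherwise $g_*(\xi)$ and $g_*(\omega_P)$ cannot be separated by fewer than $a+1$ vertices. Suppose $g_*(\xi)$ were not carried by $s$. Then it orients some edge at $s$ away from $s$, while $g_*(\omega_P)$, being carried by $s$, orients that edge towards $s$. The corresponding adhesion set, of size less than $N=a$, would therefore separate them, a contradiction. One subtlety needs care: the carried end for $\omega$ in $H$ must coincide with $g_*(\omega_P)$. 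This should follow because the inclusion $P\hookrightarrow\Gamma$ induces the injection on ends from Proposition~\ref{prop:highly-connected-parts}(4) and $g=f\circ\iota$.

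With this in hand, Proposition~\ref{prop:end-localisation} applies to $g:P\to H$, node $s$, bounded incident adhesion, and a torso that near-embeds in a sphere with holes. This makes $P$ almost planar. In the quasi-isometry case $g$ is a quasi-isometric embedding, so the same proposition makes $P$ quasi-planar.
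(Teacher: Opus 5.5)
Your proof is correct and follows essentially the same route as the paper: localise $f_*(\omega)$ to a Diestel--Thomas node, take the highly connected part $P$ carrying $\omega$ with threshold $n>\Delta b$, show that every end of $P$ maps to an end carried by that node, and apply Proposition~\ref{prop:end-localisation}. The only difference is that you pull separators back directly along $f|_P$, whereas the paper pulls them back along $f$ into $\Gamma$ and then intersects with $V(P)$. Your version is fine, but you should state that the constant $b=\kappa v_H(\kappa)a$ from the proof of Lemma~\ref{lem:regular-end-separators} depends only on $\kappa$ and $H$, and that $f|_P$ is still $\kappa$-regular, so the $b$ fixed before choosing $n$ (and hence $P$) really does apply to $f|_P$.
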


\begin{proof}
Choose an ultra-thick end $\omega\in\Omega(\Gamma)$.
By Lemma~\ref{lem:regular-map-on-ends}, the image end $\bar\omega:=f_*(\omega)$ is thick.
Theorem~\ref{thm:diestel-thomas} gives a tree-decomposition $(T,\beta)$ of $H$ of finite adhesion in which every torso is a plane graph with at most one vortex.
By Lemma~\ref{lem:thick-end-carried}, $\bar\omega$ is carried by some $t\in V(T)$.
The torso at $t$ excludes a finite clique $K_r$ as a minor by Proposition~\ref{prop:near-embeddings-exclude-minors}.
Since each adhesion set incident to $t$ is a clique in that torso, all such adhesion sets have cardinality less than $r$.
Every end of $H$ not carried by $t$ can therefore be separated from $\bar\omega$ by fewer than $r$ vertices, using an adhesion set incident to $t$.
Lemma~\ref{lem:regular-end-separators} supplies $b<\infty$ such that, for every $\xi\in\Omega(\Gamma)$,
\begin{equation}\label{eq:ends-outside-target-torso}
f_*(\xi)\text{ is not carried by }t\quad\Longrightarrow\quad\operatorname{vs}(\omega,\xi)\leq b.
\end{equation}

Write $\Delta:=\Delta(\Gamma)$ and choose an integer $n>\Delta b$.
Apply Proposition~\ref{prop:highly-connected-parts} with this value of $n$ to obtain a canonical tree-decomposition $(S,\alpha)$ of $\Gamma$.
By Lemma~\ref{lem:thick-end-carried}, the end $\omega$ is carried by some $u\in V(S)$.
Put $P:=\Gamma[\alpha(u)]$.
Lemma~\ref{lem:ultra-thick-in-part} gives an ultra-thick end $\omega_P$ of $P$ representing $\omega$.
Thus $P$ is inaccessible, and in particular is infinite and not two-ended.
By the properties of the decomposition,
\begin{equation}\label{eq:high-connectivity-part}
\operatorname{vs}(P)\geq\Delta^{-1}\operatorname{es}(P)\geq n/\Delta>b.
\end{equation}
Every end of $P$, viewed as an end of $\Gamma$, maps to an end of $H$ carried by $t$.
For otherwise, \eqref{eq:ends-outside-target-torso} would give a set $A\subseteq V(\Gamma)$ with $|A|\leq b$ separating that end from $\omega$.
Its intersection with $V(P)$ would separate the corresponding end of $P$ from $\omega_P$, contrary to \eqref{eq:high-connectivity-part}.

Proposition~\ref{prop:end-localisation}, applied to $f|_P:P\to H$ and the node $t$, now makes $P$ almost planar.
If $f$ is a quasi-isometry, its restriction $f|_P$ is a quasi-isometric embedding by Proposition~\ref{prop:highly-connected-parts}, so Proposition~\ref{prop:end-localisation} makes $P$ quasi-planar.
\end{proof}

We now have all we need to complete the proof of Theorem~\ref{thm:main} of the introduction, which we restate below for convenience. 

\main*

\begin{proof}
By Lemma~\ref{lem:bounded-degree-qi-image}, there is a connected bounded-degree graph $H$ excluding a countable clique and a quasi-isometry $f:\Gamma\to H$.
If $\Gamma$ were inaccessible, Lemma~\ref{prop:inaccessible-almost-planar-part} would give an inaccessible quasi-transitive part quasi-isometric to a planar graph, contrary to Theorem~\ref{thm:quasi-planar-accessible}.
Thus $\Gamma$ is accessible, and we may choose a canonical tree-decomposition with bounded adhesion, connected finite or one-ended parts, and finite quotient.
Every part $P$ is quasi-isometrically embedded in $\Gamma$, so Lemma~\ref{lem:bounded-degree-qi-image}, applied to $f|_P$, gives a connected subgraph of $H$ quasi-isometric to $P$.
Theorem~\ref{thm:one-ended-rigidity} therefore makes every one-ended part quasi-planar. Finite parts are trivially quasi-planar as well, and so Lemma~\ref{lem:planar-assembly} completes the proof.
\end{proof}

\section{Excluding a finite clique}\label{sec:finite-clique}

We conclude this paper by proving Theorem~\ref{thm:reg-finite} of the introduction. 

\regfinite*

\begin{proof}
Let $f:\Gamma\to H$ be the given map, and choose $\kappa$ such that $f$ is $\kappa$-regular.
Choose $N\geq5$ such that $K_N\nprec H$.
By Theorem~\ref{thm:finite-clique-structure}, there is a tree-decomposition $(T,\beta)$ of $H$ whose adhesion is bounded by an integer $a\geq1$ and whose torsos admit near-embeddings in compact surfaces from a fixed finite collection.
Put $\Delta:=\max\{1,\Delta(\Gamma)\}$ and $b:=\kappa v_H(\kappa)a$, and choose an integer $n>\Delta b$.
Apply Proposition~\ref{prop:highly-connected-parts} to obtain a canonical tree-decomposition $(S,\alpha)$ of $\Gamma$ with connected quasi-transitive parts, bounded adhesion, and finite quotient such that every part $P:=\Gamma[\alpha(u)]$ satisfies
\begin{equation}\label{eq:finite-clique-source-threshold}
\operatorname{vs}(P)\geq\Delta^{-1}\operatorname{es}(P)\geq n/\Delta>b.
\end{equation}
We show that every such part is almost planar.

If $P$ has no thick end, Theorem~\ref{lem:thin-ended-almost-planar} applies.
Otherwise, choose a thick end $\omega\in\Omega(P)$.
The restriction $f_P:P\to H$ is $\kappa$-regular, because inclusion $P\hookrightarrow\Gamma$ is injective and $1$-Lipschitz.
By Lemmas~\ref{lem:regular-map-on-ends} and~\ref{lem:thick-end-carried}, the image end $(f_P)_*(\omega)$ is thick and is carried by a node $t\in V(T)$.
Every end of $P$ maps to an end carried by $t$.
Indeed, if the image of an end $\xi\in\Omega(P)$ were not carried by $t$, an adhesion set $A$ on an edge incident to $t$ would separate its image from $(f_P)_*(\omega)$.
The pullback $f_P^{-1}(B_H(A,\kappa))$ would separate $\xi$ from $\omega$ and have cardinality at most $\kappa v_H(\kappa)|A|\leq b$, contradicting \eqref{eq:finite-clique-source-threshold}.

The torso at $t$ admits a near-embedding in a compact surface and its incident adhesion sets have size at most $a$.
Proposition~\ref{prop:end-localisation}, applied to $f_P$ and $t$, therefore makes $P$ almost planar.
All parts of $(S,\alpha)$ are now almost planar.
Lemma~\ref{lem:planar-assembly} implies that $\Gamma$ is almost planar, as required.
\end{proof}

\bibliographystyle{abbrv}
\bibliography{references}

\end{document}